%% file: main.tex
\documentclass{article}
\usepackage{graphicx} %
\usepackage{physics}
\usepackage{appendix}
\usepackage{amsmath}
\usepackage{tikz}
\usepackage{mathdots}
\usepackage{mathtools}
\usepackage{yhmath}
\usepackage{cancel}
\usepackage{xcolor}
\usepackage{siunitx}
\usepackage{array}
\usepackage{subcaption}
\usepackage{multirow}
\usepackage{caption}
\usepackage{amssymb}
\usepackage{hyperref}
\usepackage{cleveref}
\usepackage{gensymb}
\usepackage{tabularx}
\usepackage{extarrows}
\usepackage{booktabs}
\usepackage{tikz}
\usepackage{float}
\usetikzlibrary{fadings}
\usetikzlibrary{patterns}
\usetikzlibrary{shadows.blur}
\usetikzlibrary{shapes}

\usepackage{natbib}
\usepackage{cleveref}

\usepackage{amsthm}
\usepackage{amsfonts}
\usepackage{amsmath}
\usepackage{enumitem}

\newtheorem{theorem}{Theorem}
\newtheorem{assum}{Assumption}
\crefname{assum}{assumption}{assumptions}
\newtheorem{lemma}{Lemma}

\newtheorem{definition}{Definition}[section]
\newtheorem{prop}{Proposition}
\crefname{prop}{proposition}{propositions}
\newtheorem{rem}{Remark}
\newtheorem{example}{Example}
\newcommand{\R}{\mathbb{R}}
\newcommand{\cC}{\mathcal{C}}
\newcommand{\N}{\mathbb{N}}

\newcommand{\B}{\mathbb{B}}

\newcommand{\locinnlip}{locally inner Lipschitz continuous}
\newcommand{\Locinnlip}{Local Inner Lipschitz Continuity}

\newcommand{\lilc}{locally inner Lipschitz continuous}

\usepackage{authblk}

\DeclareMathOperator{\BR}{BR}
\DeclareMathOperator{\sgn}{sgn}
\newcommand{\Gr}{\text{Gr}}

\title{On the Connectedness of Sublevel Sets in Invex Optimization}
\author[1,2]{Vinzenz Thoma}
\author[1]{Zebang Shen}
\author[1]{Niao He}
\affil[1]{{Department of Computer Science, ETH Zurich}}
\affil[2]{ETH AI Center}
\affil [ ]{\small{\ttfamily\{vinzenz.thoma, zebang.shen, niao.he\}@inf.ethz.ch}}
\date{}
\tikzset{every picture/.style={line width=0.75pt}} %

\begin{document}

\maketitle

\begin{abstract}
    Understanding the topology of sublevel sets yields crucial insights into the optimization landscape of non-convex functions. If sublevel sets are connected, local search algorithms are less likely to be trapped in isolated valleys, facilitating convergence to global minimizers.
    However, few results exist to establish connectedness in the nonconvex setting. In this work, we present a mathematical toolkit based on the topological mountain pass theorem and use it to study invex functions, a class of functions that includes those satisfying the Polyak-\L{}ojasiewicz inequality and generalizations thereof. We show that their sublevel sets are connected under mild assumptions. We further leverage our result to establish the connectedness of different solution sets for invex-incave minimax problems and incave games.
\end{abstract}

\pagebreak

\pagebreak

\input{chapters/intro}

\input{chapters/related}

\input{chapters/sublevel}

\input{chapters/games}

\input{chapters/conclusion}

\newpage

\bibliography{refs.bib}

\newpage 
\appendix

\include{chapters/appendix}

\end{document}

%% file: chapters/intro.tex
\section{Introduction}
\label{sec:intro}

The geometry of the objective function plays a central role for the success of gradient-based optimization algorithms. A property of particular interest is the \emph{connectedness of sublevel sets}: if every sublevel set $\{x : f(x) \leq c\}$ is connected, there are no ``valleys'' separated by barriers, and gradient-based methods can, in principle, reach global minimizers from any initialization.
This has motivated a growing interest in the topology of nonconvex functions' sublevel sets, especially in the context of overparameterized neural networks \citep{Nguyen2019Connected,Nesterov2022Learning,Zeng2023Connected,Garipov2018Loss,Cooper2021Global,Draxler2018Essentially,Liu2022Loss}.

However, establishing connectedness of sublevel sets in the nonconvex setting is a challenging task. Existing approaches rely on problem-specific, ad-hoc constructions, either by finding a continuous path connecting any two points in the sublevel set \citep{Zeng2023Connected} or by constructing a continuous, surjective function from a convex domain onto the sublevel set \citep{Fatkhullin2021Optimizing}. 

An important class of nonconvex functions are \emph{invex} functions---for which every stationary point is a global minimizer. They thus generalize both convex functions and functions satisfying the famous Polyak-\L{}ojasiewicz (PL) inequality and generalizations thereof~\citep{Polyak1963Gradient,Lojasiewicz1963Une}. Moreover, they capture relevant real-world problems such as machine learning machine learning~\citep{Nishioka2025Revisiting} and reinforcement learning.\footnote{It has been shown that stationary points correspond to global optimal solutions for RL problems with softmax parameterized policies \citep{Mei2020Global} and even for RL problems with general utilities \citep{Zhang2020Variationala}.} 
Invexity has been widely studied in the optimization literature precisely because it guarantees the absence of spurious stationary points.
Yet, surprisingly, the topological structure of the sublevel sets of invex functions remains poorly understood. It is known that sublevel sets of invex functions are lower semi-continuous~\citep{Zang1977functions}. However, without further assumptions the sublevel sets can be disconnected (cf. \Cref{fig:invex}). Identifying further conditions under which the sublevel sets of invex functions are connected is an open question, repeatedly discussed in the literature~\citep{Pini1991Invexity,Pini1994Convexity,Sapkota2021Input,Barik2023Invex}.

In this work, we address this gap by introducing a novel approach for establishing connected sublevel sets, based on the \textit{topological mountain pass theorem}, and use it to show connectedness of the sublevel sets of a broad class of invex functions. These include functions satisfying the Polyak-\L{}ojasiewicz inequality and generalizations thereof.
We further leverage our result to study the connectedness of different solution concepts in invex games and minimax optimization.

%% file: chapters/related.tex
\paragraph{Related Work}

Recently, \citet{Criscitiello2025} showed that the set of minima of a coercive, continuously differentiable function satisfying the global Polyak-\L{}ojasiewicz condition is contractible and therefore connected, which relates to our \Cref{prop:PLconnect} but makes stronger assumptions on the differentiability. During the writing of this paper, we also became aware of the works of \citet{Pucci1984Extensions} and \citet{Silva1998version}, who previously showed that connectedness of the set of minima under certain assumptions on the stationary points of a function can be established via Mountain Pass Theorems. More specifically, \citet{Pucci1984Extensions} showed that if every stationary point of a continuously differentiable function that fulfills the Palais-Smale compactness condition is a local minimum, then every stationary point is a global minimum and the set of minima is connected. Under the same assumptions, \citet{Silva1998version} showed that if $f$ has a stationary point with value $c$, then either the corresponding level set is connected, or there is another stationary point with value $d\neq c$. These works are thus closely related to our \Cref{thm:main}.

%% file: chapters/sublevel.tex
\section{Connected Sublevel Sets of Invex Functions}
\label{sec:sublevel}
We focus on the class of invex functions \citep{Hanson1981sufficiency}, defined by the property that every stationary point is a global minimum. 

\begin{definition}[Invexity]
    We call a differentiable function $f: \R^n \to \mathbb{R}$ \textit{invex} if every stationary point of $f$ is a global minimum. Equivalently, $f$ is invex if there exists a function $\eta: \R^n \times \R^n \to \mathbb{R}^n$ such that for all $x, y\in \R^n$, the following condition holds:
\[ f({y}) \geq f({x}) + \nabla f({x})^T \eta({x}, {y}). \]
\end{definition}

We call a function $f$ \textit{incave}, if $-f$ is invex. Note
invex (resp. incave) functions sharing the same $\eta$ form a convex cone and are closed under composition with monotone increasing convex, (resp. monotone decreasing concave functions) \citep{Mishra2008Invexity}.

While invexity guarantees the absence of spurious local minima, it does not automatically guarantee that the region of global minima is connected (cf. \Cref{fig:invex}). In this section, we identify conditions under which invexity implies the topological connectedness of sublevel sets $L^{-}_c:=\{x\in \R^n| f(x)\leq c\}$.

In the one-dimensional case the connectedness of $L^{-}_c$ is a direct consequence of Rolle's theorem.\footnote{See for example~\citep[Lemma 2.26]{Mishra2008Invexity}.} However, in higher dimensions the sublevel sets of invex functions are not necessarily connected. Consider the counterexample in \Cref{fig:invex}.

    \begin{figure}
        \centering
        \includegraphics[width=0.6\textwidth,,trim=0 0.9cm 0 2.6cm,clip]{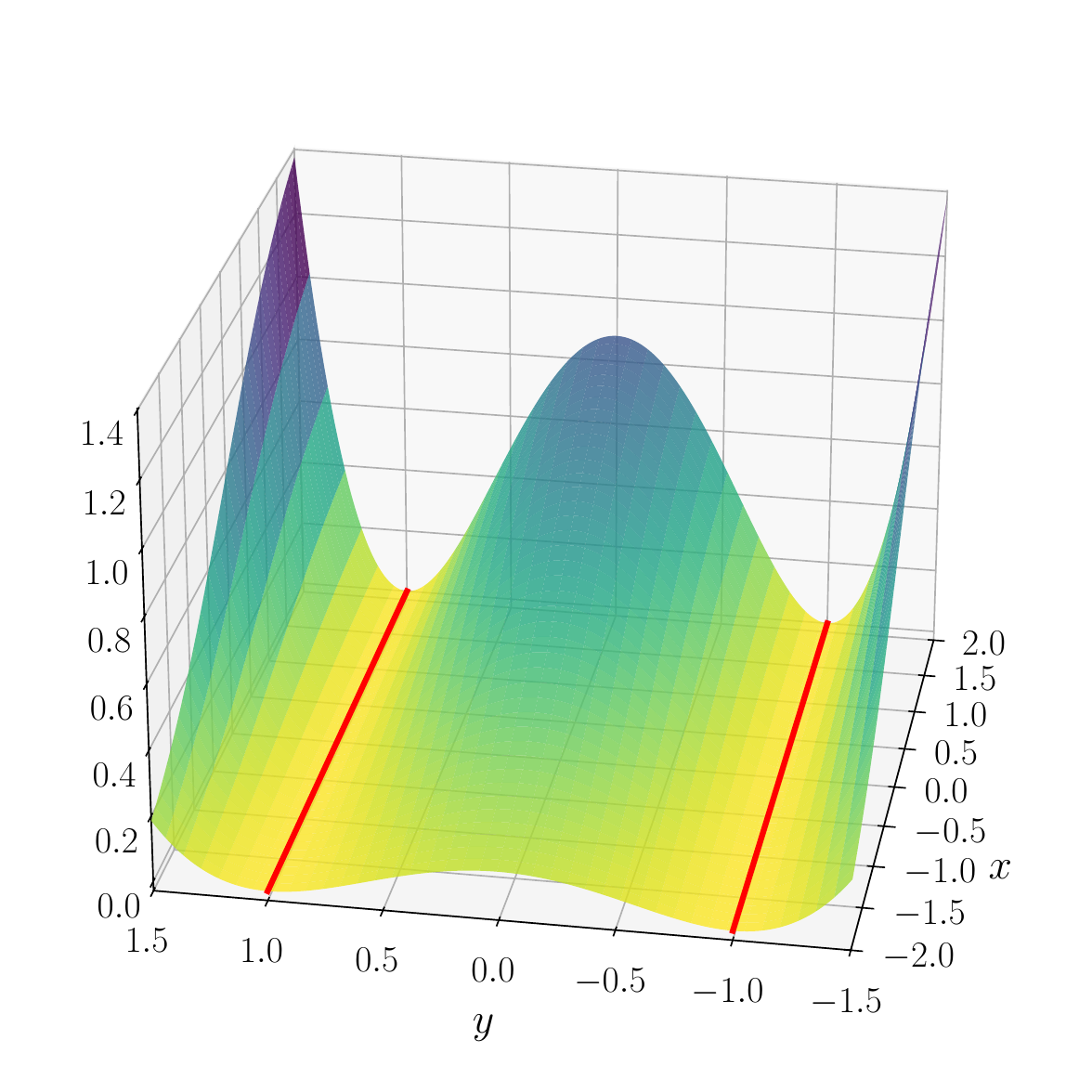}
        \caption{Invexity alone does not imply connected sublevel sets. The function \( f(x,y) = (1/(1+\exp(-x)))(y^2 - 1)^2 \) is invex. However, the set of minima \( \R \times \{-1,1\} \), indicated by the red lines, is not connected.}
        \label{fig:invex}
    \end{figure}

Indeed, to extend the intuition of Rolle's theorem to higher dimensions, we additionally need to assume that the function $f$ is \textit{increasing at infinity}, i.e. for all $x$, there exists a compact set $K$ such that $\forall z \in K^c: f(z)>f(x)$.\footnote{Note that this assumption is weaker than \emph{coercivity}, which requires the function to diverge to infinity. We allow the function to approach a finite asymptote, which allows for a wider range of bounded objectives.} For functions satisfying this property, we can employ the following result by \citet{Katriel1994Mountain}, also referred to as the \emph{topological Mountain Pass Theorem} (MPT).

\begin{theorem}[Topological MPT, \citet{Katriel1994Mountain}]
    \label{thm:katriel}
        Let $f: \R^n \rightarrow \R$ be a continuously differentiable function, increasing at infinity. Let $x_0, x_1 \in \R^n$ and let $\mathrm{S} \subset \R^n$ separate $x_0$ and $x_1$ (that is, $x_0$ and $x_1$ lie in different components of $\R^n\setminus\mathrm{S}$ ), and:
    \[
    \max \left\{f\left(x_0\right), f\left(x_1\right)\right\}<\inf _{x \in \mathrm{S}} f(x)=p.
    \]
    Then there exists a stationary point $x_2$ of $f$ with $f\left(x_2\right)>\max \left\{f\left(x_0\right), f\left(x_1\right)\right\}$.
    \end{theorem}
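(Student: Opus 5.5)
The plan is the classical minimax/deformation argument, with "increasing at infinity" standing in for the Palais--Smale condition. Set $m:=\max\{f(x_0),f(x_1)\}$ and let $\Gamma$ be the set of continuous paths $\gamma:[0,1]\to\R^n$ with $\gamma(0)=x_0$ and $\gamma(1)=x_1$. Define the minimax value
\[
c:=\inf_{\gamma\in\Gamma}\max_{t\in[0,1]} f(\gamma(t)).
\]
Since $x_0$ and $x_1$ lie in different components of $\R^n\setminus \mathrm{S}$, every $\gamma\in\Gamma$ meets $\mathrm{S}$. Hence $\max f\circ\gamma\ge p$, so $c\ge p>m$. It therefore suffices to show that $c$ is a critical value. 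That is, we must find $x_2$ with $\nabla f(x_2)=0$ and $f(x_2)=c$; then $f(x_2)=c>m$ as claimed.

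\emph{Step 1 (localizing to a compact set).} Fix a path $\gamma_0\in\Gamma$, say the segment, and let $M:=\max f\circ\gamma_0$, attained at some $\bar x=\gamma_0(\bar t)$. Apply the increasing-at-infinity assumption to $\bar x$. This gives a compact $K$ with $f(z)>M$ for all $z\notin K$. Thus the sublevel set $\{f\le M\}$ is contained in $K$. Moreover, it suffices to consider paths with $\max f\circ\gamma\le M$, and every such path lies in $K$. Since $f$ is $C^1$ and $K$ is compact, $\nabla f$ is uniformly continuous on a compact neighbourhood of $K$. This compactness is the substitute for a Palais--Smale condition.

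\emph{Step 2 (no critical point gives a gradient gap).} Suppose, for contradiction, that $f$ has no stationary point on the level set $\{f=c\}$, which lies in $K$ because $c\le M$. A compactness argument then yields $\varepsilon>0$ and $\delta>0$ with $\varepsilon<(c-m)/2$ such that
\[
|\nabla f|\ge\delta \quad\text{on } N:=\{c-2\varepsilon\le f\le c+2\varepsilon\}\cap K.
\]
If not, there would be points $z_k\in K$ with $f(z_k)\to c$ and $\nabla f(z_k)\to0$, and a limit point of $(z_k)$ would be a critical point at level $c$.

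\emph{Step 3 (deformation).} This is the technical heart and the main obstacle, because $f$ is only $C^1$, so $-\nabla f$ need not generate a flow. I will first mollify $\nabla f$ on a neighbourhood of $K$ to obtain a smooth field $v$ with $|v-\nabla f|<\delta/2$ there. Then on $N$ we have $\nabla f\cdot v\ge \delta^2/2$, and $|v|$ is bounded. Next I multiply $v$ by a smooth cutoff $\chi$ with $\chi\equiv1$ on $\{c-\varepsilon\le f\le c+\varepsilon\}\cap K$ and $\chi\equiv 0$ off $N$, and consider the flow $\phi_s$ of $-\chi v$. The function $f$ is nonincreasing along this flow. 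Consequently $\{f\le M\}\subset K$ is invariant, and the flow exists for all $s\ge 0$.

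The endpoints $x_0,x_1$ are fixed by the flow, since $f(x_i)\le m<c-2\varepsilon$ places them outside $N$. Now choose $\gamma\in\Gamma$ with $\max f\circ\gamma\le c+\varepsilon$. While a point of $\gamma$ remains in $\{c-\varepsilon\le f\le c+\varepsilon\}$, its $f$-value decreases at rate at least $\delta^2/2$. Hence after time $s=4\varepsilon/\delta^2$ the deformed path $\phi_s\circ\gamma\in\Gamma$ satisfies $\max f\circ(\phi_s\circ\gamma)\le c-\varepsilon$. This contradicts the definition of $c$, so $c$ is a critical value and the desired $x_2$ exists.
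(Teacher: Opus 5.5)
\emph{Comparison with the paper.} Your proof is correct up to one fixable detail, and it necessarily takes a different route, because the paper gives no proof of this statement. It imports it directly from \citet{Katriel1994Mountain}, where it comes out of a mountain pass theory for continuous functions on metric spaces. There, critical points are understood in a topological sense that reduces to $\nabla f=0$ for $\mathcal{C}^1$ functions.

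Your argument follows the classical Ambrosetti--Rabinowitz scheme:
\begin{itemize}
\item a minimax value over paths;
\item a uniform gradient gap obtained by compactness;
\item a deformation along a mollified pseudo-gradient.
\end{itemize}
Here ``increasing at infinity'' supplies the compactness that a Palais--Smale condition would normally provide. The argument is self-contained and gives a sharper conclusion than the one stated: a stationary point at the minimax level $c\ge p$, so $f(x_2)\ge\inf_{\mathrm{S}}f$ rather than only $f(x_2)>\max\{f(x_0),f(x_1)\}$. The citation buys the paper brevity and independence from differentiability. Your route buys an elementary, transparent proof in exactly the $\mathcal{C}^1$ setting used in \Cref{thm:main}.

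\emph{The detail to repair.} Step~3 needs a continuous $\chi$ with $\chi\equiv1$ on $\{c-\varepsilon\le f\le c+\varepsilon\}\cap K$ and $\chi\equiv0$ off $N\subseteq K$. Such a $\chi$ cannot exist if the first set meets $\partial K$. Every point of $\partial K$ satisfies $f\ge M$, so the problem can only arise when $c=M$. Nothing in your setup rules that case out; for instance, $K=\{f\le M\}$ is an admissible choice, and then the maximizer of $f$ on the segment may lie on $\partial K$. The fix:
\begin{itemize}
\item Replace $K$ by a compact neighbourhood $K_1$ with $K\subseteq\mathrm{int}\,K_1$, for example $\{x: d(x,K)\le 1\}$.
\item Run the compactness argument of Step~2 on $K_1$; any limit point is still a critical point at level $c$.
\item Take $\chi$ supported in the open set $\{c-2\varepsilon<f<c+2\varepsilon\}\cap\mathrm{int}\,K_1$.
\item Choose $\gamma$ with $\max f\circ\gamma\le\min\{M,c+\varepsilon\}$. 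Then the path and its deformation stay in $\{f\le M\}\subseteq K$, where $\chi\equiv1$ on the relevant band. This choice is always possible: use the segment if $c+\varepsilon\ge M$, and the definition of $c$ otherwise.
\end{itemize}
With these adjustments, Step~3 goes through exactly as you wrote it.
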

Building upon this result, we can show that the sublevel sets $L^-_c$ of an invex function $f$ that is increasing at infinity are connected. Intuitively, this holds because if $L^-_c$ were disconnected, we could construct a separating set $S$, which by \Cref{thm:katriel} would yield the existence of another stationary point (the mountain pass) with value larger than $c$, a contradiction to the invexity of $f$.
\begin{theorem}
\label{thm:main}
   Let $f: \R^n \rightarrow \R $ be an invex function with minimum value $f^*$. Further assume that $f\in \cC^1(\R^n)$ and that $f$ is increasing at infinity.
    Then $\forall c\in \R: L^{-}_c$ is connected. In particular the set of minima $\{x\in \R^n|f(x)=f^*\}$ is connected.
\end{theorem}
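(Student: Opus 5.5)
We argue by contradiction, using Theorem~\ref{thm:katriel}. Suppose that for some $c$ the sublevel set $L^-_c$ is disconnected; necessarily $c\ge f^*$, since otherwise the set is empty and hence connected. First we check that $L^-_c$ is compact. Fix a minimizer $x^*$. Because $f$ is increasing at infinity, there is a compact set $K$ with $f(z)>f(x^*)=f^*$ for all $z\notin K$. This alone is not enough, because $c$ may exceed $f^*$. To handle this, we take any $x\in L^-_c$ with $f(x)=c$, or if there is none, a point whose value is close to $c$, and apply the property at that point. Doing so gives a compact $K$ with $f>c$ outside $K$, hence $L^-_c\subset K$. If no point attains the value $c$, we replace $c$ by $c'=\max_{L^-_c} f$; this maximum is attained by compactness, and $L^-_{c'}=L^-_c$, so we may assume the value $c$ is attained. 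Being closed and bounded, $L^-_c$ is compact. Since it is disconnected, it splits as $L^-_c=A\cup B$ with $A,B$ disjoint, nonempty and compact. Then $\delta=\operatorname{dist}(A,B)>0$.

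Next we build the separating set. Let $U=\{x:\operatorname{dist}(x,A)<\delta/2\}$ and $S=\partial U$. Every point of $S$ lies at distance exactly $\delta/2$ from $A$, so $S$ is disjoint from $A$. It is also disjoint from $B$, since points of $B$ are at distance at least $\delta$ from $A$. Hence $S\cap L^-_c=\emptyset$, that is, $f>c$ on $S$. Moreover $S$ is closed and bounded, hence compact, so $\inf_S f$ is attained and therefore $p=\inf_S f>c$. Now pick $x_0\in A$ and $x_1\in B$, so that $f(x_0),f(x_1)\le c<p$. Any path from $x_0\in U$ to $x_1\notin\overline U$ must cross $\partial U=S$. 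Therefore $x_0$ and $x_1$ lie in different components of $\R^n\setminus S$, which means $S$ separates them.

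Theorem~\ref{thm:katriel} now applies and yields a stationary point $x_2$ with $f(x_2)>\max\{f(x_0),f(x_1)\}$. The contradiction comes from choosing $x_0,x_1$ to be global minimizers, so that $f(x_2)>f^*$ while invexity forces $f(x_2)=f^*$. This requires that both $A$ and $B$ contain a minimizer. This step is the main obstacle. To settle it, we minimize $f$ over the compact set $\overline U$. Since $f>c$ on $\partial U$ while $A\subset U$ has values $\le c$, the minimum is attained in the open set $U$. It is therefore a local minimum and hence a stationary point, and by invexity its value is $f^*$. The minimizer lies in $U\cap L^-_c=A$. The same argument with the analogous neighbourhood of $B$ gives a minimizer in $B$. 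We then take these two minimizers as $x_0$ and $x_1$, and Theorem~\ref{thm:katriel} produces a stationary point $x_2$ with $f(x_2)>f^*$, contradicting invexity. Hence every $L^-_c$ is connected.

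The final claim about the set of minima is the case $c=f^*$. Here $\{f=f^*\}=L^-_{f^*}$, and this set is nonempty because $f^*$ is the minimum value, so it is connected.
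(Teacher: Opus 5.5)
Your proof is correct and is essentially the paper's argument: split $L^-_c=A\cup B$, build a set $S$ that separates $A$ from $B$ and misses $L^-_c$, get $\inf_S f>c$, and let Theorem~\ref{thm:katriel} produce a stationary point above level $c$. The one difference is that you first show $L^-_c$ is compact and take the compact set $S=\partial U$, which gives $\inf_S f>c$ directly, whereas the paper uses the possibly unbounded $S=\R^n\setminus(U\cup V)$ together with the compact set $K$ from the increasing-at-infinity assumption.

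Two small remarks. Your step producing minimizers in $A$ and $B$ is correct but unnecessary, since $f(x_2)>\max\{f(x_0),f(x_1)\}\ge f^*$ already contradicts invexity for any $x_0\in A$, $x_1\in B$. In the compactness step, a point with value close to $c$ from below would not give $f>c$ off $K$; this case never occurs, because a disconnected $L^-_c$ cannot be all of $\R^n$, so you can apply the increasing-at-infinity property at a point with $f>c$ (and the intermediate value theorem then shows $c$ is attained, so the replacement by $c'$ is superfluous).
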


\begin{figure}
    \centering

\begin{tikzpicture}[x=0.75pt,y=0.75pt,yscale=-1,xscale=1]

\definecolor{myYellow}{HTML}{FDE725FF}   %
\definecolor{myBlue}{HTML}{238A8DFF}     %
\definecolor{myRed}{RGB}{208,2,27}        %
\definecolor{myGreen}{HTML}{73D055FF}      %

\definecolor{myBlue}{HTML}{58C4DD}

\pgfdeclareradialshading{blueShading}{\pgfpoint{0cm}{0cm}}{%
    color(0.7cm)=(myBlue);
    color(1.0cm)=(white)}

\draw [fill=myYellow, draw=myYellow, rounded corners=20pt] (50,70) rectangle (350,230);

\begin{scope}
    \clip (131.16,148.08) circle (49);
    \shade [shading=blueShading] (131.16,148.08) circle (49);
\end{scope}
\draw [color=myYellow] (131.16,148.08) circle (49); %

\begin{scope}
    \clip (262,143) circle (49);
    \shade [shading=blueShading] (262,143) circle (49);
\end{scope}
\draw [color=myYellow] (262,143) circle (49); %

\draw [fill=myRed, fill opacity=0.75, draw opacity=0.64]
    (165,148) -- (141.51,180.34) -- (103.49,167.98) -- (103.49,128.02) -- (141.51,115.66) -- cycle;

\draw [fill=myGreen, fill opacity=1, draw opacity=0.81]
    (275.86,111.95) -- (295.81,146.58) -- (269.04,176.26) -- (232.54,159.97) -- (236.75,120.23) -- cycle;

\fill (123,163) circle (3); %
\fill (249,150) circle (3); %

\node [anchor=north west, inner sep=0.75pt, color=white] at (128,127) {A}; %
\node [anchor=north west, inner sep=0.75pt, color=white] at (258,121) {B}; %
\node [anchor=north west, inner sep=0.75pt, color=black] at (190,105) {S}; %
\node [anchor=north west, inner sep=0.75pt] at (126,150) {$x_{0}$}; %
\node [anchor=north west, inner sep=0.75pt] at (251,136) {$x_{1}$}; %
\node [anchor=north west, inner sep=0.75pt, color=black] at (109,106) {U}; %
\node [anchor=north west, inner sep=0.75pt, color=black] at (247,99) {V}; %

\end{tikzpicture}

    \caption{Construction of the set $\mathrm{S}$ (in yellow) in the proof of Theorem \ref{thm:main}.}
    \label{fig:setSconstruction}
\end{figure}
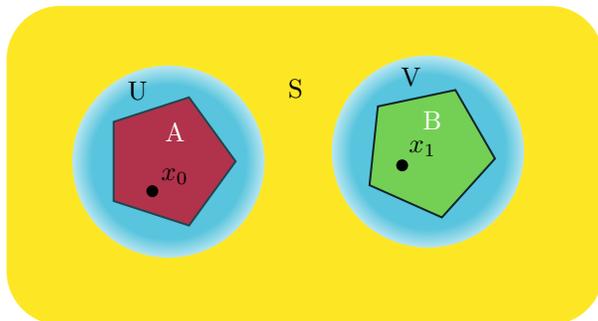

\begin{proof}
    If $c\geq\sup_{x\in \R^n} f(x)$ or $c<f^*$, the statement follows directly. So WLOG let us assume $f^* \leq c<\sup_{x\in \R^n} f(x)$.
    We prove the statement by contradiction and refer the reader to \Cref{fig:setSconstruction} for the visual intuition. Suppose that $L^-_c$ is disconnected. Then there exist disjoint, non-empty, closed sets $A,B$ such that $L^-_c=A \cup B$.

    As $\R^n$ is a normal space, we choose open, disjoint neighborhoods $U,V$ of $A$ and $B$. 
    Using $U$ and $V$, we denote by $\mathrm{S}=\R^n\setminus(U\cup V)$ their complement and choose any two points $x_0 \in A, x_1 \in B$, such that $\max\{f(x_0),f(x_1)\}=c$. This is possible because by definition $A \cup B = L^-_c$. We claim $\mathrm{S}$ fulfills the properties of the separating set in Theorem \ref{thm:katriel} for $x_0,x_1$. 

    First, we need to argue that $\mathrm{S}$ separates $x_0$ and $x_1$. This holds by definition since $\R^n\setminus \mathrm{S}= U \cup V$, $U \cap V =\emptyset$ and $x_0\in U, x_1 \in V.$ 
    
    Second, we need to show that 
    \[\max \left\{f\left(x_0\right), f\left(x_1\right)\right\}= c<\inf _{x \in \mathrm{S}} f(x).\]
    Since $f$ is increasing at infinity, there exists a compact set $K\subseteq \R^n$, such that $\forall x \in K^c : f(x) > c$. It thus suffices to show $c<\inf _{x \in \mathrm{S}\cap K} f(x)$. As $\mathrm{S}\cap K$ is compact, f attains its infimum on $\mathrm{S}\cap K$. As $L^-_c\cap \mathrm{S} \cap K=\emptyset$, it follows that $c<\inf _{x \in \mathrm{S}\cap K} f(x)$.

    Applying Theorem \ref{thm:katriel} to $x_0,x_1$ and $\mathrm{S}$ yields a stationary point $x_2$ with $f(x_2) > \max \left\{f\left(x_0\right), f\left(x_1\right)\right\}=c$. However, by invexity $\nabla f(x_2)=0$ implies $f(x_2)=f^*\leq c$, a contradiction. Therefore, we conclude $L_c^-$ is connected.
\end{proof}

\begin{rem}
    It follows equivalently from \Cref{thm:main} that the superlevel sets $L^+_c=\{x\in \R^n|f(x)\geq c\}$ of an incave function $f$, decreasing at infinity, are connected.
\end{rem}

As discussed previously, invex functions capture several important settings in optimization and machine learning. We therefore briefly discuss how \Cref{thm:main} can be leveraged to derive a novel result on the connectedness of sublevel sets of functions satisfying the Polyak-\L{}ojasiewicz inequality and recover an existing result on connected sublevel sets in the linear quadratic regulator problem.

We begin with the class of functions satisfying the (generalized) Polyak-\L{}ojasiewicz inequality \citep{Polyak1963Gradient,Lojasiewicz1963Une}.

\begin{definition}[$\alpha$-PL condition]
\label{def:aPL}
    A differentiable function \(f:\R^n \to \R \) satisfies the $\alpha$-PL condition if there exists a constant $\mu>0$, such that
    \[
    \|\nabla_x f(x)\|^\alpha \geq \mu \left(f(x) - \min_x f(x)\right).
    \]
\end{definition}
In the case of $f$ being 2-PL and $\mathcal{C}^2(\R^n)$, \citet{Criscitiello2025} showed the set of minimizers is a point. 
Beyond this result, to our knowledge, the question of whether the $\alpha$-PL condition implies connectivity of the sublevel sets remains open \citep{Zeng2023Connected}.  However, as the $\alpha$-PL condition directly implies invexity, we affirmatively answer this open question using \Cref{thm:main}.

\begin{prop}[$\alpha$-PL Implies Connectedness]
    \label{prop:PLconnect}
    Let $f:\R^n \rightarrow \R $ be a continuously differentiable function satisfying the $\alpha$-PL condition for $\alpha>1$. If the set of minima $\{x\in \R^n|f(x)=\min_x f(x)\}$ is bounded then for all $c\in \R$ the sublevel sets $ L^-_c$ are connected, including the set of minima.    
\end{prop}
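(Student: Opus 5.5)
The plan is to reduce to \Cref{thm:main}. Invexity is immediate from the $\alpha$-PL condition: if $\nabla f(x)=0$, then $0\geq \mu(f(x)-f^*)$, so $f(x)=f^*$. Here $f^*=\min_x f(x)$ is attained, which is implicit in \Cref{def:aPL}, and the set of minima $M$ is nonempty. Since $f\in\cC^1(\R^n)$ by assumption, the only remaining hypothesis is that $f$ is increasing at infinity. That is, for every $x$ we need a compact $K$ with $f>f(x)$ on $K^c$. Equivalently, for every $c>f^*$ the sublevel set $L^-_c$ is bounded; the case $c=f^*$ is exactly the boundedness of $M$. Then $K=L^-_{f(x)}$, which is closed and bounded, works.

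The key step is to prove boundedness of $L^-_c$ from boundedness of $M$, using a gradient-flow / error-bound argument. Set $g=(f-f^*)^{1/\beta}$ with $\beta$ chosen so that $g$ has gradient norm bounded below away from $M$. One has $\nabla g=\tfrac1\beta (f-f^*)^{1/\beta-1}\nabla f$. Using $\|\nabla f\|\geq \mu^{1/\alpha}(f-f^*)^{1/\alpha}$ gives
\[
\|\nabla g\|\geq \tfrac{\mu^{1/\alpha}}{\beta}(f-f^*)^{1/\beta-1+1/\alpha}.
\]
Choosing $1/\beta=1-1/\alpha>0$, which is where $\alpha>1$ is used, yields $\|\nabla g\|\geq \mu^{1/\alpha}/\beta=:\kappa>0$ wherever $f>f^*$.

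Now take $x$ with $f^*<f(x)\leq c$ and run the normalized descent flow $\dot z=-\nabla g(z)/\|\nabla g(z)\|$ from $x$. Along it $g$ decreases at unit rate, so the flow lasts for time at most $g(x)\leq (c-f^*)^{1/\beta}$ while remaining in $\{f>f^*\}$. Since $\nabla f$ is only continuous, I would use a Peano solution or, more safely, a pseudo-gradient vector field as in Palais. It moves at unit speed, so it travels at most $T:=(c-f^*)^{1/\beta}/\kappa$ in length. In the limit it reaches $M$, or at least comes arbitrarily close, since $g$ decreases linearly and cannot go below $0$. Hence $\operatorname{dist}(x,M)\leq T$. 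As $M$ is bounded, $L^-_c\subset M+\B_T$ is bounded, and closed by continuity, hence compact. This is a Łojasiewicz-type error bound $\operatorname{dist}(x,M)\lesssim (f(x)-f^*)^{1-1/\alpha}$.

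The main obstacle is making the flow argument rigorous with only $\cC^1$ regularity, in particular non-uniqueness and the behaviour near $M$ where $g$ is not differentiable. I would first try an Ekeland variational principle argument instead, which avoids ODEs. Suppose $\operatorname{dist}(x,M)>T$. Apply Ekeland to the continuous function $g\geq 0$ with $\varepsilon=g(x)$ and $\lambda$ slightly larger than $T$. This gives a point $y$ with $g(y)\leq g(x)$ and $\|y-x\|\leq\lambda$, at which $g(\cdot)+(g(x)/\lambda)\|\cdot-y\|$ is minimized. If $y\notin M$, then $g$ is differentiable at $y$, so $\|\nabla g(y)\|\leq g(x)/\lambda<\kappa$, contradicting the lower bound. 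So $y\in M$ and $\operatorname{dist}(x,M)\leq\lambda$. Choosing $\lambda\downarrow T$ gives the bound, after which \Cref{thm:main} finishes the proof.
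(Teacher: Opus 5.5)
Your argument is correct and has the same overall structure as the paper's proof. Invexity follows from the $\alpha$-PL inequality, and an error bound comes from the auxiliary function $g=(f-f^*)^{(\alpha-1)/\alpha}$, which satisfies $\|\nabla g\|\geq\kappa:=\frac{\alpha-1}{\alpha}\mu^{1/\alpha}$ off the minimizer set $M$. Boundedness of $M$ then gives increasing at infinity, and \Cref{thm:main} finishes.

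The difference is in how you prove the error bound $d(x,M)\leq g(x)/\kappa$. The paper's \Cref{lem:PLgrowth} runs the gradient flow $\dot x=-\nabla g(x)$. It shows the flow reaches $M$ in finite time and then uses $\int_0^T\|\nabla g\|^2\,dt\geq\kappa\cdot(\text{path length})\geq\kappa\, d(x,M)$. You instead apply Ekeland's variational principle to the continuous function $g\geq 0$ with $\varepsilon=g(x)$ and $\lambda\downarrow g(x)/\kappa$. This gives exactly \eqref{eq:growth} with the same constant.

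Your route needs only continuity of $g$ and differentiability off $M$. It therefore avoids what the ODE argument leaves implicit when $\nabla f$ is merely continuous: Peano solutions may be non-unique, $\nabla g$ blows up near $M$, and one must show the trajectory has a limit in $M$. The paper's flow argument is more geometric and is also stated for the local $\alpha$-PL condition. An Ekeland version of that local statement would need extra care to keep the Ekeland point inside the PL neighborhood.

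There are two small slips, neither of which affects the result:
\begin{itemize}
\item Increasing at infinity is strictly weaker than boundedness of every $L^-_c$ with $c>f^*$, so your ``equivalently'' holds only in the direction you use. Your bound in fact shows $f$ is coercive.
\item Along the normalized flow, $g$ decreases at rate $\|\nabla g\|\geq\kappa$, not at unit rate. That rate is what actually gives your length bound $T$.
\end{itemize}
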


\begin{proof}
    By definition, the $\alpha$-PL condition implies that $f$ is invex, since every stationary point is a global minimum. Moreover, \Cref{lem:PLgrowth} in \Cref{app:proofs} shows that $\alpha$-PL implies $\frac{\alpha}{\left(\alpha-1\right)}$-growth. If the set of minima is bounded, then this implies $f$ is increasing at infinity and the statement follows from \Cref{thm:main}.
\end{proof}

Similarly, our result can be applied to recover a known result on the connectedness of sublevel sets in the linear quadratic regulator problem.

\begin{rem}
Under the standard assumptions for the state-feedback LQR problem, the objective $f$ on the connected set of stabilizing feedback gains $\mathcal S$ is increasing at infinity and satisfies a non-uniform gradient dominance condition; in particular, $f$ is invex on $\mathcal S$. Hence \Cref{thm:main} recovers the known result of \citet{Fatkhullin2021Optimizing} that every sublevel set of $f$ on $\mathcal S$ is connected.
\end{rem}

%% file: chapters/games.tex
\section{Connected Solution Sets in Nonconvex Games}
\label{sec:connected_equilibria}

Moving beyond standard optimization problems, we examine the landscape of minimax problems and non-cooperative games. In general, solution concepts such as Nash equilibria can be disconnected, complicating their analysis and computation.

In \Cref{sec:zero_sum_games}, we study invex-incave minimax optimization. Leveraging \Cref{thm:main}, we can show that the best response sets are connected, which under certain assumptions implies that the set of saddle points is connected. Next, we focus on $n$-player games. \Cref{sec:potential_games} demonstrates the connectedness of the set of Nash equilibria in incave potential games. For general games with incave utilities, this does not hold. However, in \Cref{sec:incavegames} we prove the connectedness of the set of rationalizable actions, which contains all Nash equilibria.

\subsection{Invex-Incave Minimax Optimization}
\label{sec:zero_sum_games}

In this section, we consider minimax problems and study the connectedness of their solution sets. Minimax optimization captures settings where we optimize over a set of worst-case scenarios. It was originally studied in the context of two-player zero-sum games by \citet{Neumann1928Zur} and has since found applications in nonsmooth optimization, robust optimization and adversarial machine learning. 

Formally, we study the following invex-incave minimax problem, which can be seen as a two-player zero-sum game between a minimizing player choosing $x$ and a maximizing player choosing $y$:
\begin{equation}
    \label{eq:minimax_problem}
    \min_{x\in \R^{n_x}} \max_{y\in \R^{n_y}} f(x,y).
\end{equation}

\begin{assum}
    \label{assum:minmaxinvex}
    Throughout this section, we assume $f \in \mathcal{C}^1(\R^{n_x} \times \R^{n_y})$, that for all $x \in \R^{n_x}$, the function $f(x,\cdot)$ is incave and decreasing at infinity, and that for all $y \in \R^{n_y}$, the function $f(\cdot,y)$ is invex and increasing at infinity. Any deviating or additional assumptions needed are stated at the beginning of the respective result.
\end{assum}

\noindent We use the following notation:
\begin{itemize}
    \item $F(x) := \max_{y\in \R^{n_y}} f(x, y)$ and $G(y) := \min_{x\in \R^{n_x}} f(x,y)$ are the primal and dual functions.
    \item  $X := \{x\in \R^{n_x}: F(x) = \min_{x\in \R^{n_x}} F(x) \}$ and $Y:= \{y\in \R^{n_y}: G(y) =\max_{y\in \R^{n_y}} G(y)\}$ are the primal and dual optimal sets.
    \item $\text{BR}_x(y):=\{x\in \R^{n_x}:x\in \arg\min f(x,y)\}$ and $\text{BR}_y(x):=\{y\in \R^{n_y}:y\in \arg\max f(x,y)\}$ are the best response maps of the $x$-player given $y$, respectively the $y$-player given $x$. 
\end{itemize}

 We are interested in the following three solution concepts and are particularly interested in the topology of the set of saddle points.

\begin{definition}[Solution Concepts for Minimax Optimization]
\label{def:solution}
\noindent
\begin{enumerate}
    \item $(x^*, y^*)$ is a \emph{minimax point}, if for any $(x, y)\in \R^{n_x} \times \R^{n_y}$: $f(x^*, y) \leq f(x^*, y^*) \leq \max_{y'} f(x, y').$
    We denote the set of minimax points by $\underline{M}$.

    \item $(x^*, y^*)$ is a \emph{maximin point}, if for any $(x, y)\in \R^{n_x} \times \R^{n_y}$:
      $\min_{x'} f(x', y) \leq f(x^*, y^*) \leq f(x, y^*).$
    We denote the set of maximin points by $\overline{M}$.

    \item $(x^*, y^*)$ is a \emph{saddle point} or \emph{Nash equilibrium}, if for any $(x, y)\in \R^{n_x} \times \R^{n_y}$:
        $f(x^*, y) \leq f(x^*, y^*) \leq f(x, y^*).$ We denote the set of saddle points by $E= \underline{M} \cap \overline{M}$.  
        If $f$ is invex-incave, all saddle points $(x^*,y^*)$ are stationary points and vice versa, i.e., $ \nabla_x f(x^*, y^*) = \nabla_y f(x^*, y^*) = 0.$
\end{enumerate}

\end{definition}

By \Cref{thm:main}, it follows that $\BR_y(x)$ and $\BR_x(y)$ are connected and compact. However, it is an open question if the connectedness extends to $E,\underline{M}$, and $\overline{M}$. We first show an intermediate result that the union of the minimax and maximin points $\underline{M} \cup \overline{M}$ is connected.

\begin{prop}
\label{prop:stackelberg}
    Under \Cref{assum:minmaxinvex} and assuming that $E\neq \emptyset$,\footnote{This ensures all sets exist. Note an empty set is trivially connected.} the set $\underline{M} \cup \overline{M}$ is connected.
\end{prop}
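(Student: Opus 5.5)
The plan is to describe $\underline{M}$ and $\overline{M}$ as unions of best-response ``fibers'', show that each fiber is connected, and then check that all fibers are glued together through the saddle points. I take $E\neq\emptyset$ and fix a saddle point $(\bar x,\bar y)\in E$.

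\emph{Step 1: characterize the three sets.} By the definition of a minimax point, $(x^*,y^*)\in\underline{M}$ if and only if $y^*\in\BR_y(x^*)$, so that $f(x^*,y^*)=F(x^*)$, and $F(x^*)\le F(x)$ for all $x$, i.e.\ $x^*\in X$. Hence
\[
\underline{M}=\bigcup_{x\in X}\{x\}\times\BR_y(x),
\qquad
\overline{M}=\bigcup_{y\in Y}\BR_x(y)\times\{y\}.
\]
Since a saddle point exists, the standard argument gives $\min_x F=\max_y G=f(\bar x,\bar y)=:v$, so $X,Y\neq\emptyset$. It also gives the product structure $E=X\times Y$: for $x\in X$ and $y\in Y$,
\[
v=G(y)\le f(x,y)\le F(x)=v,
\]
so $(x,y)$ is a saddle point. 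In particular, $y\in\BR_y(x)$ and $x\in\BR_x(y)$ whenever $x\in X$ and $y\in Y$.

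\emph{Step 2: each fiber is nonempty and connected.} For fixed $x$, the function $f(x,\cdot)$ is $\cC^1$, incave and decreasing at infinity by \Cref{assum:minmaxinvex}. Its maximum is attained: the superlevel set at any value is contained in a compact set, and $f$ is continuous there. The remark following \Cref{thm:main} then shows that its superlevel set at the maximal value, namely $\BR_y(x)$, is connected. The same holds for $\BR_x(y)$ by \Cref{thm:main} applied to $f(\cdot,y)$. Consequently each fiber $\{x\}\times\BR_y(x)$ and each fiber $\BR_x(y)\times\{y\}$ is connected and nonempty.

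\emph{Step 3: glue the fibers.} I use the fact that a union of connected sets, each meeting a fixed connected set, is connected. Let $C:=\BR_x(\bar y)\times\{\bar y\}$, which is connected and contained in $\overline{M}$.
\begin{itemize}
\item For every $x\in X$, Step 1 gives $\bar y\in\BR_y(x)$ and $x\in\BR_x(\bar y)$. So $(x,\bar y)$ lies in both the fiber $\{x\}\times\BR_y(x)$ and $C$. Hence $C\cup\underline{M}$ is connected.
\item For every $y\in Y$, the fiber $\BR_x(y)\times\{y\}$ contains $(\bar x,y)$, because $\bar x\in\BR_x(y)$. The point $(\bar x,y)$ also lies in $\{\bar x\}\times\BR_y(\bar x)\subseteq\underline{M}$, because $y\in\BR_y(\bar x)$.
\end{itemize}
Adjoining all these $\overline{M}$-fibers to the connected set $C\cup\underline{M}$ therefore keeps it connected. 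The resulting set is exactly $\underline{M}\cup\overline{M}$.

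The main obstacle I expect is Step 1, namely making rigorous that the existence of one saddle point yields $\min F=\max G$, that these extrema are attained, and that $E=X\times Y$. These are the standard minimax facts, but they must be checked in this unbounded setting, where attainment of the inner max and min relies on the ``decreasing/increasing at infinity'' assumptions. Once these facts are in place, the topological gluing in Step 3 is routine.
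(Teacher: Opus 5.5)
Your proposal is correct and follows essentially the same route as the paper. Both arguments use the product structure $E=X\times Y$ and link every point of $\underline{M}$ (resp.\ $\overline{M}$) to a fixed saddle point through the fibers $\{x\}\times\BR_y(x)$ and $\BR_x(\bar y)\times\{\bar y\}$ (resp.\ $\BR_x(y)\times\{y\}$ and $\{\bar x\}\times\BR_y(\bar x)$), which are connected by \Cref{thm:main}. Your ``union of connected sets meeting a common connected set'' formulation is a slightly cleaner packaging of the paper's connected-component argument. The Step~1 point you flag is already settled by the weak-duality argument you sketch, together with attainment of the inner max and min from the increasing/decreasing-at-infinity assumptions.
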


From \Cref{def:solution}, it follows that $E \subseteq \underline{M}$ and $E \subseteq \overline{M}$. As $E$ is assumed to be non-empty, it follows that the minimax and maximin values coincide, i.e. $\min_x\max_y f(x,y)=\max_y\min_x f(x,y)$. Note, this does not imply that every minimax or maximin point is also a saddle point. However, if $E=\underline{M}=\overline{M}$ holds, then all three sets are equal to $\underline{M} \cup \overline{M}$ and thus connected by \Cref{prop:stackelberg}. We can prove this under the additional assumption of inner Lipschitz continuity of the best response maps.

\begin{definition}[\Locinnlip]
\label{def:lilc}
A set-valued map $S:\R^n \rightrightarrows \R^{m}$ is \locinnlip\ at $\overline{x}$ for $\overline{y}$, if $\overline{y}\in S(\overline{x})$ and there exists a constant $\kappa \geq 0$ and an open neighborhood $N_{\overline{x}}$ of $\overline{x}$ such that:
\begin{equation*}
    \forall x \in N_{\overline{x}}: d(\overline{y},S(x)) \leq \kappa \|x-\overline{x}\|,
\end{equation*}
\end{definition}

\begin{theorem}
\label{thm:lilc} Assume that for all $(x,y)\in \underline{M}$, $\BR_y(x)$ is \lilc\ at $x$ for $y$ and conversely for all $(x,y)\in \overline{M}$, $\BR_x(y)$ is \lilc\ at $y$ for $x$. Then $E=\underline{M}=\overline{M}$ and this common set is connected.
\end{theorem}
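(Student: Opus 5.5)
The plan is to show the inclusions $\underline{M}\subseteq E$ and $\overline{M}\subseteq E$. Together with $E\subseteq\underline{M}\cap\overline{M}$ from \Cref{def:solution}, these give $E=\underline{M}=\overline{M}=\underline{M}\cup\overline{M}$, and connectedness then follows from \Cref{prop:stackelberg}. That proposition needs $E\neq\emptyset$, so I would dispose of the degenerate case first. If $\underline{M}\cup\overline{M}=\emptyset$, then all three sets are empty and trivially connected. Otherwise the inclusion argument below produces a saddle point, so $E\neq\emptyset$ and \Cref{prop:stackelberg} applies. By symmetry (replace $f$ by $-f$ and swap the roles of the players), it suffices to treat $\underline{M}$.

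Fix $(x^*,y^*)\in\underline{M}$. By definition $y^*\in\BR_y(x^*)$, so $F(x^*)=f(x^*,y^*)$, and $x^*$ minimizes $F$. The goal is to show $d:=\nabla_x f(x^*,y^*)=0$. Once this holds, invexity of $f(\cdot,y^*)$ makes $x^*$ a global minimizer of $f(\cdot,y^*)$. Combined with $f(x^*,y)\le f(x^*,y^*)$ for all $y$, this makes $(x^*,y^*)$ a saddle point.

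To show $d=0$, I use local inner Lipschitz continuity of $\BR_y$ at $x^*$ for $y^*$. Set $x_t=x^*-td$ for small $t>0$. Since $\BR_y(x_t)$ is nonempty and compact (by \Cref{thm:main} applied to $-f(x_t,\cdot)$), there is $y_t\in\BR_y(x_t)$ with $\|y_t-y^*\|\le\kappa t\|d\|$, so $y_t\to y^*$. The chain of inequalities is
\[
f(x^*,y^*)=F(x^*)\le F(x_t)=f(x_t,y_t),
\qquad
f(x^*,y_t)\le f(x^*,y^*),
\]
where the first uses minimality of $x^*$ for $F$ and the second uses that $y^*$ maximizes $f(x^*,\cdot)$. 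Subtracting gives $0\le f(x_t,y_t)-f(x^*,y_t)$. By the mean value theorem the right-hand side equals $-t\,\nabla_x f(\xi_t,y_t)^T d$ for some $\xi_t$ on the segment $[x^*,x_t]$. Since $f\in\cC^1$ and $(\xi_t,y_t)\to(x^*,y^*)$, this is $-t\|d\|^2+o(t)$. Dividing by $t$ and letting $t\downarrow0$ yields $\|d\|^2\le0$, hence $d=0$.

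The main obstacle is exactly this step. Without some continuity of the best response, $y_t$ could jump away from $y^*$, and $\nabla_x f(\xi_t,y_t)$ need not approximate $d$. Local inner Lipschitz continuity supplies the needed selection $y_t\to y^*$; only convergence is used, not the Lipschitz rate. The maximin case is handled identically: perturb $y^*$ along $\nabla_y f(x^*,y^*)$, use local inner Lipschitz continuity of $\BR_x$, and conclude via incavity of $f(x^*,\cdot)$.
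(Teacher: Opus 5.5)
Your proof is correct, but its key step differs from the paper's.

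\textbf{How the paper argues.} It proceeds by contradiction and expands $f(x_\delta,y_\delta)$ to first order jointly in both variables around the minimax point. The linear $y$-term vanishes because $\nabla_y f=0$ there. The Lipschitz rate $\|y_\delta-y\|\le\kappa\delta\|\nabla_x f(x,y)\|$ is then needed to turn the remainder $o(\|x_\delta-x\|+\|y_\delta-y\|)$ into $o(\delta)$.

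\textbf{How you argue.} You use the maximality of $y^*$ for $f(x^*,\cdot)$ as an inequality, $f(x^*,y_t)\le f(x^*,y^*)\le f(x_t,y_t)$. This decouples the two variables: only an $x$-increment at the fixed second argument $y_t$ has to be expanded. The mean value theorem and continuity of $\nabla_x f$ at $(x^*,y^*)$ then only require $y_t\to y^*$. Equivalently, you obtain the descent bound $F(x_t)\le F(x^*)-t\|\nabla_x f(x^*,y^*)\|^2+o(t)$ without using any rate.

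\textbf{What each route buys.} As you observe, your argument proves the conclusion under a weaker hypothesis: it suffices that the distance from $y$ to $\BR_y(x')$ tends to $0$ as $x'\to x$ for $(x,y)\in\underline{M}$, and symmetrically for $\overline{M}$. That is inner semicontinuity at the point rather than inner Lipschitz continuity. In particular, it would make the Hölder variant \Cref{thm:Holderconnected} hold for every exponent $\alpha>0$ and without the Lipschitz-gradient assumption. The paper's joint Taylor expansion is the more direct envelope-type computation, but it is exactly what forces a rate on the best-response map.

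Your explicit treatment of the case $\underline{M}\cup\overline{M}=\emptyset$ also fills a step the paper leaves implicit. It is what allows \Cref{prop:stackelberg}, which assumes $E\neq\emptyset$, to be invoked legitimately.
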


The full proof can be found in \Cref{sec:proof:lilc}. The intuition behind it is as follows.
Take a minimax point $(x',y')\in \underline{M}$ for example. If it were not a saddle point, then $\nabla_x f \neq 0$. Moreover, the inner Lipschitz continuity prevents the best response map $\BR_y(x)$ from moving away too far from $y'$, as we vary $x$. Therefore, we can choose a small enough $\delta>0$ such that $F(x'-\delta \nabla_x f(x',y'))<F(x')$, contradicting the assumption that $(x',y')$ is a minimax point. The same argument applies to maximin points. Therefore, all minimax and maximin points are saddle points and thus $E=\underline{M}=\overline{M}$. By \Cref{prop:stackelberg}, this common set is connected.

We are naturally interested in identifying which properties of $f$ guarantee that $\BR_x$ and $\BR_y$ are \lilc. This is because $f$, in contrast to the best response maps, is given in explicit form in \eqref{eq:minimax_problem} and thus conditions on $f$ are generally easier to verify. One important such property is the \emph{two-sided PL condition}, which characterizes a class of nonconvex-nonconcave minimax problems and has attracted recent interest in the optimization community.

\begin{definition}[Two-sided PL inequality \citep{yang2020global}]
    \label{def:2PL}
A continuously differentiable function $f(x, y):R^{n_x}\times \R^{n_y}\to \R$ satisfies the two-sided PL condition if there exist constants $\mu_1, \mu_2>0$ such that:
$$
\begin{aligned}
&\forall (x,y)\in \R^{n_x}\times \R^{n_y}: \left\|\nabla_x f(x, y)\right\|^2 \geq 2 \mu_1\left(f(x, y)-\min _x f(x, y)\right) \\
& \forall (x,y)\in \R^{n_x}\times \R^{n_y}:\left\|\nabla_y f(x, y)\right\|^2 \geq 2 \mu_2\left(\max _y f(x, y)-f(x, y)\right)
\end{aligned}
$$
\end{definition}

\begin{prop}
\label{prop:PLconnected}
Let $f$ satisfy the two-sided PL condition. Further, assume that $\nabla_y f(\cdot,\overline{y})$ is locally $L_{\overline{y}}$-Lipschitz continuous at all $(\overline{x},\overline{y})\in \underline{M}$, that $\nabla_x f(\overline{x},\cdot)$ is locally $L_{\overline{x}}$-Lipschitz continuous at all $(\overline{x},\overline{y})\in \overline{M}$, as well as that for all $x,y$, the set of minima of $f(\cdot,y)$ and maxima of $f(x,\cdot)$ are bounded. Then $E=\underline{M}=\overline{M}$ and this common set is connected.
\end{prop}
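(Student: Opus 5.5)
The plan is to reduce \Cref{prop:PLconnect}'s hypotheses to those of \Cref{thm:lilc}. First, \Cref{assum:minmaxinvex} has to hold. The two-sided PL inequality says that $f(\cdot,y)$ satisfies the $2$-PL condition of \Cref{def:aPL} for every fixed $y$, and that $-f(x,\cdot)$ does so for every fixed $x$. Since the sets of minima of $f(\cdot,y)$ and maxima of $f(x,\cdot)$ are bounded, \Cref{lem:PLgrowth} gives quadratic growth away from these sets. As in the proof of \Cref{prop:PLconnect}, this shows that $f(\cdot,y)$ is invex and increasing at infinity, and $f(x,\cdot)$ is incave and decreasing at infinity. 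So \Cref{assum:minmaxinvex} holds, and it remains to verify local inner Lipschitz continuity of the best responses at minimax and maximin points.

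Fix $(\overline{x},\overline{y})\in\underline{M}$; the maximin case is symmetric. Then $\overline{y}\in\BR_y(\overline{x})$. For $x$ near $\overline{x}$ we must bound $d(\overline{y},\BR_y(x))$. The tool is the error-bound (quadratic growth) consequence of PL, i.e.\ \Cref{lem:PLgrowth} with $\alpha=2$, applied to $g=-f(x,\cdot)$:
\[
\tfrac{\mu_2}{2}\, d(y,\BR_y(x))^2 \le \max_{y'} f(x,y') - f(x,y).
\]
Combined with PL this yields the standard gradient error bound
\[
d(y,\BR_y(x)) \le \tfrac{1}{\mu_2}\,\|\nabla_y f(x,y)\|.
\]
To derive it, multiply growth and PL to get $\tfrac{\mu_2}{2}d^2\le \frac{\|\nabla_y f\|^2}{2\mu_2}$, which gives $d\le \|\nabla_y f\|/\mu_2$. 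Now evaluate at $y=\overline{y}$. Since $\overline{y}$ maximizes $f(\overline{x},\cdot)$, we have $\nabla_y f(\overline{x},\overline{y})=0$. The local Lipschitz continuity of $\nabla_y f(\cdot,\overline{y})$ at $\overline{x}$ then gives
\[
d(\overline{y},\BR_y(x))\le \frac{1}{\mu_2}\|\nabla_y f(x,\overline{y})-\nabla_y f(\overline{x},\overline{y})\|\le \frac{L_{\overline{y}}}{\mu_2}\|x-\overline{x}\|
\]
on a neighborhood of $\overline{x}$. This is exactly \Cref{def:lilc} with $\kappa=L_{\overline{y}}/\mu_2$. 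The same argument with $\mu_1$ and $L_{\overline{x}}$ handles $\BR_x(\overline{y})$ at $(\overline{x},\overline{y})\in\overline{M}$. Applying \Cref{thm:lilc} then gives $E=\underline{M}=\overline{M}$ and connectedness.

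The main obstacle is the growth/error-bound step. \Cref{lem:PLgrowth} is only cited, not stated, so I would re-derive it: run the gradient flow $\dot y=\nabla_y f(x,y)$ from $y$ and use PL to show that $\sqrt{\max f - f}$ decreases at rate at least $\sqrt{\mu_2/2}\,\|\dot y\|$. Integrating, the flow has finite length at most $\sqrt{2(\max f-f(x,y))/\mu_2}$ and converges to a point of $\BR_y(x)$, so $d(y,\BR_y(x))$ is at most that length. This requires the flow to exist globally; that follows from the $\mathcal{C}^1$ hypothesis via a Peano solution together with monotonicity of $f$ along the flow. The length bound then requires no uniqueness. A minor secondary point is that $\BR_y(x)$ must be nonempty for all $x$ near $\overline{x}$, which also follows from the convergence of the gradient flow.
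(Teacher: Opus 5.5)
Your proposal is correct and follows the paper's proof. Two-sided PL gives invexity-incavity, and via \Cref{lem:PLgrowth} and the bounded solution sets it gives growth at infinity. Chaining quadratic growth with PL then yields the global error bound $d(y,\BR_y(x))\le \|\nabla_y f(x,y)\|/\mu_2$. Combining this with $\nabla_y f(\overline{x},\overline{y})=0$ and the Lipschitz continuity of $\nabla_y f(\cdot,\overline{y})$ gives local inner Lipschitz continuity with $\kappa=L_{\overline{y}}/\mu_2$, which is exactly what the paper packages as \Cref{prop:EB} before invoking \Cref{thm:lilc}. Your gradient-flow re-derivation of the growth bound is a valid but unnecessary substitute for citing \Cref{lem:PLgrowth}. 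Also, your opening sentence should refer to the present two-sided proposition rather than \Cref{prop:PLconnect}.
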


\begin{figure}
    \centering
    \includegraphics[width=0.6\textwidth,trim=0 0.6cm 0 1.7cm,clip]{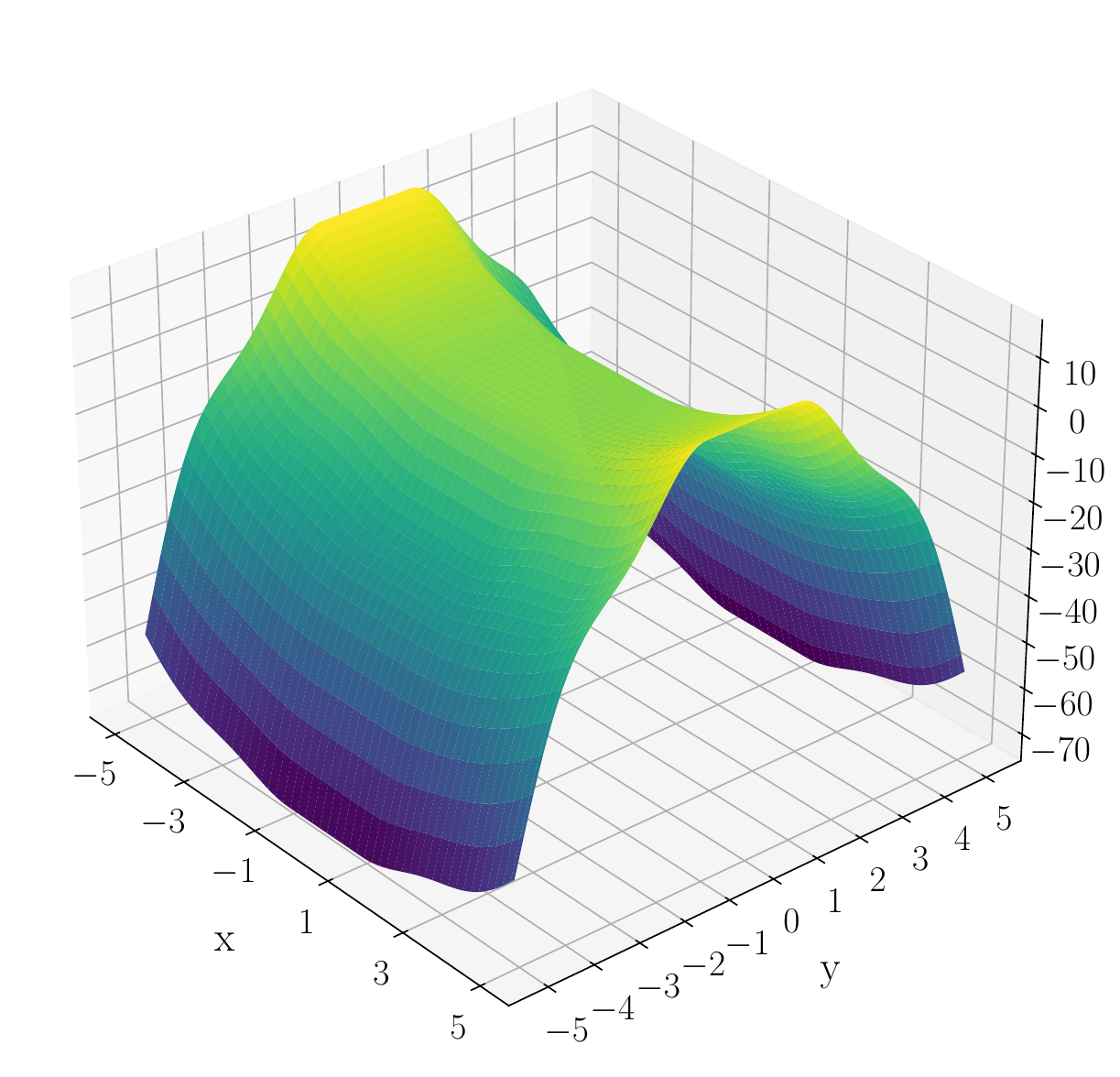}
    \caption{For $x,y \in \R$, define $a = \max\left(|x| - 1, 0\right)$ and $b = \max\left(|y| - 1, 0\right)$. The plot shows the function $f(x,y)= a^2 +3 \sin^2(b)\sin^2(a) - 4b^2-10\sin^2(b)$.
    This function is nonconvex-nonconcave, but satisfies the assumptions of \Cref{prop:PLconnected} (verified in \Cref{sec:proof:ex:2PLfunction}). Therefore $E=\underline{M}=\overline{M}$ and this common set is connected. A direct computation shows $E=[-1,1]\times[-1,1]$.}
    \label{fig:3d_plot_sine}
\end{figure}
An example of a nonconvex-noncave function, satisfying the assumptions of \Cref{prop:PLconnected} and thus with a connected set of saddle points, is given in \Cref{fig:3d_plot_sine}. 
\begin{rem}[Relaxation to Local Inner Hölder Continuity]
    In \Cref{thm:lilc}, we assumed that the best response maps are locally inner Lipschitz continuous. This can be further relaxed to local inner Hölder continuity if we instead assume that $\nabla f$ is locally Lipschitz continuous on $\underline{M}\cup\overline{M}$ and that $f$ locally satisfies the $\alpha$-PL condition. A complete exposition of these results can be found in \Cref{app:Holder}.
\end{rem}

\subsection{Incave Potential Games}
\label{sec:potential_games}
In this section, we study incave potential games and will show that their Nash equilibrium is connected.

We denote an $n$-player game by a tuple $(N,\mathcal{A},u)$, where $N=\{1,\ldots,n\}$ is the set of players, $\mathcal{A}=\prod_{i=1}^n A_i$ is the continuous joint action space and $u=(u_1,\ldots,u_n)$ is the joint utility, where $u_i:\mathcal{A}\to \R$ is the utility function of player $i$. We assume all $A_i = \R^{n_i}$ for some $n_i \in \mathbb{N}$. We adopt the the subscript notation $a_{-i}$ to denote the actions of all players except player $i$ and $a_i$ to denote the action of player $i$. Analogous to \Cref{sec:zero_sum_games}, we denote by $\BR_i(a_{-i}) =\arg\max_{a_i\in A_i} u_i(a_i,a_{-i})$ the best response map of player $i$ to action $a_{-i}$. We begin with the standard definition of a Nash equilibrium. 

\begin{definition}[Nash equilibrium]
An action profile $a=(a_1,\dots,a_n)$ is a \textit{Nash equilibrium} if for all $i\in N$ it holds that $a_i\in \BR_i(a_{-i})$.
\end{definition}

\citet{Monderer1996Potential} introduced continuous potential games, where the utilities of all players are related through a single potential function. In this section, we will study them under the added assumption of incavity.

\begin{definition}[Incave Potential Game]
        A game $(N,\mathcal{A},u)$ is an \emph{incave potential game} if the utility functions $u_i$ are continuously differentiable and there exists a \emph{potential function} $P:\mathcal{A}\to \R$ that satisfies $\frac{\partial P(a)}{\partial a_i} = \frac{\partial u_i(a)}{\partial a_i}$ for all $i\in N$ and $a\in \mathcal{A}$. Moreover, $P$ is continuously differentiable, incave, and decreasing at infinity.

\end{definition}

For potential games, it was shown that the set of Nash equilibria is equal to the set of maxima of the potential function \citep{Monderer1996Potential}. Based on this result, several works have discussed how a (strictly) concave potential function implies the existence of a (unique) Nash equilibrium \citep{Neyman1997Correlated,Mertikopoulos2018Learning}. Using \Cref{thm:main}, we can easily extend these results to the case of incave potential functions.

\begin{prop}
    \label{prop:incave_potential}
    Let $(N,\mathcal{A},u)$ be an incave potential game with potential function $P$. Then the game has a Nash equilibrium and the set of Nash equilibria is connected.
\end{prop}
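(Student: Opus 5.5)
The plan is to reduce the statement to \Cref{thm:main} applied to $-P$, via the known identification of Nash equilibria with the maximizers of the potential. There are three steps: existence of a maximizer of $P$, the identification of the Nash equilibria with $\arg\max P$, and connectedness of $\arg\max P$.

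\textbf{Existence.} Since $P$ is decreasing at infinity, pick any $a^0\in\mathcal A$. There is a compact $K$ with $P(z)<P(a^0)$ for all $z\notin K$. By continuity, $P$ attains its maximum on $K$, and this maximum is at least $P(a^0)$. It therefore dominates every value of $P$ outside $K$, so it is a global maximum. Hence $\arg\max P\neq\emptyset$, and this set is compact.

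\textbf{Identification $\text{NE}=\arg\max P$.} I would prove this directly rather than only citing \citet{Monderer1996Potential}.

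First take $a\in\arg\max P$. Then $\nabla P(a)=0$, so $\partial u_i(a)/\partial a_i=0$ for every $i$. To see that $a_i\in\BR_i(a_{-i})$, use the standard fact for differentiable potentials (Monderer--Shapley): $u_i(\cdot,a_{-i})-P(\cdot,a_{-i})$ has zero gradient on the connected set $A_i=\R^{n_i}$, hence is constant in $a_i$. Therefore $u_i(b_i,a_{-i})-u_i(a_i,a_{-i})=P(b_i,a_{-i})-P(a_i,a_{-i})\le 0$, so $a_i$ is a best response.

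Conversely, let $a$ be a Nash equilibrium. Each $a_i$ maximizes the differentiable function $u_i(\cdot,a_{-i})$ over $\R^{n_i}$, so $\partial u_i/\partial a_i(a)=0$. Hence $\partial P/\partial a_i(a)=0$ for all $i$, i.e.\ $\nabla P(a)=0$. Since $P$ is incave, $-P$ is invex, so every stationary point of $-P$ is a global minimizer of $-P$. Thus $a\in\arg\max P$.

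Note that incavity is needed only in this reverse direction; it is what rules out Nash equilibria at saddle points of $P$.

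\textbf{Connectedness.} The function $-P$ is $\cC^1$, invex, and increasing at infinity, because $P$ is decreasing at infinity. By \Cref{thm:main} (equivalently, the Remark following it applied to $P$), the set of minima of $-P$, which is $\arg\max P$, is connected. Combined with the identification above, the set of Nash equilibria is nonempty and connected.

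\textbf{Main obstacle.} The only delicate point is the identification step, and specifically the claim that $u_i-P$ is independent of $a_i$. This requires the partial-derivative identity to hold at every point, together with the fact that each $A_i=\R^{n_i}$ is connected (so zero gradient implies constancy). Both hold here, so I expect this step to be routine once written out.
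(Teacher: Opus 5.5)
Your proof is correct and follows the same route as the paper. Existence of a maximizer of $P$ comes from continuity plus decrease at infinity, the Nash equilibria are identified with $\arg\max P$, and \Cref{thm:main} is applied to $-P$. The only difference is that you prove the identification directly instead of citing \citet{Monderer1996Potential}; in doing so you make explicit that incavity of $P$ is what puts every Nash equilibrium in $\arg\max P$, a step the paper passes over.
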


Below, we present an example of an incave potential game motivated by applications in economics and show how \Cref{prop:incave_potential} applies to it.

\begin{example}
    \label{ex:econincave}
    Consider a game where the utility functions take the following form:
    \begin{equation}
        \label{eq:utility}
        u_i(a) = b(a)-c_i(a_i),
    \end{equation}
    where $b: \mathcal{A} \to \R$ is a shared reward (or cost) function of all players and $c_i: A_i \to \R$ is the individual cost (or reward) of player $i$ for playing $a_i$. This captures, for example, problems of pollution, the tragedy of the commons \citep{Hardin1968Tragedy}, congestion games \citep{Rosenthal1973class}, or team games with a shared reward function and private costs or benefits. The game has a potential function given by \(
        P(a) = b(a) - \sum_{i=1}^n c_i(a_i).
    \)
    Assume that $b$ is incave with respect to an $\eta$ and decreasing at infinity and that all $c_i$ are invex with respect to the same $\eta$ and increasing at infinity. Then $P$, as a linear combination of those functions, is incave with respect to $\eta$ and decreasing at infinity \citep{Mishra2008Invexity}. Therefore, by \Cref{prop:incave_potential}, the set of Nash equilibria is connected.
\end{example}

\subsection{$n$-player Incave Games}
\label{sec:incavegames}

While the previous section established the connectedness of Nash equilibria in incave potential games, this property does not extend to $n$-player incave games. However, we show that for those games, the set of \textit{rationalizable} actions, which contains all Nash equilibria, is connected. 

We begin by formally defining incave games below. They generalize the class of concave games, introduced by \citet{Rosen1965Existence}, where the utility functions are concave in the actions of the corresponding player and which captures important applications in economics. Note, we continue to use the game-theoretic notation introduced in \Cref{sec:potential_games}.

\begin{definition}[Incave Games]
    \label{def:incavegames}
    A game $(N,\mathcal{A},u)$ is called incave, if for all $i\in N$, the utility function $u_i$ is incave in $a_i$ and $u_i$ is decreasing at infinity with respect to $a_i$.
\end{definition}

\begin{figure}
    \centering
    \includegraphics[width=\textwidth,trim=0.3cm 0.3cm 0.3cm 0.2cm,clip]{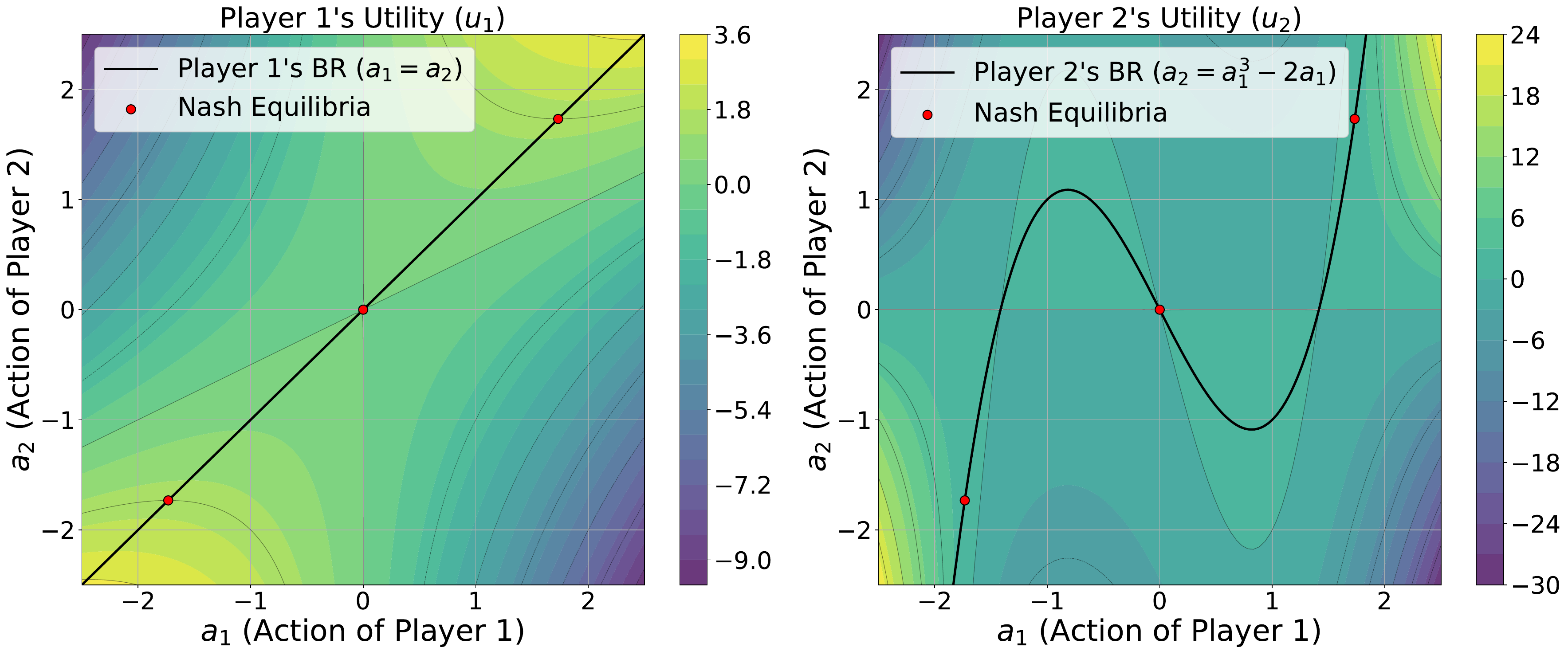}
    \caption{Plot of the utility functions $u_1(a_1,a_2)=-\tfrac{1}{2} a_1^2 +a_1 a_2$ and $u_2(a_1,a_2)=-\tfrac{1}{2} a_2^2 +(a_1^3-2 a_1)$ of a two-player game. The utilities are incave in $a_i$ and have three disconnected Nash equilibria $(0,0)$, $(-\sqrt{3},-\sqrt{3})$ and $(\sqrt{3},\sqrt{3})$.}
    \label{fig:nash_disconnect_utilities}
\end{figure}
In general the set of Nash equilibria is not connected for incave games, not even if we restrict to concave games. This is illustrated in \Cref{fig:nash_disconnect_utilities}.

In this section, we thus study the set of $S,k$-rationalizable and $S$-rationalizable actions. Both constitute important solution concepts for three reasons: (1) they arise naturally from assumptions on players' rationality and common knowledge thereof, (2) $S,k$-rationalizable actions are good predictors of human behavior, and (3) $S$-rationalizable actions are the limit set of best-response dynamics.

We begin by defining the set-valued joint best response map, which maps $S\subset \mathcal{A}$ to a subset of $\mathcal{A}$ and is defined as $\lambda(S)= \prod_{i=1}^n \cup_{a\in S} \BR_i(\pi_{-i}(a))$, where $\pi_{-i}$ is the projection of a joint action $a$ onto the actions of all players except $i$. Intuitively, $\lambda(S)$ is the set of actions, where each player plays some best response to a possible action of the other players in $S$.
We recursively define $\lambda^k(S)=\lambda \circ \lambda^{k-1}(S)$ and call the set $\lambda^k(S)$ the set of $S,k$--rationalizable actions. Note that $\lambda^k(S)$ contains all actions that survive $k$ iterations of simultaneous elimination of strongly dominated actions, i.e. the elimination of all actions $a_i$ that are not a best response to any $a_{-i}\in \lambda^{k-1}(S)$.

The concept of $S,k$--rationalizable actions goes back to \citet{Bernheim1984Rationalizable}. It captures the idea of \emph{bounded rationality}, i.e. that players have limited computational power and can only simulate their own, as well as their opponents' best responses up to a finite number of $k$ levels. It was later verified that humans tend to play $k$-rationalizable actions, even in simple games \citep{Nagel1995Unraveling,Stahl1995Players}.

As we show next, starting from a connected set $S$, $\lambda^k(S)$ preserves connectedness for all $k$ in incave games. The proof of this result crucially relies on \Cref{thm:main}, as well as the outer semicontinuity of the best response maps, which is a consequence of the incavity of each $u_i$ with respect to $a_i$.

\begin{prop}
    \label{prop:connected_k_rationalizable}
    Let $S \subset \mathcal{A}$ be a connected set. Then the set of $S,k$-rationalizable actions $\lambda^k(S)$ is connected for all $k\in \N$.
\end{prop}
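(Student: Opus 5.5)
By induction on $k$, it suffices to show that $\lambda$ maps connected sets to connected sets. The case $k=0$ is the assumption on $S$; given that $\lambda^{k-1}(S)$ is connected, applying the one-step claim to it shows that $\lambda^k(S)$ is connected.

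Since a finite product of connected sets is connected, $\lambda(S)=\prod_{i=1}^n \cup_{a\in S}\BR_i(\pi_{-i}(a))$ is connected as soon as each factor $T_i:=\cup_{a\in S}\BR_i(\pi_{-i}(a))$ is. The projection $\pi_{-i}$ is continuous, so $S_{-i}:=\pi_{-i}(S)$ is connected, and $T_i=\BR_i(S_{-i})$. The remaining task is to show that the image of a connected set under $\BR_i$ is connected.

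Two properties of $\BR_i$ are needed. First, each value $\BR_i(a_{-i})$ is nonempty, compact and connected. The function $-u_i(\cdot,a_{-i})$ is invex, $\mathcal C^1$ and increasing at infinity, so \Cref{thm:main} makes its set of minima connected. Nonemptiness and compactness come from increasing at infinity: pick any $x$; the sublevel set at level $f(x)$ lies in the compact $K$, is closed, and is nonempty, so the minimum is attained. Second, $\BR_i$ is outer (upper) semicontinuous. For this I would need $u_i$ to be jointly continuous, which holds since incavity presupposes differentiability and I would take $u_i$ continuous in $a$ as in the paper's setting. I also need a locally uniform version of decreasing at infinity in $a_{-i}$ to keep best responses in a common compact set near a point $\bar a_{-i}$. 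I expect this step to be the main obstacle, since the definition of decreasing at infinity is pointwise in $a_{-i}$. I would try to argue directly: take $a_{-i}^k\to\bar a_{-i}$ and an open $W\supseteq \BR_i(\bar a_{-i})$, and suppose there are best responses $b^k\notin W$. If the $b^k$ stay bounded, a limit point $b$ satisfies $u_i(b,\bar a_{-i})\ge u_i(y,\bar a_{-i})$ for all $y$ by continuity, so $b\in \BR_i(\bar a_{-i})$, which contradicts $b\notin W$. Unboundedness must be excluded using the compact set from decreasing at infinity at $\bar a_{-i}$; if necessary I would adopt a uniform version of that assumption, which I believe is what the authors' outer semicontinuity claim implicitly uses.

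With these properties, the standard lemma applies: an upper semicontinuous map with nonempty connected values sends connected sets to connected sets. Suppose $T_i=\BR_i(S_{-i})$ were split into disjoint nonempty sets relatively open in $T_i$, written as $T_i\cap O_1$ and $T_i\cap O_2$ with $O_1,O_2$ open. Each value $\BR_i(a_{-i})$ is connected, so it lies entirely in one part. Hence $S_{-i}=D_1\cup D_2$ with $D_j=\{a_{-i}\in S_{-i}:\BR_i(a_{-i})\subseteq O_j\}$. These sets are disjoint and nonempty. Upper semicontinuity makes each $D_j$ relatively open in $S_{-i}$: if $\BR_i(\bar a_{-i})\subseteq O_j$, then $\BR_i(a_{-i})\subseteq O_j$ for nearby $a_{-i}$. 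This contradicts the connectedness of $S_{-i}$, so $T_i$ is connected, and the induction closes.
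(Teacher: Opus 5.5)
Your reduction matches the paper's: induction on $k$, the product structure of $\lambda$, continuity of $\pi_{-i}$, nonempty compact connected values of $\BR_i$ from \Cref{thm:main}, and outer semicontinuity to make the two pieces of a putative splitting pull back to relatively open subsets of $\pi_{-i}(S)$. The paper first proves that $\Gr(\BR_i|_{\pi_{-i}(S)})$ is connected and then projects; your direct argument on the image is equivalent and avoids the tube lemma.

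The gap is outer semicontinuity of $\BR_i$. Your sequential argument only covers bounded sequences of best responses. For the unbounded case you propose a uniform version of decreasing at infinity, which changes the hypotheses. Decreasing at infinity at $\bar a_{-i}$ alone cannot rule out maximizers of $u_i(\cdot,a^k_{-i})$ far away, because $u_i(z,a^k_{-i})\to u_i(z,\bar a_{-i})$ need not be uniform in $z$. Your guess that the paper implicitly assumes uniformity is also wrong. The pointwise assumption suffices, but it needs an idea you did not use.

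The missing idea is to use connectedness of superlevel sets at the perturbed points as a trap. Fix an open $W\supseteq\BR_i(\bar a_{-i})$. Choose $\epsilon>0$ with $N_\epsilon:=\{a_i: d(a_i,\BR_i(\bar a_{-i}))<\epsilon\}\subseteq W$, which is possible by compactness, and fix $\bar a_i\in\BR_i(\bar a_{-i})$.

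The boundary $\partial N_\epsilon$ is compact and consists of points at distance $\epsilon$ from $\BR_i(\bar a_{-i})$. Hence $u_i(\cdot,\bar a_{-i})\le u_i(\bar a_i,\bar a_{-i})-c$ there for some $c>0$. By joint continuity, uniform on this compact set (cf.\ \Cref{lem:uniformcont}), you get $u_i(a_i,a_{-i})<u_i(\bar a_i,a_{-i})$ for all $a_i\in\partial N_\epsilon$ once $a_{-i}$ is close to $\bar a_{-i}$.

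For each such $a_{-i}$, consider the superlevel set $\{a_i: u_i(a_i,a_{-i})\ge u_i(\bar a_i,a_{-i})\}$. It is connected by \Cref{thm:main} applied to $-u_i(\cdot,a_{-i})$, which is invex and increasing at infinity for this fixed $a_{-i}$. It contains $\bar a_i\in N_\epsilon$ and misses $\partial N_\epsilon$. It is therefore covered by the disjoint open sets $N_\epsilon$ and $\R^{n_i}\setminus\overline{N_\epsilon}$, and so lies in $N_\epsilon$. Since it contains $\BR_i(a_{-i})$, you get $\BR_i(a_{-i})\subseteq N_\epsilon\subseteq W$.

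This is the argument of \Cref{lem:osc}; it handles bounded and unbounded cases at once and uses decreasing at infinity only pointwise in $a_{-i}$. With it in place, the rest of your proof goes through as written.
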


Next, we define the set of $S$-\emph{rationalizable actions} $R(S)=\bigcap_{k=1}^\infty \lambda^k(S)$ as the set of actions, surviving arbitrarily many iterations of simultaneous elimination of strongly dominated actions. Rationalizability was independently introduced by \citet{Bernheim1984Rationalizable} and \citet{Pearce1984Rationalizable} and generalizes the concept of a Nash equilibrium.\footnote{In this work, we restrict to the set of so-called \emph{point rationalizable strategies}. We refer to \citet{Bernheim1984Rationalizable} for a full discussion.} Indeed, rationalizability only requires rationality---players want to maximize their utility---and common knowledge of rationality, i.e. players know that their opponents are rational and that they know that they know that their opponents are rational, etc. A Nash equilibrium additionally requires that the beliefs of the players are consistent with actual behavior, i.e. that the other players actually play the action $a_{-i}$, that player $i$ believes and best responds to.

As $\mathcal{A}$ is not compact $R(S)$ may in general be empty. We therefore make the additional assumption of \emph{strategic compactness} to show that $R(S)$ is non-empty, compact and connected.

\begin{definition}[Strategic Compactness]
    \label{def:strategic_compactness}
    An incave game is strategically compact, if there exists a compact set $K\subset \mathcal{A}$, such that $\lambda(K)\subseteq K$. 
\end{definition} 
Given the strategic compactness of an incave game with a compact set $K$, in combination with the connectedness of $\lambda^k(K)$ for all $k\in \N$, it follows that $R(K)$ is connected.
\begin{prop}
    \label{prop:connected_rationalizable}
    Consider an incave game that is strategically compact for a set $K$. Then $R(K)$ is non-empty, compact and connected.
\end{prop}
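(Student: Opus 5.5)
The plan is to realize $R(K)$ as the intersection of a nested sequence of nonempty, compact, connected sets and then invoke the standard continuum lemma: in a Hausdorff space, the intersection of a decreasing sequence of nonempty compact connected sets is nonempty, compact and connected.

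First, nestedness. Since $\lambda$ is monotone with respect to inclusion ($S\subseteq T$ implies $\lambda(S)\subseteq\lambda(T)$, directly from the union in its definition), strategic compactness $\lambda(K)\subseteq K$ gives by induction $\lambda^{k+1}(K)\subseteq\lambda^k(K)\subseteq K$ for all $k$.

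Second, nonemptiness and compactness of each $\lambda^k(K)$. For each $i$ and each $a_{-i}$, $\BR_i(a_{-i})$ is the set of maxima of $u_i(\cdot,a_{-i})$. This function is continuous and decreasing at infinity, so the set is nonempty and compact, and it is connected by \Cref{thm:main} applied to $-u_i(\cdot,a_{-i})$. I will show that for compact $S$ the set $\bigcup_{a\in S}\BR_i(\pi_{-i}(a))$ is compact, and I expect this to be the main technical step.

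\emph{Boundedness.} Fix $a\in S$ and a point $\hat a_i$. Decreasing at infinity gives a compact set outside of which $u_i<u_i(\hat a_i,\pi_{-i}(a))$. To make this uniform over $a$ in a neighborhood, I will use continuity of $u_i$ jointly in $(a_i,a_{-i})$ together with compactness of $S$. If the available decreasing-at-infinity assumption is not uniform in $a_{-i}$, I will instead rely on the inclusion $\lambda(K)\subseteq K$, which already gives boundedness whenever $S\subseteq K$; this is the only case we need.

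\emph{Closedness.} This follows from outer semicontinuity of $\BR_i$, i.e.\ closed graph, which is the standard Berge-type consequence of continuity of $u_i$: if $a^m\to a$ and $b^m\in\BR_i(\pi_{-i}(a^m))$ with $b^m\to b$, then $u_i(b^m,\cdot)\ge u_i(c,\cdot)$ for every $c$, and passing to the limit gives $b\in\BR_i(\pi_{-i}(a))$.

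A finite product of compact sets is compact, so each $\lambda^k(K)$ is nonempty and compact.

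Third, connectedness. \Cref{prop:connected_k_rationalizable} applies directly provided $K$ is connected. If $K$ is not assumed connected, I will replace it by a closed ball $\bar B\supseteq K$, or by the convex hull of $K$. This set is compact and connected, so $\lambda^k(\bar B)$ is connected. Because $\lambda(\bar B)$ is compact by the previous step, $\lambda^{j}(\bar B)$ shrinks into sets governed by $K$. I expect that reconciling $R(\bar B)$ with $R(K)$ may need care. Since the paper states $K$ arbitrary, the intended reading is probably that $K$ can be taken connected, for instance by taking its convex hull and checking invariance. Otherwise I will assume $K$ is connected, as the sentence preceding the proposition suggests.

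Finally, apply the lemma. Suppose $R(K)=A\cup B$ with $A,B$ disjoint, nonempty and closed. Choose disjoint open sets $U\supseteq A$ and $V\supseteq B$. The sets $\lambda^k(K)\setminus(U\cup V)$ are compact and nested; if all of them were nonempty, their intersection would be a nonempty subset of $R(K)$ lying outside $U\cup V$, which is impossible. Hence for some $k$ we have $\lambda^k(K)\subseteq U\cup V$, and $\lambda^k(K)$ meets both $U$ and $V$, since it contains $A$ and $B$. This contradicts the connectedness of $\lambda^k(K)$. Nonemptiness of $R(K)$ follows from Cantor's intersection theorem, and compactness holds because $R(K)$ is a closed subset of $K$.
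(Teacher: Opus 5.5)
Your proof is correct and follows essentially the paper's route: nesting from the monotonicity of $\lambda$ and $\lambda(K)\subseteq K$, compactness of each $\lambda^k(K)$ from the closed graph of $\BR_i$ plus boundedness inherited from $K$, connectedness from \Cref{prop:connected_k_rationalizable}, and the nested-intersection argument of \Cref{lem:finiteintersection}, which you reprove inline. You are right that connectedness of $K$ is needed; the paper's proof also uses it tacitly when it invokes \Cref{prop:connected_k_rationalizable}. So state it as a hypothesis and drop the ball/convex-hull detour, which cannot work because the claim fails for disconnected $K$: with $u_1=-\tfrac12(a_1-a_2)^2$ and $u_2=-\tfrac12(a_2-a_1)^2$, the set $K=\{0,1\}^2$ satisfies $\lambda(K)=K$, so $R(K)=K$ is four isolated points.
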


%% file: chapters/conclusion.tex
\section{Conclusion}
In this work, we illustrated that the topological mountain pass theorem is a powerful tool to study the optimization landscape of nonconvex functions. We proved that for invex functions satisfying a mild growth condition, namely increasing at infinity, all sublevel sets are connected. In particular, this covers the practically important class of functions satisfying the (generalized) Polyak-\L{}ojasiewicz inequality. We further explored how invexity leads to various connected solution concepts in different types of games.

Understanding the topological landscape of functions is an important step towards the design of novel and performant algorithms as illustrated in the recent works of \citet{Masiha2025Superquantile,Gong2025Poincare}. We thus believe future research can build upon our results to tackle various nonconvex problems and design algorithms that explicitly exploit this connectivity.

%% file: chapters/appendix.tex
\section{Proofs}
\label{app:proofs}

This appendix provides the remaining proofs. We first prove \Cref{thm:lilc} in \Cref{sec:proof:lilc}. We then establish the proposition-level results in the order they are used: the connectedness of $\underline{M}\cup \overline{M}$ in \Cref{sec:proof:stackelberg}, connectedness under the two-sided PL condition in \Cref{sec:proof:PLconnected}, connectedness in incave potential games in \Cref{sec:proof:incave_potential}, and connectedness of rationalizable sets in \Cref{sec:proof:connected_k_rationalizable,sec:proof:connected_rationalizable}. We also verify the examples from \Cref{fig:3d_plot_sine,ex:localPLconnected} in \Cref{sec:proof:ex:2PLfunction,sec:proof:ex:localPLconnected}.

\Cref{sec:auxiliaryresults} contains the statements and proofs of several auxiliary lemmas. Namely, \Cref{lem:addeq} on the interchangeability of Nash equilibria, \Cref{prop:eqproduct} on the product structure of $E$, \Cref{prop:EB} on connectedness under the EB property, \Cref{lem:uniformcont}, which is a variation of the Heine-Cantor theorem, \Cref{lem:osc} and \Cref{lem:isc} establishing outer and inner semicontinuity of best response maps, \Cref{lem:connectedBRgraph} on the connectedness of the best response graph, \Cref{lem:finiteintersection} that establishes the connected finite intersection property and \Cref{lem:PLgrowth} that shows $\alpha$-PL implies $\frac{\alpha}{\alpha-1}$ growth.

Appendix~\ref{app:Holder} then complements these proofs by showing how \Cref{thm:lilc} extends from local inner Lipschitz continuity to local inner Hölder continuity under stronger regularity of $f$.

\subsection{Proof of \Cref{thm:lilc}}
\label{sec:proof:lilc}
\begin{proof}
    WLOG, we focus on $\BR_y(x)$ being \lilc\ to show that $\underline{M}\subseteq E$. The converse, i.e. $E\subseteq \underline{M}$ holds by \Cref{def:solution}. The proof for $\BR_x(y)$ and $\overline{M}$ follows analogously, which in combination with \Cref{prop:stackelberg} proves $E$ is connected.

    Suppose for contradiction there exists $(x,y)\in \underline{M}\setminus E$.
    Since $(x,y)\in \underline{M}\setminus E$, it follows by the invex-incave structure of $f$ that $\nabla_y f(x,y)=0$, but $\nabla_x f(x,y)\neq 0$.
    By assumption, $\BR_y(x)$ is \lilc\ at $x$ for $y$ in some neighborhood $N_x$.
    Consider $\delta'$ small enough, such that for all $\delta \in [0,\delta']$, $x_\delta:=x- \delta \nabla_x f(x,y) \in N_x$. Given that $\BR_y(x_{\delta})$ is compact, there exists a point $y_{\delta} \in \BR_y(x_\delta)$, such that $\|y-y_{\delta}\|=d (y,\BR_y(x_{\delta}))$. In particular, using the local inner Lipschitz continuity and the definition of $x_\delta$, it holds that:
    \begin{equation*}
       \|y-y_{\delta}\| =d (y,\BR_y(x_{\delta})) \leq \kappa \|x-x_{\delta}\| = \kappa \delta \|\nabla_x f(x,y)\|.
    \end{equation*} 

    This leads us to the following inequality for all $\delta \in [0,\delta']$:
    \begin{align*}
    F(x_\delta) = &f(x_\delta,y_\delta) \\
        =& f(x,y) + \nabla_x f(x,y)^T(x_\delta-x) + \nabla_y f(x,y)^T(y_\delta-y) \\
        &+ o(\|x_\delta-x\| + \|y_\delta-y\|)\\
        =& f(x,y) + \nabla_x f(x,y)^T(x_\delta-x) + o(\|x_\delta-x\| )\\ 
        =& F(x) - \delta \|\nabla_x f(x,y)\|^2 + o(\delta \|\nabla_x f(x,y)\|),
    \end{align*} 
    where the first equality follows from the definition of $F$ and the fact that $y_\delta\in \BR_y(x_\delta)$, the second equality follows from the differentiability of $f$, the third equality follows from the invexity of $f(x,\cdot)$ and the fact that $y\in \BR_y(x)$ and the local Lipschitz continuity of $\BR_y$, and the last equality uses that $(x,y)\in \underline{M}$ (and thus $F(x)=f(x,y)$) , and the definition of $x_\delta$.
    By Assumption $\nabla_x f(x,y)\neq 0$, which implies for small enough $\delta$, $F(x_\delta)<F(x)$, a contradiction to the supposition that $x\in X$. Therefore, $\underline{M}\subseteq E$ and $E$ is connected.
\end{proof}

\subsection{Proof of \Cref{prop:stackelberg}}

\label{sec:proof:stackelberg}
\begin{proof}

   As $E\neq \emptyset$, let $(x^*,y^*)\in E$. Moreover, since $E=\underline{M}\cap \overline{M}$, we know that $\underline{M}\cup\overline{M}$ is non-empty and contains $(x^*,y^*)$. We will now show that any point in $\underline{M}\cup\overline{M}$ lies in the same connected component as $(x^*,y^*)$, which implies $\underline{M}\cup\overline{M}$ is connected.

   Take any point $(x,y)\in \underline{M}\cup\overline{M}$ and WLOG assume $(x,y)\in \underline{M}$ (the proof works analogously for $\overline{M}$). By \Cref{prop:eqproduct}, it follows that $(x,y^*)\in E$ and that $x$ and $x^*$ both belong to $\BR_x(y^*)$. We first claim that $(x,y^*)$ is in the same connected component as $(x^*,y^*)$. Assume this were not the case, then there exist disjoint, nonempty, open sets $A,B$, such that $(x,y^*)\in A$, $(x^*,y^*)\in B$, and $\underline{M}\cup\overline{M}=A\cup B$. However, we know that $y^*\in Y$ and thus $\BR_x(y^*)\times \{y^*\} \subset \underline{M}\cup\overline{M}$. Therefore, it follows that $A_{y^*}=A \cap \left(\BR_x(y^*)\times \{y^*\}\right)$ and $B_{y^*}=B \cap \left(\BR_x(y^*)\times \{y^*\}\right)$ are disjoint, nonempty, open sets whose union is $\BR_x(y^*)\times \{y^*\}$. However, the latter is a connected set by \Cref{thm:main}, a contradiction.

   Next, we show that $(x,y)$ is in the same connected component as $(x,y^*)$. The proof works analogously. We know that $x\in X$ and thus $\{x\} \times \BR_y(x)\subset \underline{M}\cup\overline{M}$. If $(x,y)$ were not in the same connected component as $(x,y^*)$, then there exist disjoint, nonempty, open sets $A,B$, such that $(x,y)\in A$ and $(x,y^*)\in B$ and we can define disjoint, nonempty, open sets $A_{x}=A \cap \left(\{x\} \times \BR_y(x)\right)$ and $B_{x}=B \cap \left(\{x\} \times \BR_y(x)\right)$. However, $\{x\} \times \BR_y(x)$ is connected, a contradiction.
\end{proof}

\subsection{Proof of \Cref{prop:PLconnected}}
\label{sec:proof:PLconnected}
\begin{proof}
    From \Cref{def:2PL}, it follows that $f$ satisfying the two-sided PL function implies that it is invex-incave. Moreover, \Cref{lem:PLgrowth} in \Cref{app:proofs} in combination with the bounded sets of minima and maxima implies that $f(\cdot,y)$ is increasing at infinity for all $y$ and that $f(x,\cdot)$ is decreasing at infinity for all $x$. Next, we show that $f$ globally satisfies the EB property in $x$. The proof works equivalently for the $y$ variable. 
    Combining the PL condition with the result from \Cref{lem:PLgrowth}, we have for all $(x,y)\in \R^{n_x}\times \R^{n_y}$:
    \begin{align*}
        \|\nabla_x f(x,y) \|^2 &\underset{\mathclap{(\text{PL})}}{\geq} 2 \mu_1\left[f(x, y)-\min _x f(x, y)\right] \\
        & \underset{\mathclap{(\text{Lem. }\ref{lem:PLgrowth})}}{\geq} \mu_1^2 d(x,\BR_x(y))^2.
    \end{align*}
    It follows that $f$ satisfies the EB property globally and thus by \Cref{prop:EB}, $E$ is connected.
\end{proof}

\subsection{Proof of \Cref{prop:incave_potential}}
\label{sec:proof:incave_potential}
\begin{proof}
    The existence follows as $P$ is continuous and decreasing at infinity and thus attains its global maximum. The connectedness follows from \Cref{thm:main} since the set of Nash equilibria is equivalent to the set of maxima of $P$.
\end{proof}
\subsection{Proof of \Cref{prop:connected_k_rationalizable}}
\label{sec:proof:connected_k_rationalizable}
\begin{proof}
    We show that for any connected set $S$, $\lambda(S)$ is connected. The proof for general $k$ then follows by induction. \Cref{lem:connectedBRgraph} shows that the graphs $\Gr(\BR_i|_{\pi_{-i}(S)})$ are connected. Since the projection of $\Gr(\BR_i|_{\pi_{-i}(S)})$ onto the image of $\BR_i|_{\pi_{-i}(S)}$ is a continuous surjective function, it follows that the sets $\cup_{a \in S}\BR_i(\pi_{-i}(a))$ are connected for all $i$ and therefore so is $\lambda(S)$, being the finite product of these sets.
\end{proof}

\subsection{Proof of \Cref{prop:connected_rationalizable}}
\label{sec:proof:connected_rationalizable}
\begin{proof}
    
   Since $\BR_i$ is outer semicontinuous (cf. \Cref{lem:osc}), it follows that its image $\BR_i(\pi_{-i}(K))$ is closed (cf. \citet{Dontchev2014Implicit}) and compact (since $K$ is compact and $\lambda(K) \subseteq K$ is bounded.) and therefore so is $\lambda(K)$ and by induction all $\lambda^k(S)$. From \Cref{prop:connected_k_rationalizable} we know that $\lambda^k(K)$ is connected for all $k\in \N$. Furthermore, we note that $\lambda$ respects set inclusion, i.e. $A \subset B \implies \lambda(A)\subset \lambda(B)$. Therefore, $\lambda^{k+1}(K)\subseteq \lambda^{k}(K)$ for all $k \in \N$. As $\lambda(K)\subseteq \R^{\sum_i^n n_i}$ is compact and thus normal, it follows from \Cref{lem:finiteintersection} that the intersection $R(K)=\bigcap_{k=1}^\infty \lambda^k(K)$ is non-empty, compact and connected.
\end{proof}

\subsection{Proof for Claim in  \Cref{fig:3d_plot_sine}}
\label{sec:proof:ex:2PLfunction}
For $x,y \in \R$, define $a = \max\left(|x| - 1, 0\right)$ and $b = \max\left(|y| - 1, 0\right)$ the function is given as:
    \[
        f(x,y)= a^2 +3 \sin^2(b)\sin^2(a) - 4b^2-10\sin^2(b),
    \]

We need to verify the assumptions of \Cref{prop:PLconnected}, i.e. $f$ is $\mathcal{C}^1$, two-sided PL, Lipschitz gradients, and bounded sets of minima and maxima.

\paragraph{Bounded Sets of Minima and Maxima:} It is easy to check that for any $x,y$, $\BR_x(y)=[-1,1]$ and $\BR_y(x)=[-1,1]$. Therefore, the sets of minima and maxima are bounded.

\paragraph{Lipschitz Gradients and $\mathcal{C}^1$:} The derivatives of $f$ are given by:
\begin{align*}
    \nabla_x f(x,y) = & \left(2a + 3\sin(2a)\sin^2(b)\right) \sgn(x)\\
    \nabla_y f(x,y) = & \left(-8b + 3 \sin^2(a)\cos(2b) - 10\sin(2b)\right) \sgn(y)
\end{align*}
As both partial derivatives exist and are continuous, $f$ is $\mathcal{C}^1$. To show Lipschitz continuity, we compute the second derivatives:
\begin{align*}
    |\nabla_{xx} f(x,y)| = & \left|2 + 6\sin(2a)\sin^2(b)\right| \\ 
    |\nabla_{yy} f(x,y)| = & \left|-8 + 6\sin^2(a)\cos(2b) - 20\cos(2b)\right|\\
    |\nabla_{xy} f(x,y)| = & \left|3 \sin(2a)\sin(2b)\right|\\
\end{align*}
All second derivatives are bounded by $28$ and exist with the exception of the null set $\{(x,y): |x|=1 \text{ or } |y|=1\}$. Therefore, $\nabla f$ is Lipschitz continuous.

\paragraph{Two-sided PL:} We need to show that $f$ satisfies the two-sided PL condition. 
We start by verifying the condition for \( f(\cdot,y) \) given any \( y \).
Let \( \mu_1 = \frac{1}{32} \). 
We need to show
\[
\|\nabla_x f(x,y)\|^2 \geq 2 \mu_1 \left(f(x,y) - \min_{x'} f(x',y)\right).
\]
First, note
\[
f(x,y) - \min_{x'} f(x',y) = a^2 + 3 \sin^2(a)\sin^2(b) \leq 4a^2.
\]
Furthermore,
\[
\|\nabla_x f(x,y)\|^2 = \left(2a + 3\sin(2a)\sin^2(b)\right)^2.
\]
We claim
\[
2a + 3\sin(2a)\sin^2(b) \geq a/2.
\]
Note that \( \sin^2(b) \in [0,1] \) and the lowest value will be achieved for \( \sin^2(b) = 1 \). Reparameterizing with \( t = 2a \), we get:
\[
t + 3\sin(t) \geq \frac{t}{4} \quad \Longleftrightarrow \quad \sin(t) \geq -\frac{t}{4},
\]
which can easily be verified.
We therefore conclude
\[
\|\nabla_x f(x,y)\|^2 \geq \frac{a^2}{4} \geq \frac{1}{32}\cdot 4a^2 \geq 2\mu_1\cdot \left(f(x,y) - \min_{x'} f(x',y)\right).
\]
Next, we verify the condition for \( f(x,\cdot) \) given any \( x \). We have that
\begin{align*}
\max_y f(x,y) - f(x,y) &= -3\sin^2(a)\sin^2(b) + 4b^2 + 10\sin^2(b)\\
&\leq 4b^2 + 10\sin^2(b)\\
&\leq 14b^2.
\end{align*}
At the same time we have
\begin{align*}
\|\nabla_y f(x,y)\|^2 = \left(-8b + 3\sin^2(a)\cos(2b) + 10\sin(2b)\right)^2 \geq 4b^2.
\end{align*}
We thus get for $\mu_1 = \frac{1}{7}$ that for all $x,y$:
\begin{align*}
\|\nabla_y f(x,y)\|^2 \geq 4b^2 \geq 2 \cdot \frac{2}{14} \cdot 14b^2 \geq 2\mu_1\left(\max_x f(x,y) - f(x,y)\right).
\end{align*}

\subsection{Proof for Claim in \Cref{ex:localPLconnected}}
\label{sec:proof:ex:localPLconnected}
 For $x,y \in \R$, define $a = \max\left(|x| - 1, 0\right)$ and $b = \max\left(|y| - 1, 0\right)$. We need to show the function 
    \[
        f(x,y)= a^2 \exp\left(-b^2\right) - b^2,
    \]
satisfies \Cref{assum:localPL}, is invex-incave, increasing at infinity in $x$, and decreasing at infinity in $y$. 

\paragraph{Invex-Incave and Increasing/Decreasing at Infinity:} 
Note that $\BR_x(y)=[-1,1]$ and $\BR_y(x)=[-1,1]$ for all $x,y\in \R$. As $f$ is strictly increasing in $|x|$ for $|x|>1$ and strictly decreasing in $|y|$ for $|y|>1$, it follows that it is invex-incave and increasing at infinity in $x$ and decreasing at infinity in $y$.

\paragraph{$\mathcal{C}^1$ and Lipschitz Gradients:} The derivatives of $f$ are given by:
\begin{align*}
    \nabla_x f(x,y) = & 2a \exp\left(-b^2\right) \sgn(x)\\
    \nabla_y f(x,y) = & \left(-2b a^2 \exp\left(-b^2\right) - 2b \right)\sgn(y)= -2b\left(a^2 \exp\left(-b^2\right) + 1\right)\sgn(y).
\end{align*}
As both partial derivatives exist and are continuous, $f$ is $\mathcal{C}^1$. To show Lipschitz continuity, we compute the second derivatives:
\begin{align*}
    |\nabla_{xx} f(x,y)| = & 2 \exp\left(-b^2\right) \\
    |\nabla_{yy} f(x,y)| = & \left|2a^2 \exp\left(-b^2\right) \left(2b^2 - 1\right) -2\right|\\
    |\nabla_{xy} f(x,y)| = & 4ab \exp\left(-b^2\right).
    \end{align*}

For \Cref{assum:localPL}, we only need to show Lipschitz continuity of the gradients in a neighborhood of $\underline{M}\cup \overline{M}\subseteq[-1,1]\times[-1,1]$. We choose the neighborhood $N=[-2,2]\times[-2,2]$ and note that on $N$, $a\in [0,1]$ and $b\in [0,1]$. The fact that both $a$ and $b$ are upper bounded by $1$ will be used often throughout the proof. As the second derivatives exist on $N$, except on a set with Lebesgue measure zero, and are all bounded by $4$, it follows that $\nabla f$ is Lipschitz continuous on $N$.

\paragraph{Local PL Condition:} We show that $f(\cdot,y)$ satisfies the two sided PL condition on $N$ uniformly with $\alpha_y=2$ and $\mu_y=\frac{4}{e}$. We have that:
\[
\|\nabla_x f(x,y)|^2 = 4a^2 \exp\left(-2b^2\right)
\]
and
\[
f(x,y) - \min_{x'} f(x',y) = a^2 \exp\left(-b^2\right) .
\]
It follows that:
\begin{align*}
    \|\nabla_x f(x,y)\|^2 = & 4a^2 \exp\left(-2b^2\right) \\
    \geq & 4a^2 \exp\left(-b^2\right) \frac{1}{e}\\
    = & 2\mu_y\left(f(x,y) - \min_{x'} f(x',y)\right).
\end{align*}
Next, we show that $f(x,\cdot)$ satisfies the PL condition uniformly on $N$ with $\alpha_x=2$ and $\mu_x=1$. We have that:
\begin{align*}
    \|\nabla_y f(x,y)\|^2 = & 4b^2\left(a^2 \exp\left(-b^2\right) + 1\right)^2\\
    \end{align*}
and
\[
\max_y f(x,y) - f(x,y) = a^2 (1-\exp\left(-b^2\right)) + b^2.
\]
Using $b^2 \geq 1- \exp\left(-b^2\right)$, it follows that:
\begin{align*}
    \|\nabla_y f(x,y)\|^2 = & 4b^2\left(a^2 \exp\left(-b^2\right) + 1 \right)^2\\
    \geq & 4b^2 + 4b^2a^2 \frac{1}{e}\\
    \geq & a^2b^2 + b^2
    \geq a^2(1-\exp\left(-b^2\right)) + b^2\\
    = & 2\mu_x\left(\max_y f(x,y) - f(x,y)\right).
\end{align*}
\paragraph{Quadratic Growth:} Finally, we show that $f(\cdot,y)$ and $f(x,\cdot)$ satisfy the quadratic growth condition on $N$. Let us start with $f(\cdot,y)$. We have that:
\[
 f(x,y) - \min_{x'} f(x',y)  = a^2 \exp\left(-b^2\right) 
\]
and 
\[
d(x,\BR_x(y)) = a.
\]
Setting $\eta_y = \frac{1}{e}$, we conclude that:
\begin{align*}
    f(x,y) - \min_{x'} f(x',y) = & a^2 \exp\left(-b^2\right) \\
    \geq & \frac{1}{e} a^2 \\
    = & \eta_y d(x,\BR_x(y))^2.
\end{align*}
Similarly, setting $\eta_x = 1$, we have that:
\begin{align*}
    \max_{y'} f(x,y') - f(x,y) = & a^2 (1-\exp\left(-b^2\right)) + b^2\\
    \geq & b^2\\
    = & \eta_x d(y,\BR_y(x))^2.
\end{align*}
and thus $f$ satisfies all assumptions made.

\subsection{Auxiliary Results}
\label{sec:auxiliaryresults}

\begin{lemma}[Interchangeability Lemma,\citep{Nash1951Non}]
    \label{lem:addeq}
        Let $(x,y),(x',y')\in E$, then mixing the strategies yields two additional equilibria, i.e. $(x,y'),(x',y)\in E$.
    \end{lemma}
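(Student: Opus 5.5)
The plan is to argue directly from the saddle-point inequalities in \Cref{def:solution}. No structure of $f$ beyond the definition of $E$ is needed; invexity, incavity and the growth conditions play no role here. Fix $(x,y),(x',y')\in E$ and write out the four inequalities that define them:
\begin{align*}
f(x,\tilde y)\le f(x,y)\le f(\tilde x,y) \quad \text{and} \quad f(x',\tilde y)\le f(x',y')\le f(\tilde x,y') \qquad \text{for all } \tilde x,\tilde y.
\end{align*}

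The first step is to show that both saddle points share the same value, $f(x,y)=f(x',y')$. I would compare each of them with the mixed point $(x,y')$. Taking $\tilde y=y'$ in the first chain gives $f(x,y')\le f(x,y)$, and taking $\tilde x=x$ in the second chain gives $f(x',y')\le f(x,y')$. Applying the same argument to the other mixed point $(x',y)$ gives $f(x,y)\le f(x',y)\le f(x',y')$. Combining these, all four numbers $f(x,y)$, $f(x,y')$, $f(x',y)$ and $f(x',y')$ are equal; call the common value $v$.

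The second step is to verify the saddle inequalities at $(x,y')$. For any $\tilde y$, the first chain gives $f(x,\tilde y)\le f(x,y)=v=f(x,y')$. For any $\tilde x$, the second chain gives $f(x,y')=v=f(x',y')\le f(\tilde x,y')$. Hence $(x,y')\in E$. The symmetric argument, with the roles of the two points swapped, shows $(x',y)\in E$.

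I do not expect a real obstacle. The only care required is in the bookkeeping of the first step: one has to pick the correct instantiation of $\tilde x$ and $\tilde y$ in each chain so that the inequalities close into a cycle and force equality of the values. Once all four values are known to coincide, the second step is immediate.
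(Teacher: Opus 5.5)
Your proof is correct and takes essentially the same route as the paper. You use the same four saddle inequalities, closed into the cycle $f(x,y)\le f(x',y)\le f(x',y')\le f(x,y')\le f(x,y)$ to force the values to be equal, and then read off the saddle conditions at the mixed points; your final step checks both saddle inequalities at $(x,y')$ and $(x',y)$ slightly more explicitly than the paper does.
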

    \begin{figure}[H]
        \centering
        \begin{tikzpicture}
            \coordinate (A) at (0,2);
            \coordinate (B) at (2,2);
            \coordinate (C) at (0,0);
            \coordinate (D) at (2,0);

            \node[fill, circle, inner sep=1pt, label=left:{$(x, y')$}] (A) at (A) {};
            \node[fill, circle, inner sep=1pt, label=right:{$(x', y')$}] (B) at (B) {};
            \node[fill, circle, inner sep=1pt, label=left:{$(x, y)$}] (C) at (C) {};
            \node[fill, circle, inner sep=1pt, label=right:{$(x', y)$}] (D) at (D) {};
    
            \draw (A) -- (B);
            \draw (A) -- (C);
            \draw (B) -- (D);
            \draw (C) -- (D);
    
        \end{tikzpicture}
        \caption{Mixing coordinates creates new equilibria.}
        \label{fig:equilibrium1d}
    \end{figure}
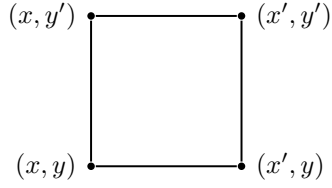
\begin{proof}[Proof]

        See \Cref{fig:equilibrium1d} for the intuition of the proof.
            Repeatedly applying the equilibrium definition, we get the following:
            \[
            f(x,y)\underset{x \in \BR_x(y)}{\leq} f(x',y) \underset{y' \in \BR_y(x')}{\leq} f(x',y').
            \]
            At the same time it holds that:
                \[
            f(x',y')\underset{x' \in \BR_x(y')}{\leq} f(x,y') \underset{y \in \BR_y(x)}{\leq} f(x,y).
            \]
            We thus conclude
            \[
            f(x,y)=f(x',y)=f(x,y')=f(x',y').
            \]
        In particular, since $(x,y),(x',y')\in E$ and $f(x,y)=f(x',y)=f(x',y') $, $x'$ must be a best response to $y$ and vice versa. Therefore $(x',y) \in E$. The argument holds equivalently for $(x,y')$.

\end{proof}

\begin{lemma}[Product Structure of Saddle Points, \citep{Bertsekas2014Convex,Rockafellar1970Convex}]
    \label{prop:eqproduct}
    If E is non-empty, then $E=X\times Y$ is the product of the primal and dual optimal set.
\end{lemma}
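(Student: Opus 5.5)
We prove the two inclusions $E\subseteq X\times Y$ and $X\times Y\subseteq E$ directly from \Cref{def:solution}, using only that $E\neq\emptyset$. The key intermediate fact is that a nonempty $E$ forces the minimax and maximin values to coincide. Fix $(\bar x,\bar y)\in E$. Then
\[
F(\bar x)=\max_y f(\bar x,y)=f(\bar x,\bar y)=\min_x f(x,\bar y)=G(\bar y).
\]
By weak duality we always have $G(y)\le f(x,y)\le F(x)$ for all $x,y$. Hence for every $x,y$,
\[
G(y)\le F(\bar x)=G(\bar y)\le F(x),
\]
so $\bar x$ minimizes $F$ and $\bar y$ maximizes $G$, and the common optimal value is $v:=f(\bar x,\bar y)$. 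This already gives $E\subseteq X\times Y$, since the argument applies to an arbitrary element of $E$.

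For the reverse inclusion, take $x\in X$ and $y\in Y$. Then $F(x)=\min F=v$ and $G(y)=\max G=v$. Weak duality at the pair $(x,y)$ gives
\[
v=G(y)\le f(x,y)\le F(x)=v,
\]
so $f(x,y)=v$. It follows that $f(x,y')\le F(x)=v=f(x,y)$ for all $y'$, and $f(x',y)\ge G(y)=v=f(x,y)$ for all $x'$. These are exactly the saddle point inequalities, so $(x,y)\in E$.

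One point needs care: the maxima in $F$ and minima in $G$ must actually be attained, since otherwise $F,G$ are only sup and inf. Under \Cref{assum:minmaxinvex} this holds, because $f(x,\cdot)$ is continuous and decreasing at infinity and $f(\cdot,y)$ is increasing at infinity. The argument above in fact only needs sup/inf, together with the observation that the optimal values are achieved at $\bar x,\bar y$. No step is a real obstacle. Alternatively, one could invoke \Cref{lem:addeq}, but the direct weak-duality argument is cleaner.
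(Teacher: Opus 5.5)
Your proof is correct, and its reverse inclusion is the paper's sandwich argument $v=G(y)\le f(x,y)\le F(x)=v$. For the forward inclusion, the paper first uses the interchangeability lemma to get a product form $E=A\times B$ and then notes that saddle points are minimax and maximin points. Your pointwise weak-duality argument shows that detour is unnecessary, and it also proves two facts the paper only asserts: that the minimax and maximin values coincide, and that $\min F$ and $\max G$ are attained.
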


\begin{proof}
    \Cref{lem:addeq} in \Cref{app:proofs} proves that $E=A\times B$ for some sets $A\subset \R^{n_x}, B\subset \R^{n_y}$. As every Nash equilibrium is also a minimax, respectively maximin point, it follows that $E= A\times B \subset X \times Y$.
    
    To show the converse, take any $(x^*,y^*) \in X \times Y$. As $E$ is non-empty, it holds that $\min_x \max_y f(x,y)=\max_y \min_x f(x,y)=f^*$ and therefore
    \[
    f^*=\min_x f(x,y^*)\leq f(x^*,y^*)\leq \max_y f(x^*,y)=f^*.
    \]
    We conclude $(x^*,y^*) \in E$ and thus $E= X\times Y$.
\end{proof}

\begin{definition}[Local Error Bound]
$f$ satisfies the local error bound (EB) at a point $(\overline{x},\overline{y})$, if there exists constants $\nu_{\overline{x}}>0$ and $\nu_{\overline{y}}>0$, as well as neighborhoods $N_{\overline{x}}$ of $\overline{x}$ and $N_{\overline{y}}$ of $\overline{y}$, such that:
    \begin{align*}
        \forall x \in N_{\overline{x}} \forall y \in N_{\overline{y}}: &\nu_{\overline{x}} \|\nabla_x f(x,\overline{y})\| \geq d(x,\BR(\overline{y}))\\
        &\nu_{\overline{y}}\|\nabla_y f(\overline{x},y)\| \geq  d(y,\BR(\overline{x})).
    \end{align*}
\end{definition}

\begin{lemma}
    \label{prop:EB}
    Assume $f$ satisfies the local $EB$ property for all $(\overline{x},\overline{y})\in \underline{M} \cup \overline{M}$. Further, assume that $\nabla_y f(\cdot,\overline{y})$ is locally $L_{\overline{y}}$-Lipschitz continuous in $x$ at all $(\overline{x},\overline{y})\in \underline{M}$ and that $\nabla_x f(\overline{x},\cdot)$ is locally $L_{\overline{x}}$-Lipschitz continuous in $y$ at all $(\overline{x},\overline{y})\in \overline{M}$. Then $E=\underline{M}=\overline{M}$ and this common set is connected.
\end{lemma}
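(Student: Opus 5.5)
The plan is to reduce this statement to \Cref{thm:lilc}. That is, for every $(\overline{x},\overline{y})\in\underline{M}$ I will show that $\BR_y(\cdot)$ is \lilc\ at $\overline{x}$ for $\overline{y}$, and symmetrically for $\BR_x(\cdot)$ at points of $\overline{M}$. \Cref{thm:lilc} then gives $E=\underline{M}=\overline{M}$, and connectedness follows from \Cref{prop:stackelberg}, using that $E\neq\emptyset$ whenever the sets are nonempty. So the work lies in converting the EB inequality plus Lipschitz continuity of the partial gradients into inner Lipschitz continuity of the best-response maps.

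Fix $(\overline{x},\overline{y})\in\underline{M}$, so $\overline{y}\in\BR_y(\overline{x})$ and hence $\nabla_y f(\overline{x},\overline{y})=0$. I read the EB condition at this point in the form needed here: for $x$ in a neighbourhood $N_{\overline{x}}$, the bound is applied at the fixed point $\overline{y}$ with the map $\BR_y(x)$. Concretely, I use
\[
d(\overline{y},\BR_y(x))\le \nu\,\|\nabla_y f(x,\overline{y})\| .
\]
If the stated EB has its roles fixed differently, I will apply it at the base point $(x,\overline{y})$ for $x$ near $\overline{x}$, which requires the constants to be uniform nearby. This uniformity is the point I expect to be the main obstacle, since the EB is only assumed at points of $\underline{M}\cup\overline{M}$. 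My first attempt will be to interpret the EB in the uniform-neighbourhood sense the definition suggests, namely with quantifiers over $x\in N_{\overline{x}}$. For the PL application, \Cref{prop:PLconnected} in fact supplies the EB globally, so this causes no difficulty there.

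Given this, the local Lipschitz continuity of $\nabla_y f(\cdot,\overline{y})$ together with $\nabla_y f(\overline{x},\overline{y})=0$ yields
\[
d(\overline{y},\BR_y(x))\le \nu\,\|\nabla_y f(x,\overline{y})-\nabla_y f(\overline{x},\overline{y})\|\le \nu L_{\overline{y}}\|x-\overline{x}\|
\]
on a neighbourhood of $\overline{x}$. This is exactly \Cref{def:lilc} with $\kappa=\nu L_{\overline{y}}$.

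The argument for $(\overline{x},\overline{y})\in\overline{M}$ is symmetric: use $\nabla_x f(\overline{x},\overline{y})=0$, which holds by invexity of $f(\cdot,\overline{y})$ because $\overline{x}\in\BR_x(\overline{y})$, together with the EB in $x$ and the Lipschitz continuity of $\nabla_x f(\overline{x},\cdot)$. Both hypotheses of \Cref{thm:lilc} are then verified, and the conclusion follows.
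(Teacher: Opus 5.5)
Your proposal is correct and follows essentially the same route as the paper. The paper's proof also fixes $(\overline{x},\overline{y})\in\underline{M}$ and chains $d(\overline{y},\BR_y(x))\le\nu_{\overline{y}}\|\nabla_y f(x,\overline{y})\|=\nu_{\overline{y}}\|\nabla_y f(x,\overline{y})-\nabla_y f(\overline{x},\overline{y})\|\le\nu_{\overline{y}}L_{\overline{y}}\|x-\overline{x}\|$ to obtain local inner Lipschitz continuity with $\kappa=\nu_{\overline{y}}L_{\overline{y}}$. It then argues symmetrically for $\overline{M}$ and invokes \Cref{thm:lilc}.

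The quantifier subtlety you flag is genuine, and the paper's proof makes the same reading of the EB without comment. That reading is an error bound for $f(x,\cdot)$ at the point $\overline{y}$, holding uniformly for $x$ near $\overline{x}$, rather than the literal definition with $\overline{x}$ held fixed. So your stated interpretation matches what the paper actually uses.
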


\begin{proof}
    We show that $f$ locally satisfying the local EB property and $\nabla_y f(\cdot,\overline{y})$ being locally $L_{\overline{y}}$-Lipschitz continuous imply that $\BR_y(x)$ is \lilc\ and thus $\underline{M}\subseteq E$. The proof for $\overline{M}$ follows analogously, which implies that $E=\underline{M}=\overline{M}$ and thus the connectedness of this common set. The local Lipschitz continuity of $\nabla_y f(\cdot,\overline{y})$ at $(\overline{x},\overline{y})$ holds for a constant $L_{\overline{y}}$ and a neighborhood $N_{\overline{x}}^{\text{L}}$ of $\overline{x}$, while the local EB property holds for a constant $\nu_{\overline{y}}$ and a neighborhood $N_{\overline{x}}^{\text{EB}}\times N_{\overline{y}}^{\text{EB}}$ of $(\overline{x},\overline{y})$. Define $N_{\overline{x}}:=N_{\overline{x}}^{\text{EB}}\cap N_{\overline{x}}^{\text{L}}$ and $\kappa:= \nu_{\overline{y}} L_{\overline{y}}$. Then, it holds for all $x\in N_{\overline{x}}$:
    \begin{align*}
        d(\overline{y},\BR_y(x)) &\underset{\mathclap{(\text{EB})}}{\leq} \nu_{\overline{y}} \|\nabla_y f(x,\overline{y})\| \\
        &= \nu_{\overline{y}} \|\nabla_y f(x,\overline{y})-\nabla_y f(\overline{x},\overline{y})\| \\
        & \underset{\mathclap{\text{(Lip.)}}}{\leq} \nu_{\overline{y}} L_{\overline{y}} \|x-\overline{x}\| \\
        & = \kappa \|x-\overline{x}\|,
    \end{align*}
    where the first equality holds because $\overline{y}\in \BR_y(\overline{x})$ and $f(\overline{x},\cdot)$ is incave, which implies $\nabla_y f(\overline{x},\overline{y})=0$.
   We conclude that $\BR_y(x)$ is \lilc\ at $\overline{x}$ for $\overline{y}$ for all $(\overline{x},\overline{y})\in \underline{M}$ and thus $\underline{M}\subseteq E$ and $E$ is connected.
\end{proof}

The Lemma below is related to the well-known Heine-Cantor theorem~\citep{Heine1872Die}.
\begin{lemma}[Uniform Continuity on Compact Set]
    \label{lem:uniformcont}
    Let $f:\R^{n_x}\times \R^{n_y} \to \R, (x,y)\mapsto f(x,y)$ be a continuous function. Let $K\subset \R^{n_y}$ be compact. Then $f(\cdot,y)$ is uniformly continuous on $K$ with respect to $y$ , i.e. for all $\overline{x} \in \R^{n_x}$ and $\epsilon>0$, there exists a $\delta>0$, such that for all $y\in K$ and for all $x\in \mathbb{B}_\delta(\overline{x})$:
    \[
    |f(x,y)-f(\overline{x},y)|<\epsilon,
    \]
    where $\mathbb{B}_\delta(\overline{x})\subset \R^{n_x}$ denotes the open ball of radius $\delta$ around $\overline{x}$.
\end{lemma}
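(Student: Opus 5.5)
The plan is a Heine--Cantor type compactness argument: use continuity of $f$ on a compact neighborhood of $\{\overline{x}\}\times K$, and reduce to the standard fact that a continuous function on a compact set is uniformly continuous.

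Fix $\overline{x}\in\R^{n_x}$ and $\epsilon>0$. Let $C:=\overline{\mathbb{B}_1(\overline{x})}\times K\subset\R^{n_x}\times\R^{n_y}$. This set is compact as a product of compact sets. The restriction of $f$ to $C$ is continuous, so by the classical Heine--Cantor theorem it is uniformly continuous on $C$. Hence there is $\delta_0>0$ such that for all $(x,y),(x',y')\in C$ with $\|(x,y)-(x',y')\|<\delta_0$,
\[
|f(x,y)-f(x',y')|<\epsilon .
\]

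Set $\delta:=\min\{1,\delta_0\}$. Take any $y\in K$ and $x\in\mathbb{B}_\delta(\overline{x})$. Both $(x,y)$ and $(\overline{x},y)$ lie in $C$, and their distance is $\|x-\overline{x}\|<\delta\le\delta_0$. Uniform continuity then gives $|f(x,y)-f(\overline{x},y)|<\epsilon$. Since $\delta$ depends only on $\overline{x}$ and $\epsilon$, not on $y$, this is exactly the claim.

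If one prefers not to cite Heine--Cantor, the same conclusion follows from a direct covering argument. For each $y'\in K$, continuity of $f$ at $(\overline{x},y')$ gives a radius $r_{y'}$ such that $|f(x,y)-f(\overline{x},y')|<\epsilon/2$ whenever $\|x-\overline{x}\|<r_{y'}$ and $\|y-y'\|<r_{y'}$. Cover $K$ by the balls $\mathbb{B}_{r_{y'}}(y')$, extract a finite subcover, and take $\delta$ to be the minimum of the finitely many radii. For $y\in K$, choose $y'$ from the subcover with $\|y-y'\|<r_{y'}$. The triangle inequality through $f(\overline{x},y')$ then gives $|f(x,y)-f(\overline{x},y)|<\epsilon$ for all $x\in\mathbb{B}_\delta(\overline{x})$.

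There is no real obstacle in either route. The only point requiring care is ensuring that $\delta$ is independent of $y$, and this is exactly what compactness of $K$, through the finite subcover or uniform continuity on $C$, provides.
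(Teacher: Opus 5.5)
Your proposal is correct. Your fallback covering argument is essentially the paper's proof. The paper also obtains, from continuity at each $(\overline{x},y')$, a product neighborhood on which $f$ stays within $\epsilon/2$ of $f(\overline{x},y')$. It then extracts a finite subcover of $\{\overline{x}\}\times K$, takes $\delta$ to be the minimum of the $x$-radii, and concludes with the triangle inequality through $f(\overline{x},y_i)$. The only cosmetic difference is that it uses separate radii $\chi_y,\gamma_y$ in the two directions where you use a single $r_{y'}$. You also keep the strict inequality $<\epsilon$ throughout, which the paper's write-up loosens to $\le\epsilon$ at the end.

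Your primary route, Heine--Cantor on the compact set $\overline{\mathbb{B}_1(\overline{x})}\times K$, is a genuinely different and shorter argument. It also yields more than the lemma asks, namely uniformity jointly in both arguments on that set. The cost is that it relies on closed balls in $\R^{n_x}$ being compact, i.e.\ on local compactness in the $x$-variable. The covering argument uses only compactness of $K$, so it would go through verbatim with $\R^{n_x}$ replaced by an arbitrary metric space. It is also self-contained rather than deferring the compactness step to a named theorem whose own proof is a covering or sequential-compactness argument of the same kind. Since everything in the paper is finite-dimensional, either route suffices.
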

\begin{proof}
    Let $\overline{x}\in \R^{n_x}$ and $\epsilon>0$. Consider any point $y\in K$. By the continuity\footnote{Here we use the product of open balls in $\R^{n_x}$ and $\R^{n_y}$ to define continuity instead of directly using open balls in $\R^{n_x}\times \R^{n_y}$. The two are of course equivalent, since the products of open sets form the basis of the product topology~\citep{Hatcher2005Notes}.} of $f$, there exist a $\gamma_y,\chi_y>0$, such that for all $x'\in \mathbb{B}_{\chi_y}(\overline{x})$ and all $y'\in \mathbb{B}_{\gamma_y}(y)$:
    \[
        |f(x',y')-f(\overline{x},y)|<\epsilon/2.
    \]
    For the given $\epsilon$, let us denote for each $y\in K$, these open neighborhoods by $N_y= \{(x',y'): x'\in \mathbb{B}_{\chi_y}(\overline{x}), y'\in \mathbb{B}_{\gamma_y}(y)\}$. The collection $\{N_y: y\in K\}$ forms an open cover of $\overline{x} \times K$ and thus, by compactness. there exists a finite subcover $\{N_{y_1},\dots,N_{y_m}\}$. We define $\delta = \min_{1\leq i \leq m} \chi_{y_i}$ and claim that $\delta$ satisfies the condition of the lemma.

    Indeed, let $y\in K$ and $x\in \mathbb{B}_\delta(\overline{x})$. By construction there exists a $y_i$ such that $y \in \B_{\gamma_{y_i}}(y_i)$ and thus $(x,y)\in N_{y_i}$. As $\delta\leq \chi_{y_i}$, it follows that:
    \[
    | f(x,y)-f(\overline{x},y_i) | \leq \epsilon/2.
    \]
    Applying the triangle inequality yields the desired result:
    \begin{align*}
        | f(x,y)-f(\overline{x},y) | \leq & | f(x,y)-f(\overline{x},y_i) | + | f(\overline{x},y_i)-f(\overline{x},y) |\\
        \leq & \epsilon/2 + \epsilon/2 \\
        =& \epsilon.
    \end{align*}
\end{proof}

For the next result, we need to introduce the notion of outer semicontinuity.
\begin{definition}[Outer Semicontinuity]
    \label{def:osc}
    Let $S: \R^n \rightrightarrows \R^m$ be a set-valued map. We say $S$ is outer semicontinuous at $\overline{x}\in \R^n$, if for all open sets $V$, such that $S(\overline{x})\subset V$, there exists an open set $U$ such that $\overline{x}\in U$ and $S(x)\subset V$ for all $x\in U$. We say $S$ is outer semicontinuous if it is outer semicontinuous at all $\overline{x}\in \R^n$.
\end{definition}

\begin{lemma}[Outer Semicontinuity of Best Response Maps]
    \label{lem:osc}
    Consider an incave game, as described in \Cref{sec:incavegames}. The best response maps $\BR_i(\cdot)$ are outer semicontinuous.
\end{lemma}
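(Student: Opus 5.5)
Fix a player $i$ and abbreviate $g(a_i,a_{-i}) := u_i(a_i,a_{-i})$. Outer semicontinuity of $\BR_i$ at a point $\overline{a}_{-i}$ means: whenever an open $V$ contains $\BR_i(\overline{a}_{-i})$, all best responses to nearby $a_{-i}$ also lie in $V$. I argue by contradiction. Suppose there are an open $V \supseteq \BR_i(\overline{a}_{-i})$, a sequence $a_{-i}^k \to \overline{a}_{-i}$, and best responses $a_i^k \in \BR_i(a_{-i}^k)$ with $a_i^k \notin V$ for all $k$. The proof has two steps. First I show that the $a_i^k$ stay bounded. Then the standard closed-graph argument, based on continuity of $g$, shows that any limit point lies in $\BR_i(\overline{a}_{-i})$, which contradicts $a_i^k \in V^c$ with $V^c$ closed.

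\textbf{Step 1 (localization).} Pick $\overline{a}_i \in \BR_i(\overline{a}_{-i})$. This set is nonempty and compact by \Cref{thm:main} applied to $-g(\cdot,\overline{a}_{-i})$, since $g(\cdot,\overline{a}_{-i})$ is decreasing at infinity and continuous. Let $m := g(\overline{a}_i,\overline{a}_{-i})$. Decreasing at infinity gives a compact $K \subset A_i$ with $g(z,\overline{a}_{-i}) < m$ for all $z \notin K$. I need this control uniformly in $a_{-i}$ near $\overline{a}_{-i}$. For that I would read the assumption ``decreasing at infinity with respect to $a_i$'' in \Cref{def:incavegames} as locally uniform in $a_{-i}$. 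Concretely: there are a compact $K$ and a neighborhood $W$ of $\overline{a}_{-i}$ such that
\[
g(z,a_{-i}) < g(\overline{a}_i,a_{-i}) \quad \text{for all } z \notin K,\ a_{-i} \in W.
\]
This is the main obstacle. Pointwise decrease at infinity does not by itself prevent best responses from escaping to infinity as $a_{-i}$ varies. My first attempt would be to derive the uniform version using \Cref{lem:uniformcont} on a large compact set together with a sphere argument: compare values on $\partial \mathbb{B}_R$ with $g(\overline{a}_i,\cdot)$. If this fails, I would state the locally uniform version explicitly as the intended meaning of the assumption. Given the bound, $a_i^k \in K$ for $k$ large, since a best response must be at least as good as $\overline{a}_i$.

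\textbf{Step 2 (closed graph).} Pass to a subsequence with $a_i^k \to \hat a_i \in K \setminus V$; the limit avoids $V$ because $V^c$ is closed. For every $z \in A_i$ we have $g(a_i^k,a_{-i}^k) \ge g(z,a_{-i}^k)$. Letting $k \to \infty$ and using joint continuity of $g$ (which is $\cC^1$) gives $g(\hat a_i,\overline{a}_{-i}) \ge g(z,\overline{a}_{-i})$. Hence $\hat a_i \in \BR_i(\overline{a}_{-i}) \subseteq V$, a contradiction.

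Incavity itself is used only to guarantee, via \Cref{thm:main}, that the best-response sets are nonempty, compact and connected. Apart from that, the argument is Berge's maximum theorem once Step 1 supplies the compactness.
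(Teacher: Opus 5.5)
\textbf{The gap is in Step 1, which carries the whole content of the lemma.} Your Step 2 is fine. The sphere comparison you sketch only gives $g(z,a_{-i})<g(\overline{a}_i,a_{-i})$ for $z\in\partial\mathbb{B}_R$ and $a_{-i}$ near $\overline{a}_{-i}$ (by compactness of $\partial\mathbb{B}_R$ and \Cref{lem:uniformcont}). By itself this says nothing about points outside $\mathbb{B}_R$. Your fallback of reading ``decreasing at infinity'' as locally uniform in $a_{-i}$ changes the hypothesis rather than proving the lemma.

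Your closing remark is also wrong: incavity is not used only to make $\BR_i(\overline{a}_{-i})$ nonempty and compact, and without it the lemma fails. Take $h(z,t)=e^{-z^2}+2e^{-(z-1/t)^2}$ for $t\neq 0$ and $h(z,0)=e^{-z^2}$. This $h$ is continuously differentiable and each $h(\cdot,t)$ is decreasing at infinity. Yet the maximizer jumps from $0$ at $t=0$ to a point near $1/t$ for small $t\neq 0$. The failure is exactly that $h(\cdot,t)$ has a non-global local maximum near $0$, i.e.\ it is not incave.

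\textbf{The missing idea is to apply \Cref{thm:main} at every nearby $a_{-i}$, not only at $\overline{a}_{-i}$.} Since $-g(\cdot,a_{-i})$ is invex and increasing at infinity, the superlevel set $L(a_{-i}):=\{z: g(z,a_{-i})\ge g(\overline{a}_i,a_{-i})\}$ is connected. It contains $\overline{a}_i\in\mathbb{B}_R$, and by your sphere estimate it misses $\partial\mathbb{B}_R$. Now $\R^{n_i}\setminus\partial\mathbb{B}_R$ is the union of the disjoint open sets $\mathbb{B}_R$ and $\R^{n_i}\setminus\overline{\mathbb{B}_R}$, so connectedness forces $L(a_{-i})\subseteq\mathbb{B}_R$. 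Every best response to $a_{-i}$ lies in $L(a_{-i})$. This is exactly your locally uniform bound with $K=\overline{\mathbb{B}_R}$, and after it your Step 2 goes through.

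The paper uses the same trapping argument but puts the barrier at $\partial N_\epsilon$, where $N_\epsilon\subseteq V$ is an $\epsilon$-neighborhood of the compact set $\BR_i(\overline{a}_{-i})$. Since $\partial N_\epsilon$ is compact and disjoint from $\BR_i(\overline{a}_{-i})$, $g(\cdot,\overline{a}_{-i})$ has a uniform gap below its maximum there. Connectedness then yields $\BR_i(a_{-i})\subseteq N_\epsilon\subseteq V$ directly, with no sequential closed-graph step.
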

\begin{proof}
    Let $\bar{a}_{-i} \in \prod_{j\neq i} A_j$ and let $V \supset \BR_i(\bar{a}_{-i})$ be an open set in $\R^{n_i}$. By compactness of $\BR_i(\bar{a}_{-i})$, there exists an $\epsilon>0$, such that:
    \[
    N_\epsilon= \B_{\epsilon}(\BR_i(\bar{a}_{-i})) = \{ a_i \in A_i: d(a_i,\BR_i(\bar{a}_{-i}))<\epsilon \} \subset V.
    \]

    Let $\bar{a}_i \in \BR_i(\bar{a}_{-i})$. It follows from compactness of the boundary $\partial N_\epsilon$ that there exists a $c_1$, such that
    \[
    \forall a_i\in \partial N_\epsilon: u_i(\bar{a}_i,\bar{a}_{-i}) > u_i(a_i,\bar{a}_{-i}) +c_1.
    \]
    Moreover, \Cref{lem:uniformcont} applied to the compact set $\partial N_\epsilon$ implies that for all $\epsilon$, there exists a $\delta>0$, such that:
    \[
    \forall a_{-i}\in \B_{\delta}(\bar{a}_{-i}), \forall a_i\in \partial N_\epsilon: |u_i(a_i,a_{-i}) - u_i(a_i,\bar{a}_{-i})| < \epsilon.
    \]

    Combining the two inequalities, we conclude that there exist  $c_2,\delta_1$, such that:
    \[
    \forall a_{-i}\in \B_{\delta_1}(\bar{a}_{-i}), \forall a_i\in \partial N_\epsilon: u_i(\bar{a}_i,\bar{a}_{-i}) > u_i(a_i,a_{-i}) + c_2.
    \]
    At the same time from the continuity of $u_i$, it follows that there exists a $\delta_2>0$, such that:
    \[
    \forall a_{-i}\in \B_{\delta_2}(\bar{a}_{-i}): u_i(\bar{a}_i,a_{-i}) > u_i(\bar{a}_i,\bar{a}_{-i}) - c_2/2.
    \]
    Let $\delta=\min(\delta_1,\delta_2)$. It follows that for all $a_{-i}\in \B_{\delta}(\bar{a}_{-i})$, the utility achieved by playing $\bar{a}_i$ is greater than the utility achieved by playing any $a_i\in \partial N_\epsilon$. Combining this with the connectedness of superlevel sets of $u_i(\cdot,\bar{a}_{-i})$ (cf. \Cref{thm:main}), we conclude that for all $a_{-i}\in \B_{\delta}(\bar{a}_{-i})$, it holds that $\BR_i(a_{-i})\subset N_\epsilon \subset V$ and thus $\BR_i(a_{-i})$ is outer semicontinuous.
\end{proof}

\begin{definition}[Inner semicontinuity]
    \label{def:isc}
    Let $S:\R^{n} \rightrightarrows \R^{m}$ be a set-valued map. We say $S$ is inner semicontinuous at $\overline{x} \in \R^n$, if for all open sets $V$, such that $S(\overline{x})\cap V \neq \emptyset$, there exists an open set $U$, such that $\overline{x}\in U$ and for all $x\in U$, it holds that $S(x)\cap V \neq \emptyset$.
\end{definition}

\begin{lemma}
    \label{lem:isc}
    Under \Cref{assum:localPL}, the set-valued maps $\BR_y(x)$ and $\BR_x(y)$ are inner semicontinuous at all $\overline{x}\in X$, respectively all $\overline{y}\in Y$.
\end{lemma}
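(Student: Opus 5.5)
The plan is to prove something stronger than inner semicontinuity: a quantitative local inner Hölder estimate of the form $d(\overline{y},\BR_y(x)) \le C\|x-\overline{x}\|^{\theta}$ for some $\theta>0$. This is the same mechanism used in the proof of \Cref{prop:EB}, now run under the local hypotheses of \Cref{assum:localPL}. I treat $\BR_y$ at $\overline{x}\in X$; the case of $\BR_x$ at $\overline{y}\in Y$ is symmetric, with the roles of minimum and maximum exchanged. The ingredients I use from \Cref{assum:localPL}, as in its verification in \Cref{sec:proof:ex:localPLconnected}, are the following, all on a neighborhood $N$ of $\underline{M}\cup\overline{M}$:
\begin{itemize}
\item local Lipschitz continuity of $\nabla f$;
\item a local $\alpha$-PL inequality for $f(x,\cdot)$ with uniform constants;
\item a local growth condition $\max_{y'} f(x,y') - f(x,y) \ge \eta\, d(y,\BR_y(x))^{\beta}$, with $\beta=2$ in the quadratic case.
\end{itemize}

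\textbf{Step 1: reduce to a distance estimate.} Fix $\overline{x}\in X$ and an open $V$ with $\BR_y(\overline{x})\cap V\neq\emptyset$, and pick $\overline{y}$ in this intersection. Since $\overline{x}\in X$ and $\overline{y}\in\BR_y(\overline{x})$, we have $F(\overline{x})=f(\overline{x},\overline{y})$, so $(\overline{x},\overline{y})\in\underline{M}$ and the local assumptions apply on a neighborhood of $(\overline{x},\overline{y})$. Choose $r>0$ with $\mathbb{B}_r(\overline{y})\subset V$. It then suffices to find an open $U\ni\overline{x}$ such that $d(\overline{y},\BR_y(x))<r$ for all $x\in U$. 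Because $\BR_y(x)$ is compact by \Cref{thm:main} and the decreasing-at-infinity assumption, the distance is attained, and this gives $\BR_y(x)\cap V\neq\emptyset$.

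\textbf{Step 2: chain the three local inequalities at the fixed point $\overline{y}$.} For $x$ close to $\overline{x}$ the pair $(x,\overline{y})$ lies in $N$, and I would estimate
\begin{align*}
\eta\, d(\overline{y},\BR_y(x))^{\beta} &\le \max_{y'} f(x,y') - f(x,\overline{y}) \\
&\le \mu^{-1}\|\nabla_y f(x,\overline{y})\|^{\alpha} \\
&= \mu^{-1}\|\nabla_y f(x,\overline{y})-\nabla_y f(\overline{x},\overline{y})\|^{\alpha} \\
&\le \mu^{-1} L^{\alpha}\|x-\overline{x}\|^{\alpha}.
\end{align*}
The first line is the growth condition, the second is the PL inequality, and the last is the local Lipschitz continuity of the gradient. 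The equality uses $\nabla_y f(\overline{x},\overline{y})=0$, which holds because $\overline{y}$ maximizes the incave, differentiable function $f(\overline{x},\cdot)$. Consequently $d(\overline{y},\BR_y(x))\le C\|x-\overline{x}\|^{\alpha/\beta}\to 0$. Taking $U$ to be a small enough ball around $\overline{x}$, intersected with the neighborhood where the local assumptions hold, finishes the proof.

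\textbf{Main obstacle.} The delicate point is locality. The growth condition must be applied at $(x,\overline{y})$ for $x\neq\overline{x}$, but the distance is to $\BR_y(x)$, whose points may lie outside $N$, and $N$ is only a neighborhood of $\underline{M}\cup\overline{M}$. If \Cref{assum:localPL} includes the growth condition on $N$ with $\BR$ the global best response, as in the example, Step 2 goes through directly.

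If instead growth has to be derived from the local PL inequality, I would imitate \Cref{lem:PLgrowth}. Run the gradient ascent flow of $f(x,\cdot)$ from $\overline{y}$. Its length is bounded by a power of the initial gap $\max_{y'} f(x,y') - f(x,\overline{y})$, and that gap tends to $0$ as $x\to\overline{x}$ by the PL-plus-Lipschitz bound above. For $x$ close enough to $\overline{x}$ the trajectory therefore stays inside a ball around $\overline{y}$ contained in $N$, so the local PL constants apply along the whole path. The flow then converges to a point of $\BR_y(x)$ within the required distance of $\overline{y}$.
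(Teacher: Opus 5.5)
The gap is in Step 2, and it is exactly the obstacle you flag but do not resolve. \Cref{assum:localPL} does not provide the PL and growth inequalities on a neighborhood of $(\overline{x},\overline{y})$. It provides them only for $x'\in N^{\text{PLG}}_{\overline{x}}$ and $y'\in\B_{\delta^{(y)}_{\overline{x}}}(\BR_y(x'))$, i.e.\ in a tube around the graph of $\BR_y$. In \Cref{ex:localPLconnected} the two readings happen to coincide, which is why your reading looks fine there. To use either inequality at $(x,\overline{y})$, you must already know $d(\overline{y},\BR_y(x))<\delta^{(y)}_{\overline{x}}$ for all $x$ near $\overline{x}$. That is inner semicontinuity at a fixed scale, which is essentially what the lemma asserts.

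Indeed, your chain is essentially the paper's proof of \Cref{prop:localPLconnected}. That proof invokes the present lemma precisely to secure $d(\overline{y},\BR_y(x))\le\delta^{(y)}_{\overline{x}}$ before chaining. Your gradient-flow fallback is circular in the same way. You make the initial gap small ``by the PL-plus-Lipschitz bound above'', which again needs PL at $(x,\overline{y})$. A ball around $\overline{y}$ also lies in the PL region only if $\overline{y}$ is already close to $\BR_y(x)$. And since PL is a pointwise gradient inequality, it cannot be propagated from the tube out to $\overline{y}$.

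What is missing is a way to make the gap $F(x)-f(x,\overline{y})$ small without presupposing localization, namely continuity of $F$ at $\overline{x}$. The paper obtains this from two facts:
\begin{enumerate}
\item outer semicontinuity of $\BR_y$ (\Cref{lem:osc}, which rests on decreasing at infinity and the connected superlevel sets of \Cref{thm:main});
\item uniform continuity of $f$ over a compact neighborhood of $\BR_y(\overline{x})$ (\Cref{lem:uniformcont}).
\end{enumerate}
Together with $f(x,\overline{y})\to f(\overline{x},\overline{y})=F(\overline{x})$, this forces the gap to $0$. After that, only the growth condition is needed, with no PL and no Lipschitz gradient. If $d(\overline{y},\BR_y(x_\delta))\ge\epsilon_1$ along $x_\delta\to\overline{x}$, growth bounds the gap below by $\eta_x\epsilon_1^{\alpha_x/(\alpha_x-1)}$, a contradiction.

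Growth is also only a tube condition, and the paper applies it at $(x_\delta,\overline{y})$ without comment. It can be justified as follows:
\begin{enumerate}
\item Shrink $\epsilon_1$ below $\delta^{(y)}_{\overline{x}}$; this is harmless.
\item The superlevel set $\{y: f(x_\delta,y)\ge f(x_\delta,\overline{y})\}$ is connected and contains both $\BR_y(x_\delta)$ and $\overline{y}$.
\item Hence it contains a point $y^\dagger$ with $d(y^\dagger,\BR_y(x_\delta))=\epsilon_1$, which lies in the tube.
\item At $y^\dagger$ growth applies and gives $F(x_\delta)-f(x_\delta,\overline{y})\ge F(x_\delta)-f(x_\delta,y^\dagger)\ge\eta_x\epsilon_1^{\alpha_x/(\alpha_x-1)}$.
\end{enumerate}
Once inner semicontinuity is established this way, your Step 2 becomes legitimate and upgrades it to the H\"older estimate of \Cref{prop:localPLconnected}.
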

\begin{proof}
WLOG we restrict to showing that $\BR_y(x)$ is inner semicontinuous at a given $\overline{x}\in X$. The proof for $\BR_x(y)$ follows analogously.
We can rephrase the inner semicontinuity condition using open balls as follows: 
\[
\forall \epsilon>0\; \forall \overline{y}\in \BR_y(\overline{x})\; \exists \delta>0\; \forall x \in \B_{\delta}(\overline{x}): \BR_y(x) \cap \B_{\epsilon}(\overline{y}) \neq \emptyset.
\]

For contradiction, suppose that $\exists \overline{y}\in \BR_y(\overline{x})$ and $\epsilon_1$, such that $\forall \delta$, there exists an $x_\delta \in \B_{\delta}(\overline{x})$ with $\BR_y(x_\delta) \cap \B_{\epsilon_1}(\overline{y}) = \emptyset$. In other words, for all $\delta>0$, there exists an $x_\delta \in \B_{\delta}(\overline{x})$ such that $d(\overline{y}, \BR_y(x_\delta)) \geq \epsilon_1$.

From \Cref{assum:localPL} we can use the local growth condition to deduce that for $\delta$ small enough such that $\B_{\delta}(\overline{x})\subset N_{\overline{x}}^{PLG}$, the following holds:
\[
\max_y f(x_\delta,y)-f(x_\delta,\overline{y}) \geq \eta_x d(\overline{y},\BR_y(x_\delta))^{\frac{\alpha_x}{\alpha_x-1}} \geq \eta_x \epsilon_1^{\frac{\alpha_x}{\alpha_x-1}}=k.
\] For brevity, we define $k:=\eta_x \epsilon_1^{\frac{\alpha_x}{\alpha_x-1}}$. Note that $k$ is independent of our choice of $\delta$.

Let us choose any $\epsilon_2>0$. From \Cref{lem:osc}, we conclude that $\BR_y(x)$ is outer semicontinuous at $\overline{x}$.\footnote{While \Cref{lem:osc} was written for the more general setting of incave games, it captures invex-incave minimax optimization as a special case.} In particular, there exists a $\delta_2>0$, such that for all $x\in \B_{\delta_2}(\overline{x})$:
\[
\BR_y(x) \subset \B_{\epsilon_2}(\BR_y(\overline{x})).
\]
Consider the compact set $K= \overline{\B_{\epsilon_2}(\BR_y(\overline{x}))}$. We invoke \Cref{lem:uniformcont}, which states:
\begin{equation}
    \label{eq:uniformconteps}
    \forall \epsilon \; \exists \delta \; \forall y \in K \; \forall x \in \B_{\delta}(\overline{x}): |f(x,y)-f(\overline{x},y)|<\epsilon.
\end{equation}

We choose a $\delta\leq \delta_2$ and let $\epsilon_\delta$ be the corresponding $\epsilon$ for which \eqref{eq:uniformconteps} holds. Further, let us denote by $y_\delta$ an arbitrary point in $\BR_y(x_{\delta})$. By definition of $\overline{y}\in \BR_y(\overline{x})$, we know that $f(\overline{x},\overline{y})\geq f(\overline{x},y_\delta)$. However, at the same time, we have that $\forall \delta \leq \delta_2$:
\begin{align*}
    f(\overline{x},y_{\delta})+ \epsilon_\delta &\geq f(x_\delta,y_\delta)\\
    &\geq f(x_\delta,\overline{y}) + k\\
    &\geq f(\overline{x},\overline{y}) + k - \epsilon_\delta.
\end{align*}
The first and third inequalities follow from the uniform continuity of $f$ and its outer semicontinuity, whereas the second inequality follows from the growth condition discussed beforehand. Rearranging the inequalities yields:
\[
\forall \delta \leq \delta_2: f(\overline{x},y_\delta) + 2 \epsilon_\delta \geq f(\overline{x},\overline{y}) + k.
\]
However, $\epsilon_\delta \to 0$ as $\delta \to 0$, and thus for sufficiently small $\delta$, $f(\overline{x},y_\delta) > f(\overline{x},\overline{y}) $, 
 such that we reach a contradiction with $f(\overline{x},\overline{y})\geq f(\overline{x},y_\delta)$. We therefore conclude that $\BR_y(x)$ is inner semicontinuous at $\overline{x}$.
\end{proof}

\begin{lemma}[Connectedness of Best Response Graph]
    \label{lem:connectedBRgraph}
    In an incave game, as described in \Cref{sec:incavegames} and given a connected set $S\subseteq \mathcal{A}$, the best response graph $\Gr(\BR_i|_{\pi_{-i}(S)})$ is connected. 
\end{lemma}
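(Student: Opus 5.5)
The plan is to use the classical fact that an outer semicontinuous set-valued map with compact, connected values, defined on a connected domain, has a connected graph. Write $T:=\pi_{-i}(S)$. Since $\pi_{-i}$ is continuous and $S$ is connected, $T$ is connected. The graph in question is
\[
\Gr(\BR_i|_{T})=\{(a_{-i},a_i): a_{-i}\in T,\ a_i\in \BR_i(a_{-i})\}.
\]
Each fibre $\BR_i(a_{-i})$ is the set of maxima of the incave, decreasing-at-infinity function $u_i(\cdot,a_{-i})$. By \Cref{thm:main}, applied through the remark on superlevel sets of incave functions, each fibre is connected. It is non-empty because a continuous function that is decreasing at infinity attains its maximum. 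It is compact because the maximizers lie in a compact set outside of which $u_i$ is strictly smaller, and the set of maximizers is closed. By \Cref{lem:osc}, $\BR_i$ is outer semicontinuous.

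We argue by contradiction. Suppose the graph splits as $G=P\cup Q$, with $P,Q$ disjoint, nonempty and relatively open in $G$.

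\textbf{Step 1: each fibre lies entirely in $P$ or in $Q$.} For each $a_{-i}\in T$, the fibre $\{a_{-i}\}\times \BR_i(a_{-i})$ is connected. It therefore cannot be split by $P$ and $Q$, so it lies entirely in one of them. Let $T_P$ be the set of $a_{-i}\in T$ whose fibre lies in $P$, and define $T_Q$ analogously. Then $T=T_P\cup T_Q$, the two sets are disjoint, and both are nonempty.

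\textbf{Step 2: $T_P$ is relatively open in $T$.} This is the main obstacle, and it is where compactness and outer semicontinuity enter. Fix $\bar a_{-i}\in T_P$. Since $P$ is relatively open, there is an open $W\subset \R^{\dim}$ with $P=W\cap G$. Then $\{\bar a_{-i}\}\times\BR_i(\bar a_{-i})\subset W$. Because this fibre is compact, a tube-lemma argument gives an $r>0$ and an open $V\supset \BR_i(\bar a_{-i})$ such that $\B_r(\bar a_{-i})\times V\subset W$. Outer semicontinuity then yields $\delta\le r$ such that $\BR_i(a_{-i})\subset V$ for all $a_{-i}\in\B_\delta(\bar a_{-i})$. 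Hence for every $a_{-i}\in T\cap \B_\delta(\bar a_{-i})$ the fibre lies in $W\cap G=P$, which means $a_{-i}\in T_P$. The same argument shows that $T_Q$ is relatively open.

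\textbf{Step 3: contradiction.} Steps 1 and 2 exhibit $T$ as a disjoint union of two nonempty, relatively open sets. This contradicts the connectedness of $T$, so the graph $\Gr(\BR_i|_{\pi_{-i}(S)})$ is connected.

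The care needed lies in Step 2. There we must check that the tube lemma applies, which requires compactness of the fibre, and that \Cref{lem:osc} is used in its neighbourhood form (\Cref{def:osc}). Nonemptiness of the fibres matters in Step 1: it guarantees that $T_P$ and $T_Q$ are nonempty whenever $P$ and $Q$ are.
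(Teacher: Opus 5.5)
Your proposal is correct and follows essentially the same route as the paper. Connectedness of each fibre (via \Cref{thm:main}) forces each fibre into one piece of the splitting, and the generalized tube lemma combined with outer semicontinuity (\Cref{lem:osc}) makes the projected pieces relatively open, contradicting connectedness of $\pi_{-i}(S)$. You are also more careful than the paper in working with relatively open sets and in checking that the fibres are nonempty and compact, which the disjointness and tube-lemma steps genuinely need.
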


\begin{proof}
    First note, that since $\pi_{-i}$ is a continuous functions and $S$ is connected, it follows that $\pi_{-i}(S)$ is connected.
    Next, for contradiction, suppose there exists two disjoint open sets $A,B$ such that $\Gr(\BR_i|_{\pi_{-i}(S)})=A\cup B$. 

    Let us define the sets:
    \[
        \begin{aligned}
            A_S &= \{a_{-i} \in \pi_{-i}(S): (a_i,a_{-i})\in A\}\\
            B_S &= \{a_{-i} \in \pi_{-i}(S): (a_i,a_{-i})\in B\}.
        \end{aligned}
    \]
    By construction, $A_S$ and $B_S$ are both non-empty and $\pi_{-i}(S)= A_S \cup B_S$. Our aim is to show that the sets are open and disjoint to reach a contradiction to the connectedness of $\pi_{-i}(S)$.

    Let us start with showing they are disjoint. Indeed, for contradiction, suppose that there exists $a_{-i}'$ such that $a_{-i}' \in A_S \cap B_S$. In this case, there exist disjoint, open sets $A\cap \left(\BR_i(a_{-i}')\times\{a_{-i}'\}\right)$ and $B\cap \left(\BR_i(a_{-i}')\times\{a_{-i}'\}\right)$ whose union is the connected set $\BR_i(a_{-i}')\times\{a_{-i}'\}$ (again using \Cref{thm:main} applied to the incavity of $u_i$), a contradiction. It follows that $A_S$ and $B_S$ are disjoint.
    
    Next, we show that $A_S$ is open (the proof for $B_S$ follows analogously). Let ${a}_{-i}' \in A_S$. By construction, this implies the compact set $\BR_i(a_{-i}')\times\{a_{-i}'\}$ is a subset of the open set $A$. Applying the generalized tube lemma~\citep{Munkres2014Topology}, there exist $\epsilon_1,\epsilon_2>0$, such that:
    \[
        \{ (a_i,a_{-i}): a_i\in \B_{\epsilon_1}(\BR_i(a_{-i}')), a_{-i}\in \B_{\epsilon_2}(a_{-i}') \} \subset A.
    \]
    As shown in \Cref{lem:osc}, the best response map $\BR_i(\cdot)$ is outer semicontinuous. Therefore, there exists a $\delta$ with $0<\delta<\epsilon_2$, such that for all $a_{-i}\in \B_\delta(a_{-i}')$:
    \begin{align*}
        &\BR_i(a_{-i}) \subseteq  \B_{\epsilon_1}(\BR_i(a_{-i}')) \\
        \iff & \BR_i(a_{-i})\times\{a_{-i}\} \subseteq  A\\
        \iff & a_{-i} \in  A_S.
    \end{align*}
    We conclude that $A_S$ is open. The same argument holds for $B_S$, which would imply $\pi_{-i}(S)$ is disconnected, a contradiction. Therefore, $\Gr(\BR_i|_{\pi_{-i}(S)})$ is connected.
    
\end{proof}

\begin{lemma} [Connected Finite Intersection Property]
    \label{lem:finiteintersection}
    Let $\{ K_i \}_{i\in \N}$ be a collection of nested\footnote{That is $K_i\supset K_{i+1}$ for all $i$.}, compact, nonempty, connected sets inside a normal subspace $X\subseteq\R^n$. We say that the sets have the \emph{finite intersection property}, i.e. $\forall n \in \N: \bigcap_{i=1}^n K_i \neq \emptyset$.
    Denote by $K_\infty = \bigcap_{i=1}^\infty K_i$ the intersection of all $K_i$. Then:
    \begin{enumerate}
        \item $K_\infty$ is non-empty and compact.
        \item $K_\infty$ is connected.
    \end{enumerate}
\end{lemma}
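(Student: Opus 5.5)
The plan is to prove the two claims in order, using only compactness in $\R^n$ for the first and a separation argument, based on the normality of $X$, for the second.

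\textbf{Part 1 (non-empty and compact).} Each $K_i$ is compact in $\R^n$, hence closed, so $K_\infty$ is closed. It is also contained in the compact set $K_1$, and a closed subset of a compact set is compact. For non-emptiness, I will argue by contradiction. Suppose $K_\infty=\emptyset$. Then the open sets $K_1\setminus K_i$, for $i\ge 2$, cover $K_1$. Compactness of $K_1$ gives a finite subcover, and since the sets are nested, a single $K_1\setminus K_m$ already covers $K_1$. This means $K_m=\emptyset$, contradicting the assumption that each $K_i$ is non-empty. Equivalently, it contradicts the finite intersection property, since $\bigcap_{i=1}^m K_i=K_m\neq\emptyset$.

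\textbf{Part 2 (connected).} Again I argue by contradiction. Suppose $K_\infty=A\cup B$ with $A,B$ disjoint, non-empty, and closed in $K_\infty$. Because $K_\infty$ is closed, $A$ and $B$ are closed in $X$ (indeed in $\R^n$).

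1. By normality of $X$, choose disjoint open sets $U\supseteq A$ and $V\supseteq B$.
2. For each $i$, consider $C_i:=K_i\setminus(U\cup V)$. Each $C_i$ is a closed subset of $K_i$, hence compact, and the sequence $(C_i)$ is nested.
3. I claim every $C_i$ is non-empty. The set $K_i$ is connected and meets both $U$ and $V$, because it contains $A$ and $B$. If $K_i\subseteq U\cup V$, then $K_i\cap U$ and $K_i\cap V$ would be disjoint, non-empty, and relatively open, and their union would be $K_i$. That would disconnect $K_i$, which is impossible.
4. By Part 1 applied to the nested non-empty compact sets $C_i$, the intersection $\bigcap_i C_i$ is non-empty.
5. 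But $\bigcap_i C_i = K_\infty\setminus(U\cup V)=\emptyset$, since $K_\infty=A\cup B\subseteq U\cup V$. This is a contradiction, so $K_\infty$ is connected.

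The only delicate point is step 3, together with making sure that the sets $A$ and $B$ from the disconnection are closed in the ambient space, so that normality actually applies. This is guaranteed because $K_\infty$ is itself closed. In the Euclidean setting the normality of $X$ is automatic, since $X\subseteq\R^n$ is a metric space; but the argument uses only normality, so the hypothesis as stated suffices.
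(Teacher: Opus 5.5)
Your proof is correct and follows the paper's argument. Both separate $A$ and $B$ by disjoint open sets via normality, form the nested compact sets $K_i\setminus(U\cup V)$, and apply the nested-compact intersection property against $K_\infty\setminus(U\cup V)=\emptyset$. The differences are cosmetic. You show every $K_i\setminus(U\cup V)$ is non-empty directly from the connectedness of $K_i$, whereas the paper first deduces that some $K_N\subseteq O_A\cup O_B$ and then disconnects $K_N$. You also prove Part 1 yourself rather than citing Munkres.
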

\begin{proof}
    The first property is a standard result in topology, see e.g. \citet[Thm. 26.9]{Munkres2014Topology}. For the second property, assume for contradiction that $K_\infty$ is disconnected. Then there exist two nonempty, disjoint, closed sets $A,B$, such that $K_\infty = A \cup B$. Since $K_\infty$ is closed, it follows that $A$ and $B$ are closed in $X$. By the normality of $X$, there exist open sets $O_A,O_B$ such that $A\subset O_A$, $B\subset O_B$ and $O_A \cap O_B = \emptyset$.
    Denote by $O = O_A \cup O_B$ the union of the two open sets. We claim there exists some $N$, such that for all $n\geq N$, $K_n \subset O$. 
    For contradiction, suppose this is not the case. Then, we can construct a sequence of non-empty, closed, nested sets $\{C_m= K_m \setminus O\}$. It follows from the first property that $C_\infty=\bigcap_{m=1}^\infty C_m \neq \emptyset$ and by construction $C_\infty \subset K_\infty\setminus O = \emptyset$, a contradiction. Therefore, there exists some $N$, such that for all $n\geq N$, $K_n \subset O$.
    But in this case, $O_A,O_B$ disconnect $K_n$, which is a contradiction to the assumption that $K_n$ is connected. Therefore, $K_\infty$ is connected.
\end{proof}

For the next Lemma, let $P$ denote the set of minima of a function $f$. We say that a continuously differentiable function $f:\R^n \to \R$ locally satisfies the $\alpha$-PL condition (cf. \Cref{def:aPL}) around the set of minima $P$, if there exists an open neighborhood $N$ of $P$, such that $P \subset N$ and a constant $\mu>0$, such that:
    \[
    \forall x \in N: \|\nabla f(x)\|^\alpha \geq \mu \left(f(x) - \min f(x)\right).
    \]

\begin{lemma}[$\alpha$-PL implies $\frac{\alpha}{\alpha-1}$-growth] 
    \label{lem:PLgrowth}  
    Let $f:\R^n \to \R$ be a differentiable function. If $f$ is invex and locally satisfies the $\alpha$-PL condition for a neighborhood $N^{\text{PL}}$ and $\alpha>1$, then there exists a neighborhood $N^{\text{G}}\subset N^{\text{PL}}$ such that:
    \begin{equation}
    \label{eq:growth}
    \forall x \in N^{\text{G}}: f(x)-\min_{x'}f(x')\geq \left(\frac{\alpha-1}{\alpha}\right) ^{\frac{\alpha}{\alpha-1}} \mu^{\frac{1}{\alpha-1}} d(x,P)^\frac{\alpha}{\alpha-1},
    \end{equation}
    where $P=\{x:x\in \arg\min f(x)\}$ the set of minimizers of $f$.
    Therefore, $f$ locally has $\frac{\alpha}{\alpha-1}$-growth on the neighborhood $N^{\text{G}}$. If $f$ satisfies the $\alpha$-PL condition globally, then it also satisfies $\frac{\alpha}{\alpha-1}$-growth globally.
\end{lemma}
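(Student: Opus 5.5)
The plan is to run a gradient-flow (or gradient-descent) argument inside the neighborhood where the $\alpha$-PL inequality holds, in the spirit of the classical Karimi--Nutini--Schmidt proof that PL implies quadratic growth. The proof will also be set up to cover the global case. Write $f^*=\min f$ and $\Delta(x):=f(x)-f^*\ge 0$. Set $\beta:=\frac{\alpha-1}{\alpha}\in(0,1)$ and define the auxiliary function $g(x):=\Delta(x)^{\beta}$. Away from $P$, where $\Delta>0$, the chain rule and the $\alpha$-PL inequality $\|\nabla f\|\ge \mu^{1/\alpha}\Delta^{1/\alpha}$ give
\begin{equation*}
\|\nabla g(x)\| = \beta\,\Delta(x)^{\beta-1}\|\nabla f(x)\| \ge \beta\,\mu^{1/\alpha}\,\Delta(x)^{\beta-1+1/\alpha}=\beta\,\mu^{1/\alpha},
\end{equation*}
since $\beta-1+1/\alpha=0$. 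So $g$ has gradient norm bounded below by the constant $\beta\mu^{1/\alpha}$ wherever $x\notin P$ lies in $N^{\text{PL}}$.

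Next, take $x\in N^{\text{G}}\setminus P$ and follow the normalized descent curve $\dot z=-\nabla f(z)/\|\nabla f(z)\|$, started at $z(0)=x$ and parametrized by arclength. Along it $\frac{d}{dt}g(z(t))=-\|\nabla g(z(t))\|\le-\beta\mu^{1/\alpha}$. Hence the curve reaches $\{g=0\}=P$ within arclength $T\le g(x)/(\beta\mu^{1/\alpha})$, provided it stays in $N^{\text{PL}}\setminus P$ until then. (If $f$ is only $\mathcal{C}^1$ the ODE need not have unique solutions, but Peano existence suffices.) Its endpoint $z(T)$ lies in $P$, which gives
\begin{equation*}
d(x,P)\le T\le \frac{\Delta(x)^{\beta}}{\beta\,\mu^{1/\alpha}}.
\end{equation*}
Raising both sides to the power $1/\beta=\frac{\alpha}{\alpha-1}$ gives $\Delta(x)\ge \beta^{\frac{\alpha}{\alpha-1}}\mu^{\frac{1}{\alpha-1}}d(x,P)^{\frac{\alpha}{\alpha-1}}$, which is exactly \eqref{eq:growth}. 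To make the limit argument rigorous, note that $g(z(t))$ is decreasing and bounded below, so the curve has finite length and a limit point $z_\infty$. At $z_\infty$ either $\Delta=0$, or the flow would continue with $g$ still decreasing. The fact that $P$ is closed, by continuity of $f$, gives $z_\infty\in P$. For points of $P$ themselves the inequality is trivial.

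The main obstacle is choosing $N^{\text{G}}$ so that trajectories started in it never leave $N^{\text{PL}}$. My plan is to use sublevel-type control. For each $\bar x\in P$, pick $r>0$ with $\B_{2r}(\bar x)\subset N^{\text{PL}}$, and let $N^{\text{G}}$ be the union over $\bar x\in P$ of the sets $\{x\in\B_r(\bar x): \Delta(x)^{\beta}<\beta\mu^{1/\alpha} r\}$. These sets are open by continuity and contain $\bar x$. A trajectory from such an $x$ has length at most $g(x)/(\beta\mu^{1/\alpha})<r$, so it stays in $\B_{2r}(\bar x)\subset N^{\text{PL}}$. A continuation (first-exit-time) argument makes this precise: if the curve left $\B_{2r}(\bar x)$, then up to the exit time it would already have length $\ge r$, contradicting the length bound, which holds up to that time. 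In the global case $N^{\text{PL}}=\R^n$, there is no exit issue and $N^{\text{G}}=\R^n$. Invexity enters only to guarantee that the only stationary points are in $P$; the PL inequality already shows that $\nabla f\ne0$ off $P$ in $N^{\text{PL}}$, so the normalized flow is well defined.
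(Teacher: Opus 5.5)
Your argument is correct and is essentially the paper's proof. Both use the same auxiliary function $g=(f-f^*)^{(\alpha-1)/\alpha}$, the same lower bound $\|\nabla g\|\ge\frac{\alpha-1}{\alpha}\mu^{1/\alpha}$ off $P$, and the same ``length of the descent curve bounds $d(x,P)$'' step. The paper runs the unnormalized flow $\dot x=-\nabla g(x)$ and uses $\int_0^T\|\nabla g\|^2\,dt\ge\frac{\alpha-1}{\alpha}\mu^{1/\alpha}\int_0^T\|\nabla g\|\,dt$, which is your arclength computation after a time change.

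The one real difference is the choice of $N^{\text{G}}$. The paper takes balls around points of $P$ on which $f<\tfrac12\bigl(f^*+\min_{\partial N^{\text{PL}}}f\bigr)$ and argues the flow cannot climb back to $\partial N^{\text{PL}}$. That tacitly needs $\partial N^{\text{PL}}$ (and $P$) to be compact. Your ball-plus-length-budget construction uses only the openness of $N^{\text{PL}}$, so it is slightly more general.
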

  
\begin{proof}
    This proof generalizes the one from \citet{Karimi2016Lineara} for global 2-PL functions.

    For points in $P$, \Cref{eq:growth} holds by definition, so let us consider $x \notin P$.

    Define, $g(x)=\left( f(x)- \min_{x'}f(x')\right)^{\frac{\alpha-1} {\alpha}}$. As $\alpha>1$, it follows by the local $\alpha$-PL condition that for all $x\in N^{\text{PL}}\setminus P$:

    \begin{equation}
        \label{eq:GPL}
            \begin{aligned}
        \norm{\nabla g(x)}\geq & \frac{\alpha-1}{\alpha} \left( f(x)- \min_{x'}f(x')\right)^{\frac{-1}{\alpha}} \|\nabla f(x)\|\\
        \geq & \frac{\alpha-1}{\alpha} \mu ^ {\frac{1}{\alpha}} 
    \end{aligned}
\end{equation}
    For $x \notin P$, consider the following ODE:
    \begin{align*}
        \frac{dx(t)}{dt}=&-\nabla g(x(t))\\
        x(0)=& x
    \end{align*}

    Note that $g(x(t))$ and therefore $f(x(t))$ is monotonically decreasing as $t$ increases. Recall, that $L_c^-=\{x': f(x')\leq c\}$. It follows that if $x(0) \in L_c^-$ then $x(t)\in L_c^-$ for all $t\geq 0$. We now construct an open neighbourhood $N^G$, such that if $x(0)\in N^G$, then $x(t)\in N^{\text{PL}}$ for all $t\geq 0$. 
    As the boundary $\partial N^{\text{PL}}$ is compact and $f$ is continuous, $f$ attains its minimum on $\partial N^{\text{PL}}$. Let $c$ be this minimum.
    By the continuity of $f$, for each $x_i\in P$, there exists a $\delta_{x_i}$, such that $\forall x\in \B_{\delta_{x_i}}(x_i): f(x) < c'=\frac{\min_x f(x)+c}{2}$. Since $P$ is compact, there exists a finite subcover $\{B_{\delta_{x_1}}(x_1),\dots,B_{\delta_{x_m}}(x_m)\}$ of $P$, whose union we denote by $N^G$. By construction $N^G$ is open and $P \subset N^G \subset L_{c'}^- \subset N^{PL}$. We deduce that for all $x \in N^{\text{G}}$, the trajectory $x(t)$ stays in $N^{\text{PL}}$, where the $\alpha$-PL condition holds. If the $\alpha$-PL condition is satisfied globally, then $N^{\text{G}}$ can be chosen to be the whole space $\R^n$.

    Next, we show that the ODE converges to $P$ in finite time.
    Indeed for any $\tau>0$, such that $x(\tau)\notin P$, we have for all starting points $x(0)=x\in N^{\text{G}}$ that: 
    \begin{align*}
        g(x(0))-g(x(\tau))=& - \int_{x(0)}^{x(\tau)} \langle \nabla g(x), dx \rangle \\
        = & - \int_{0}^{\tau} \langle \nabla g(x(t)), \frac{dx(t)}{dt} \rangle dt \\
        = & \int_0^\tau \norm{\nabla g(x(t))}^2 dt \\
        \geq & \int_0^\tau \left(\frac{\alpha-1}{\alpha}\right)^2 \mu ^ {\frac{2}{\alpha}} dt \\
        =& \tau \left(\frac{\alpha-1}{\alpha}\right)^2 \mu ^ {\frac{2}{\alpha}},
    \end{align*}
    where we use the $\alpha$-PL condition for the inequality.

    Since $g(x(\tau))\geq 0$, it follows that $\tau \leq g(x(0))\left(\frac{\alpha-1}{\alpha}\right)^{-2} \mu ^ {-\frac{2}{\alpha}} $ and therefore there has to exist a finite $T$ such that $x(T)\in P$.

    Note, the length of the path from $x$ to $x(T)$ is bounded from below by $d(x,P)$. Therefore, we have for all $x(0)=x\in N^{\text{G}}$:

    \begin{align*}
        g(x)-g(x(T))=& \int_0^T \norm{\nabla g(x(t))}^2 dt \\
        \geq & \frac{\alpha-1}{\alpha} \mu ^ {\frac{1}{\alpha}} d(x,P).
    \end{align*}
    As $g(x(T))=0$, it follows that for all $x\in N^{\text{G}}$:
    \[
    f(x)-f(x_p)=g(x)^{\frac{\alpha}{\alpha-1}}\geq \left(\frac{\alpha-1}{\alpha}\right)^{\frac{\alpha}{\alpha-1}} \mu^{\frac{1}{\alpha-1}} d(x,P)^\frac{\alpha}{\alpha-1}
    \]

\end{proof}

\section{Connectedness under Local Inner Hölder Continuity of Best Response Maps} 
\label{app:Holder}
Below, we give a second set of conditions on $\BR_x(y),\BR_y(x)$ and $f$---complementary to \Cref{thm:lilc}---
which guarantee $E=\underline{M}=\overline{M}$ and thus the connectedness of this common set. In \Cref{prop:EB,prop:PLconnected}, we assumed in addition to $f$ being continuously differentiable that the partial derivatives $\nabla_y f(\cdot,y)$ and $\nabla_x f(x,\cdot)$ are Lipschitz continuous in one variable. If we strengthen this assumption slightly to the total derivative $\nabla f$ being locally Lipschitz continuous in $x$ and $y$ on $\underline{M} \cup \overline{M}$, we can relax the assumption on the best response maps from local inner Lipschitz continuity to local inner Hölder continuity. 

\begin{definition}[Local Inner Hölder Continuity]
    \label{def:localholder}
    A set-valued map $S:\R^{n_x} \rightrightarrows \R^{n_y}$ is locally inner $\alpha$-Hölder continuous at $\overline{x}$ for $\overline{y}$, if $\overline{y}\in S(\overline{x})$ and there exist a constant $\kappa$ and an open neighborhood $N_{\overline{x}}$ of $\overline{x}$ such that:
    \[
    \forall x \in N_{\overline{x}}: d(\overline{y},S(x)) \leq \kappa \|x-\overline{x}\|^\alpha,
    \]
    
\end{definition}

In \Cref{thm:lilc}, when $f$ was only assumed to be continuously differentiable, the best response maps $\BR_y(x)$ and $\BR_x(y)$ had to be \lilc\ to control the error term in the first order approximation of $f$. If $f$ has Lipschitz gradients around $\underline{M}\cup\overline{M}$, we can leverage the fundamental theorem of calculus to get a more precise bound on the error term, such that even if the best response maps are only locally inner $\alpha$-Hölder continuous, we can still derive an envelope theorem for $F$ and $G$. It shows that they are differentiable and that for $x\in X:\nabla F(x)=\nabla_x f(x,y^*)$ for any $y^*\in \BR_y(x)$ and vice versa for $G$. It thus follows that for any $(x,y)\in \underline{M}$, it holds that $\nabla_x f(x,y)=\nabla F(x)=0$ and $(x,y)\in E$. Repeating the same argument for $\overline{M}$, we arrive at $E=\underline{M}=\overline{M}$ and the connectedness of this common set.

\begin{theorem}
\label{thm:Holderconnected}
Assume $f$ is continuously differentiable and that $\nabla f$ is locally Lipschitz continuous on $\underline{M}\cup\overline{M}$. Furthermore, assume that for all $(\overline{x},\overline{y})\in \underline{M}$, $\BR_y(x)$ is locally inner $\alpha_x$-Hölder continuous at $\overline{x}$ for $\overline{y}$ with $\alpha_x>0.5$ and conversely for all $(\overline{x},\overline{y})\in \overline{M}$, $\BR_x(y)$ is locally inner $\alpha_y$-Hölder continuous at $\overline{y}$ for $\overline{x}$ with $\alpha_y>0.5$. Then $E=\underline{M}=\overline{M}$ and this common set is connected.
\end{theorem}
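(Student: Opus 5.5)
The plan follows the template of the proof of \Cref{thm:lilc}. It suffices to show $\underline{M}\subseteq E$; the inclusion $\overline{M}\subseteq E$ follows symmetrically with the roles of $x$ and $y$ exchanged. Together with $E\subseteq\underline{M}\cap\overline{M}$ from \Cref{def:solution}, this gives $E=\underline{M}=\overline{M}$, and \Cref{prop:stackelberg} then yields connectedness (note $E\neq\emptyset$ whenever $\underline{M}\neq\emptyset$, and otherwise everything is empty and trivially connected).

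Fix $(\overline{x},\overline{y})\in\underline{M}$. Since $\overline{y}\in\BR_y(\overline{x})$ and $f(\overline{x},\cdot)$ is incave and differentiable, we have $\nabla_y f(\overline{x},\overline{y})=0$. Set $g:=\nabla_x f(\overline{x},\overline{y})$ and suppose $g\neq 0$. Define $x_\delta:=\overline{x}-\delta g$ for small $\delta>0$, so that $x_\delta$ lies in the Hölder neighborhood $N_{\overline{x}}$. Since $\BR_y(x_\delta)$ is nonempty and compact by \Cref{thm:main} and decrease at infinity, we can choose $y_\delta\in\BR_y(x_\delta)$ with
\[
\|y_\delta-\overline{y}\|=d(\overline{y},\BR_y(x_\delta))\leq\kappa\delta^{\alpha_x}\|g\|^{\alpha_x}.
\]
Then $F(x_\delta)=f(x_\delta,y_\delta)$ and $F(\overline{x})=f(\overline{x},\overline{y})$.

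The key step is a second-order Taylor bound in place of the $o(\cdot)$ argument. Let $L$ be the Lipschitz constant of $\nabla f$ on a ball around $(\overline{x},\overline{y})$; this is where the local Lipschitz assumption on $\underline{M}\cup\overline{M}$ is used. For small $\delta$, the segment between $(\overline{x},\overline{y})$ and $(x_\delta,y_\delta)$ lies in that ball, so the fundamental theorem of calculus gives
\[
f(x_\delta,y_\delta)\leq f(\overline{x},\overline{y})+g^T(x_\delta-\overline{x})+\nabla_y f(\overline{x},\overline{y})^T(y_\delta-\overline{y})+\tfrac{L}{2}\bigl(\|x_\delta-\overline{x}\|^2+\|y_\delta-\overline{y}\|^2\bigr).
\]
The $y$-gradient term vanishes, so
\[
F(x_\delta)\leq F(\overline{x})-\delta\|g\|^2+\tfrac{L}{2}\delta^2\|g\|^2+\tfrac{L}{2}\kappa^2\delta^{2\alpha_x}\|g\|^{2\alpha_x}.
\]
Because $2\alpha_x>1$, both error terms are $o(\delta)$, and hence $F(x_\delta)<F(\overline{x})$ for small $\delta$. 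This contradicts $\overline{x}\in X$, so $g=0$. Thus $(\overline{x},\overline{y})$ is stationary, and by invexity in $x$ and incavity in $y$ it is a saddle point, i.e.\ it lies in $E$.

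The main obstacle is justifying that the segment stays in the region where $\nabla f$ is $L$-Lipschitz. The hypothesis only gives local Lipschitzness at points of $\underline{M}\cup\overline{M}$, so I would read it as Lipschitzness on some neighborhood of each such point. Then I need $y_\delta\to\overline{y}$, which the Hölder bound supplies directly. The other point to check is that this is exactly where $\alpha_x>1/2$ is needed: the cross contribution is quadratic in $\|y_\delta-\overline{y}\|$ precisely because $\nabla_y f(\overline{x},\overline{y})=0$ kills the linear $y$-term, and an envelope-type argument for $F$ would give the same conclusion.
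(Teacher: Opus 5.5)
Your proposal is correct and follows essentially the same argument as the paper: take the best response $y_\delta$ nearest to $\overline{y}$ from the Hölder bound, bound the Taylor remainder by $\mathcal{O}(\delta^2+\delta^{2\alpha_x})$ using the local Lipschitz continuity of $\nabla f$ (the linear $y$-term vanishes since $\nabla_y f(\overline{x},\overline{y})=0$), and use $2\alpha_x>1$. The only difference is cosmetic: the paper phrases the conclusion as an envelope theorem ($F$ is differentiable at $\overline{x}$ with $\nabla F(\overline{x})=\nabla_x f(\overline{x},\overline{y})$, which must vanish at a minimizer), whereas you argue by contradiction along the single descent direction $-\nabla_x f(\overline{x},\overline{y})$, as in the proof of \Cref{thm:lilc}, which needs only a one-sided bound.
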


\begin{proof}
    Analogous to the proof of \Cref{thm:lilc}, we restrict to showing that the local inner Hölder continuity of $\BR_y(x)$ implies that $\underline{M} \subseteq E$. The proof for $\overline{M}$ and thus the connectedness of $E$ follows analogously.

    Let $\overline{x}\in X$ and choose any $\overline{y}\in \BR_y(\overline{x})$. By the local inner Hölder continuity of $\BR_y(x)$, there exists a constant $\kappa_x$ and an open neighborhood $N_{\overline{x}}$ on which $\BR_y(x)$ is inner Hölder continuous. Let $\delta'$ be small enough, such that for all $\delta\leq \delta': \mathbb{B}_\delta (\overline{x})\subseteq N_{\overline{x}}$, where $\mathbb{B}_\delta (\overline{x})$ is an open ball of radius $\delta$ around $\overline{x}$, and let $v \in \R^{n_x}$ be a vector such that $\|v\|=1$. We choose a $y(\delta,v) \in \BR_y(\overline{x}+\delta v)$, such that $\|y(\delta,v)-\overline{y}\|=d(\overline{y},\BR_y(\overline{x}+\delta v))$. The latter is possible as $\BR_y(\overline{x}+\delta v)$ is compact.

    By the local inner Hölder continuity of $\BR_y({x})$ at $\overline{x}$ for $\overline{y}$, it holds that:
    \begin{align*}
       \|y(\delta,v)-\overline{y}\| =d(\overline{y},\BR_y(\overline{x}+\delta v)) \leq \kappa_x \|\delta v\|^{\alpha_x} = \kappa_x \delta^{\alpha_x}.
    \end{align*}

    Let $N^L$ be the neighborhood of $(\overline{x},\overline{y})$ on which $\nabla f$ is Lipschitz continuous, then if necessary we can choose a smaller $\delta'$, to ensure that for all $\delta\leq \delta': \mathbb{B}_\delta (\overline{x})\times \mathbb{B}_\delta (\overline{y})\subseteq N^L$.

    Using that $f$ is $\cC^1$, we can derive the following expression for the change of $F$ along $v$:
    \begin{align*}
        F(\overline{x}+\delta v) - F(x) =& f(\overline{x} +\delta v,y(\delta,v)) - f(\overline{x},\overline{y}) \\
        =& \nabla_x f(\overline{x},\overline{y})^T \delta v + \underbrace{\nabla_y f(\overline{x},\overline{y})^T}_{=0} (y(\delta,v)-\overline{y}) + R_1(\delta,v)\\
        =& \nabla_x f(\overline{x},\overline{y})^T \delta v + R_1(\delta,v).
    \end{align*}

    By the fundamental theorem of calculus, the remainder term $R_1(\delta,v)$ is given by:
    \begin{align*}
        R_1(\delta,v) =& \int_0^1  \left(\nabla f(\overline{x}+t \delta v,\overline{y}+t(y(\delta,v)-\overline{y}))-\nabla f(\overline{x},\overline{y})\right)^T \left( \delta v, y(\delta,v)-\overline{y}\right) dt 
    \end{align*}

    Using the Lipschitz continuity of $\nabla f$ on $N^L$ and denoting the Lipschitz constant as $S$, we can bound the remainder term:
    \begin{align*}
        \|R_1(\delta,v)\| \leq & \int_0^1 \|\nabla f(\overline{x}+t \delta v,\overline{y}+t(y(\delta,v)-\overline{y}))-\nabla f(\overline{x},\overline{y})\|  \| \left( \delta v, y(\delta,v)-\overline{y}\right) \| dt \\
        \leq & S \int_0^1 t \|\left( \delta v, y(\delta,v)-\overline{y}\right) \|^2 dt \\
        = & \frac{S}{2} \|\left( \delta v, y(\delta,v)-\overline{y}\right) \|^2 \\
        = & \mathcal{O}(\delta^2 + \delta^{2\alpha_x} )\\
    \end{align*}
    Therefore, independent of the choice of $\overline{y}\in \BR_y(\overline{x})$, we have:
    \begin{align*}
        \lim_{\delta \to 0} \frac{F(\overline{x}+\delta v)-F(\overline{x})}{\delta} =& \nabla_x f(\overline{x},\overline{y})^T v + \lim_{\delta \to 0} \mathcal{O}(\delta+\delta^{2\alpha_x-1})\\
    \end{align*}    
    By Assumption $\alpha_x>0.5$, which implies that the extra term vanishes as $\delta \to 0$. Note, that this result holds for all $\overline{y}\in \BR_y(\overline{x})$. Therefore, all directional derivatives exist, independent of the choice of $\overline{y}\in \BR_y(\overline{x})$, and are continuous functions because $f$ is $\cC^1$. $F$ is therefore differentiable at $\overline{x}$ and its gradient is given by:
    \[
    \nabla F(\overline{x}) = \nabla_x f(\overline{x},\overline{y}),
    \]
    for any $\overline{y} \in \BR_y(\overline{x})$.\footnote{Indeed, from our analysis it follows that $\forall y \in \BR_y(\overline{x})\; \forall \overline{y} \in \BR_y(\overline{x}):\nabla_x f(\overline{x},y)=\nabla_x f(\overline{x},\overline{y})$.} 

    To conclude our proof, consider an arbitrary point $(\overline{x},\overline{y})\in \underline{M}$. By definition, $\nabla_y f(\overline{x},\overline{y})=0$. As $\overline{x}\in \arg\min_x F(x)$, it follows that $\nabla F(\overline{x})=0$ and therefore, using our result above, $\nabla_x f(\overline{x},\overline{y})=\nabla F(\overline{x})=0$. Due to the invex-incave structure of $f$, this implies $(\overline{x},\overline{y})\in E$ and thus $\underline{M}\subseteq E$.
\end{proof}

Again, we are interested in which properties of $f$ ensure the local inner Hölder continuity of $\BR_y(x),\BR_x(y)$ and thus the connectedness of $E=\underline{M}=\overline{M}$. 
In \Cref{prop:PLconnected}, we presented the two-sided PL condition, which captures an important class of nonconvex-nonconcave optimization problems. If we assume $f$ to have locally Lipschitz continuous gradients, then we can replace the assumption that $f(x,\cdot)$ and $f(\cdot,y)$ satisfy the $2$-PL condition with the $\alpha$-PL condition for $\alpha>1.5$. 

Before we state our assumption in detail, we introduce the $\alpha$-growth condition.
\begin{definition}[Growth Condition]
    \label{def:localgrowth}
    Let $f: \R^n \to \R$ be a continuously differentiable function. We say that $f$ has $\alpha$-growth, if there exists a constant $\eta>0$ such that:
    \begin{equation*}
        \forall x\in \R^n: (f(x)-\min_x f(x)) \geq \eta d(x,P)^\alpha,
    \end{equation*}
    where $P=\arg\min f(x)$ is the set of minimizers of $f$.
\end{definition}

\Cref{lem:PLgrowth} in \Cref{app:proofs} shows that a function locally satisfying the $\alpha$-PL condition has $\frac{\alpha}{\alpha-1}$-growth on a possibly smaller neighborhood. Therefore, if we assume the $\alpha$-PL condition we do not need to additionally assume $\frac{\alpha}{\alpha-1}$-growth. However, we explicitly state $\frac{\alpha}{\alpha-1}$-growth in our assumption below to emphasize that the neighborhoods where these conditions hold cannot become arbitrarily small, as we vary $x$ and $y$.

\begin{assum}
   \label{assum:localPL}
    Let $f: \R^{n_x}\times \R^{n_y}\to \R$ be continuously differentiable.
    We assume that:
    \begin{enumerate}
      
        \item $\nabla f$ is locally Lipschitz continuous on $\underline{M}\cup\overline{M}$, i.e. for all $(\overline{x},\overline{y})\in \underline{M}\cup \overline{M}$, there exists a neighborhood $N^L$ containing $(\overline{x},\overline{y})$ and a constant $S$, such that $\nabla f$ is $S$-Lipschitz continuous on $N^L$.
        \item $f$ locally satisfies the $\alpha$-PL and $\frac{\alpha}{\alpha-1}$-growth condition around $X\times Y$, i.e. there exist constants $\alpha_x>1.5,\eta_x,\mu_x$, such that for all $\overline{x}\in X$, there exists a neighborhood $N_{\overline{x}}^\text{PLG}$ and constant $\delta_{\overline{x}}^{(y)}$, such that for all $x' \in N_{\overline{x}}^\text{PLG}$ and all $y' \in \B_{\delta_{\overline{x}}^{(y)}}(\BR(x'))$, the following two conditions hold:
        \begin{align*}
            \max_{y} f(x',y) - f(x',y') &\geq \eta_x d(y',\BR(x'))^{\frac{\alpha_x}{\alpha_x-1}},\\
            \|\nabla_y f(x',y')\|^{\alpha_x} &\geq \mu_x \left( \max_{y} f(x',y) - f(x',y')\right)
        \end{align*}
        Analogously, there exist constants $\alpha_y>1.5,\eta_y,\mu_y$, such that for all $\overline{y}\in Y$, there exists a neighborhood $N_{\overline{y}}^\text{PLG}$ and constant $\delta_{\overline{y}}^{(x)}$, such that for all $y' \in N_{\overline{y}}^\text{PLG}$ and all $x' \in \B_{\delta_{\overline{y}}^{(x)}}(\BR(y'))$, it holds that:\(
            f(x',y') - \min_{x} f(x,y') \geq \eta_y d(x',\BR(y'))^{\frac{\alpha_y}{\alpha_y-1}}\) and 
           \( \|\nabla_x f(x',y')\|^{\alpha_y} \geq \mu_y \left( f(x',y') - \min_{x} f(x,y')\right)\).

    \end{enumerate}
\end{assum}

\begin{prop}
    \label{prop:localPLconnected}
    Let $f$ satisfy \Cref{assum:localPL}. Then the map $\BR_y(x)$ is locally inner $(\alpha_x-1)$-Hölder continuous at $\overline{x}$ for $\overline{y}$ for all $(\overline{x},\overline{y})\in \underline{M}$ and conversely $\BR_x(y)$ is locally inner $(\alpha_y-1)$-Hölder continuous at $\overline{y}$ for $\overline{x}$ for all $(\overline{x},\overline{y})\in \overline{M}$. Therefore, $E=\underline{M}=\overline{M}$ and this common set is connected.
\end{prop}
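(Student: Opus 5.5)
The goal is to show that $\BR_y(\cdot)$ is locally inner $(\alpha_x-1)$-Hölder continuous at $\overline{x}$ for $\overline{y}$, for every $(\overline{x},\overline{y})\in\underline{M}$. The analogous statement for $\BR_x(\cdot)$ on $\overline{M}$ is symmetric. Once both hold, \Cref{thm:Holderconnected} applies directly, because $\alpha_x>1.5$ gives $\alpha_x-1>0.5$ (and likewise for $y$), and its Lipschitz hypothesis is item 1 of \Cref{assum:localPL}. The Hölder bound will come from chaining the growth and PL inequalities in \Cref{assum:localPL} with the Lipschitz continuity of $\nabla f$, in the same spirit as the error-bound argument of \Cref{prop:EB}.

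Fix $(\overline{x},\overline{y})\in\underline{M}$. Then $\overline{x}\in X$, $\overline{y}\in\BR_y(\overline{x})$, and by incavity $\nabla_y f(\overline{x},\overline{y})=0$. The first step is to ensure that $\overline{y}$ lies in the region where the PL and growth inequalities hold for nearby $x$, i.e.\ $\overline{y}\in\B_{\delta^{(y)}_{\overline{x}}}(\BR_y(x))$. Since $\overline{y}\in\BR_y(\overline{x})$, this is exactly inner semicontinuity of $\BR_y$ at $\overline{x}$, which \Cref{lem:isc} provides under \Cref{assum:localPL}. So I will shrink to a neighborhood $N_{\overline{x}}\subseteq N^{\text{PLG}}_{\overline{x}}$ on which $d(\overline{y},\BR_y(x))<\delta^{(y)}_{\overline{x}}$, on which $(x,\overline{y})\in N^L$, and on which the segment from $\overline{x}$ to $x$ stays inside $N^L$.

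For $x\in N_{\overline{x}}$, write $\beta=\alpha_x/(\alpha_x-1)$. Combining the growth inequality with the PL inequality at $(x,\overline{y})$ gives
\[
\eta_x\, d(\overline{y},\BR_y(x))^{\beta}\le \max_y f(x,y)-f(x,\overline{y})\le \mu_x^{-1}\|\nabla_y f(x,\overline{y})\|^{\alpha_x}.
\]
The Lipschitz continuity of $\nabla f$ on $N^L$, together with $\nabla_y f(\overline{x},\overline{y})=0$, then bounds
\[
\|\nabla_y f(x,\overline{y})\|\le S\|x-\overline{x}\|.
\]
Putting these together,
\[
d(\overline{y},\BR_y(x))\le C\,\|x-\overline{x}\|^{\alpha_x/\beta}=C\,\|x-\overline{x}\|^{\alpha_x-1},
\]
with $C=(S^{\alpha_x}/(\mu_x\eta_x))^{1/\beta}$, since $\alpha_x/\beta=\alpha_x-1$. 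This is exactly local inner $(\alpha_x-1)$-Hölder continuity. Repeating the argument with the roles of $x$ and $y$ swapped handles $\overline{M}$, and \Cref{thm:Holderconnected} then yields $E=\underline{M}=\overline{M}$ and connectedness of this common set.

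The main obstacle is the first step: guaranteeing that $\overline{y}$ sits inside the $\delta^{(y)}_{\overline{x}}$-tube around $\BR_y(x)$, so that both inequalities may be evaluated at $(x,\overline{y})$ rather than only very close to $\BR_y(x)$. This is where \Cref{lem:isc} is essential. That lemma itself relies on the growth condition together with outer semicontinuity from \Cref{lem:osc}, so I need to check that its application is not circular. It is not: it uses the growth inequality only at points already in the tube, or, in its contradiction argument, along a sequence where the distance stays bounded below by a fixed $\epsilon_1$ which can be chosen smaller than $\delta^{(y)}_{\overline{x}}$. If this proves delicate, the fallback is to choose $\epsilon_1<\delta^{(y)}_{\overline{x}}$ explicitly and rerun the argument of \Cref{lem:isc} inline.
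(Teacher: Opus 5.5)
Your proof is correct and is essentially the paper's own argument. Inner semicontinuity from \Cref{lem:isc} places $\overline{y}$ in the $\delta^{(y)}_{\overline{x}}$-tube around $\BR_y(x)$ for $x$ near $\overline{x}$; the Lipschitz bound $\|\nabla_y f(x,\overline{y})\|\le S\|x-\overline{x}\|$ (using $\nabla_y f(\overline{x},\overline{y})=0$), chained with the PL and growth inequalities at $(x,\overline{y})$, then gives exponent $\alpha_x-1$ with the paper's constant $\kappa$, and \Cref{thm:Holderconnected} concludes.

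One caveat concerns only your side remark on \Cref{lem:isc}. The contradiction hypothesis $d(\overline{y},\BR_y(x_\delta))\ge\epsilon_1$ is a lower bound, so taking $\epsilon_1<\delta^{(y)}_{\overline{x}}$ does not put $\overline{y}$ in the tube where the growth inequality applies; your fallback has the same flaw. A correct repair uses the connectedness of $\{y: f(x_\delta,y)\ge f(x_\delta,\overline{y})\}$ from \Cref{thm:main} to find a point at distance exactly $\min(\epsilon_1,\delta^{(y)}_{\overline{x}}/2)$ from $\BR_y(x_\delta)$, and applies growth there. Since you, like the paper, simply invoke the lemma, your proof of the proposition is unaffected.
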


\begin{proof}
     Connectedness follows from the local Hölder continuity of $\BR_y(x)$ and $\BR_x(y)$ by \Cref{thm:Holderconnected}. We restrict to showing the local inner Hölder continuity of $\BR_y(x)$. The proof for $\BR_x(y)$ follows analogously.
     
     Let $(\overline{x},\overline{y})\in \underline{M}$. From \Cref{lem:isc}, we know that $\BR_y(x)$ is inner semicontinuous at $\overline{x}$, i.e.
     \[
     \forall \epsilon\; \exists \delta>0\; \forall x\in \B_{\delta}(\overline{x}): \BR_y(x)\cap \B_{\epsilon}(\overline{y})\neq \emptyset,
     \]
     which is equivalent to:
        \[
        \forall \epsilon\; \exists \delta>0\; \forall x\in \B_{\delta}(\overline{x}): d(\overline{y},\BR_y(x))\leq \epsilon.
        \]
    In particular, we conclude that there exists a small enough $\delta>0$ such that (i) $\B_\delta(\overline{x})\subseteq N_{\overline{x}}^\text{PLG}\cap N_{\overline{x}}^\text{L}$, and (ii) for all $ x\in \B_{\delta}(\overline{x}): d(\overline{y},\BR_y(x))\leq \delta_{\overline{x}}^{(y)}$, where $\delta_{\overline{x}}^{(y)}$ is the constant from \Cref{assum:localPL} and $N_{\overline{x}}^\text{L}$ is the neighborhood where $\nabla f$ is Lipschitz continuous with constant $S$. Because, we can ensure that $d(\overline{y},\BR_y(x))\leq  \delta_{\overline{x}}^{(y)} $, we can apply the local $\alpha_x$-PL and $\frac{\alpha_x}{\alpha_x-1}$-growth condition to obtain the following statement for all $x\in \B_{\delta}(\overline{x})$:
    \begin{align*}
        S^{\alpha_x}\|x - \bar{x}\|^{\alpha_x} \ \ &\underset{\mathclap{\text{(Lip.)}}}{\geq}\ \  \ \|\nabla_y f(x,\bar{y}) - \nabla_y f(\bar{x},\bar{y})\|^{\alpha_x}\\
        &\underset{\mathclap{\text{(invex)}}}{=} \ \ \ \|\nabla_y f(x,\bar{y})\|^{\alpha_x}\\
        &\underset{\mathclap{\text{(PL)}}}{\geq}  \ \ \  \mu_{x} \left(\max_{y \in Y} f(x,y) - f(x,\bar{y})\right)\\
        &\underset{\mathclap{\text{(growth)}}}{\geq} \ \  \ \eta_x \mu_{x} d(\bar{y}, \BR_y(x))^{\frac{\alpha_x}{\alpha_x-1}}.
    \end{align*}
    Rearranging the terms yields the claimed Hölder continuity: 
    \[
    \forall x \in \B_{\delta}(\overline{x}): d(\overline{y},\BR_y(x)) \leq \kappa \|x-\overline{x}\|^{\alpha_x-1},
    \]
    for $\kappa = \left(S^{\alpha_x-1} \eta_x^{\frac{1-\alpha_x}{\alpha_x}}\mu_x^{\frac{1-\alpha_x}{\alpha_x}}\right)$.
\end{proof}

\begin{example}
    \label{ex:localPLconnected}
    For $x,y \in \R$, define $a = \max\left(|x| - 1, 0\right)$ and $b = \max\left(|y| - 1, 0\right)$ and consider the function 
    \[
        f(x,y)= a^2 \exp\left(-b^2\right) - b^2,
    \]
    plotted in \Cref{fig:3d_plot_e}. This function is nonconcave in $y$, but satisfies the assumptions of \Cref{prop:localPLconnected} (verified in \Cref{app:proofs}). Therefore $E=\underline{M}=\overline{M}$ and this common set is connected. A direct computation shows $E=[-1,1]\times[-1,1]$.

\end{example}

\begin{figure}
    \centering
    \includegraphics[width=0.75\textwidth,trim=0 0.6cm 0 1.8cm,clip]{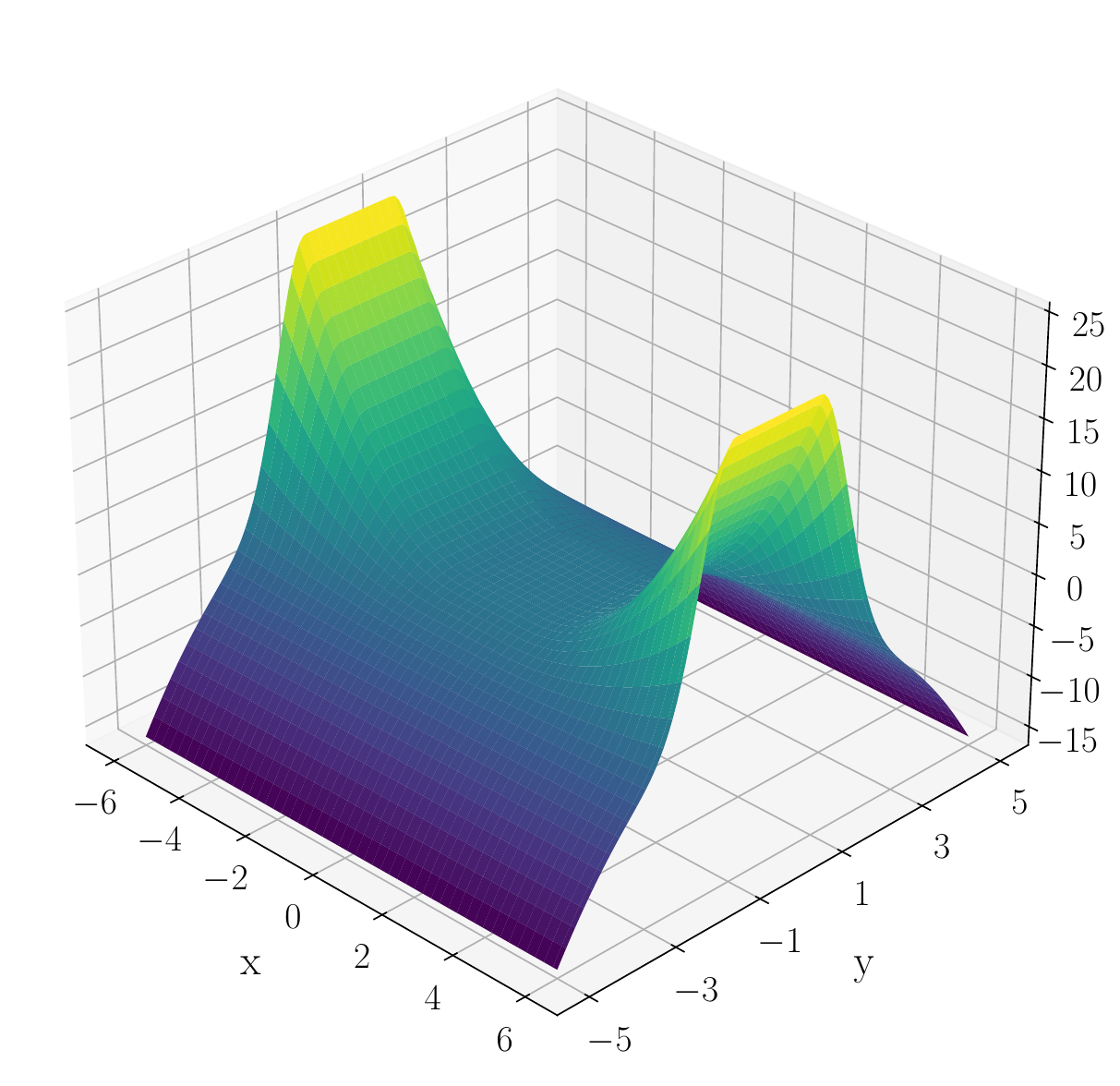}
    \caption{Plot of the function $ f(x,y)= a^2 \exp\left(-b^2\right) - b^2$ with $E=[-1,1]\times[-1,1]$, from \Cref{ex:localPLconnected}. }
    \label{fig:3d_plot_e}
\end{figure}